\numberwithin{equation}{section}
\theoremstyle{definition}
\newtheorem{thm}{Theorem}[section]
\newtheorem{prop}[thm]{Proposition}
\newtheorem{lem}[thm]{Lemma}
\newtheorem{rem}[thm]{Remark}
\newtheorem{conj}[thm]{Conjecture}
\newcommand{\wt}{\mathrm{wt}}
\def\cal#1{\text{$\mathcal{#1}$}}
\def\lie#1{\text{$\mathfrak{#1}$}}
\newcommand{\e}{\quad \text{ and } \quad}
\newcommand{\ev}{\mathrm{ev}}
\newcommand{\ch}{\text{ch}}
\begin{document}

\title[Representations of hyper multicurrent and multiloop algebras]{Finite-dimensional representations of\\ hyper multicurrent and multiloop algebras}  
\author{Angelo Bianchi} 
\address{Universidade Federal de S\~ao Paulo - UNIFESP - Department of Science and Technology, Brazil}
\email{acbianchi@unifesp.br}
\thanks{A.B. is partially supported by CNPq grant 462315/2014-2 and FAPESP grants 2015/22040-0 and 2014/09310-5.}

\author{Samuel Chamberlin}
\address{Department of Mathematics and Statistics\\
Park University\\
Parkville, MO 64152}
\email{samuel.chamberlin@park.edu}


\ 

\begin{abstract}
We investigate the categories of finite-dimensional representations of multicurrent and multiloop hyperalgebras in positive characteristic, i.e., the hyperalgebras associated to the multicurrent algebras $\lie g\otimes\mathbb{C}[t_1,\ldots,t_n]$ and to the multiloop algebras $\lie g\otimes\mathbb{C}[t_1^{\pm1},\ldots,t_n^{\pm 1}]$, where $\lie g$ is any finite-dimensional complex simple Lie algebra. The main results are the construction of the universal finite-dimensional highest-weight modules and a classification of the irreducible modules in each category. In the characteristic zero setting we also provide a relationship between these modules.
\end{abstract}

\maketitle

\section*{Introduction}

The category of level zero representations of affine and quantum affine algebras and its full subcategories of finite-dimensional representations have been extensively studied recently. In the representation theory of such algebras an important family of representations of the loop and current algebras are the universal finite-dimensional highest weight modules nowadays called \textit{local Weyl modules}. These modules were introduced by Chari and Pressley in \cite{CPweyl} and became objects of independent interest (cf. \cite{C,cfk,CG,CL,CPweyl,FL,fkks,fl07,JM2,nss2} and references therein). We highlight two of these papers: in \cite{FL} the authors considered (local and global) Weyl modules for certain \textit{map Lie algebras} extending partial results from \cite{CPweyl}; and in \cite{cfk} the authors provided a categorical approach to (local and global) Weyl modules in the context of more general map algebras.

During the last decade, there has been intense study of the finite-dimensional representation theory of such map algebras, which have the form $\mathfrak{g}\otimes_{\mathbb C} \mathcal{A}$, where $\mathfrak{g}$ is a finite-dimensional complex simple Lie algebra and $\mathcal{A}$ is a commutative, associative, unital $\mathbb C$-algebra. These algebras can be regarded as generalizations of the (centerless untwisted) affine Kac-Moody algebras, which are recovered in the particular case that $\mathcal{A}=\mathbb C[t,t^{-1}]$. 
In addition, the multicurrent and multiloop algebras (where $\mathcal A=\mathbb{C}[t_1,\ldots,t_n]$ or $\mathcal A=\mathbb{C}[t_1^{\pm1},\ldots,t_n^{\pm 1}]$ respectively) are mutually related, since the multicurrent algebras can be seen as subalgebras of the multiloop algebras, which are essential to the construction of the toroidal Lie algebras (see \cite{PB,K2,L,R} and references therein).

In order to extend some results of the aforementioned papers to arbitrary algebraically closed fields $\mathbb F$ we need \textit{hyperalgebras}, which are constructed by considering an \textit{integral form} of the universal enveloping algebra $U(\lie g\otimes\mathcal{A})$, and then tensoring this form over $\mathbb Z$ with $\mathbb F$. Given a Lie algebra $\lie a$ over $\mathbb C$, the corresponding hyperalgebra is denoted by $U_\mathbb F(\lie a)$. In the case of the finite-dimensional complex simple Lie algebra $\lie g$, one considers Konstant's integral form for $U(\lie g)$ \cite{kosagz}. The affine analogues of Kostant's form were obtained by Garland \cite{G} for the non-twisted affine Kac-Moody algebras and by Mitzman \cite{mitz} in a more general way for all affine Kac-Moody algebras. Integral forms for (the universal enveloping algebra of) general map algebras $U(\lie g\otimes\mathcal{A})$ were formulated by the second author in \cite{sam1}.

The motivation to consider hyperalgebras comes from the  following fact: if $\lie g$ is a semisimple finite-dimensional Lie algebra over the complex numbers, $\mathbb F$ is an algebraically closed field, and $G_\mathbb F$ is a connected, simply connected, semisimple algebraic group over $\mathbb F$ of the same Lie type as $\lie g$, then the category of finite-dimensional $G_\mathbb F$-modules is equivalent to that of the hyperalgebra $U_\mathbb F(\lie g)$. Further, when the characteristic of $\mathbb F$ is zero, the hyperalgebra coincides with the universal enveloping algebra, as explained in Subsection \ref{s:intform}.

The finite-dimensional representations of the hyperalgebras in the (untwisted) affine case were studied by Jakeli\'c and Moura in \cite{JM1}. In \cite{bm}, the first author and Moura dealt with the twisted affine case. Hyperalgebras in the current algebra case and their connections with the loop algebra case and Demazure theory were studied by the first author, Macedo and Moura in \cite{bmm}.

The goal of this paper is to establish basic results about the finite-dimensional representations of multicurrent and multiloop hyperalgebras extending some of the known results in the case of current and loop algebras ($\lie g\otimes \mathbb{C}[t]$ and $\lie g\otimes \mathbb{C}[t^{\pm1}]$ resposectively). The approach for this is similar to \cite{JM1,bmm}, with the caveat that the characteristic zero methods as in \cite{cfk} are not available for the hyperalgebra setting.  We denote the multicurrent algebra in $n$ variables by $\lie g[n]$ and the multiloop algebra by $\lie g\langle n\rangle$. Our main results are the construction of the universal finite-dimensional highest-weight modules in the categories of finite-dimensional $U_\mathbb{F}(\lie g[n])$-modules and $U_\mathbb{F}(\lie g\langle n\rangle)$-modules, which we call \textit{graded local Weyl modules} and \textit{local Weyl modules}, respectively. We also provide a classification of the irreducible modules in each of these categories. 

Section \ref{pre} is dedicated to the algebraic preliminaries, including a review of the relevant facts about integral forms and hyperalgebras.   
In Section \ref{modules} we focus on categories of certain finite-dimensional modules. In Subsection \ref{sec:rev.g} we review the finite dimensional representation theory of hyperalgebras of the form $U_\mathbb{F}(\lie g)$. In Subsection \ref{sec:rev.gloop} we introduce the category of finite dimensional $U_\mathbb F(\lie g\langle n\rangle)$-modules, introduce the $n$-Drinfeld polynomials, define the local Weyl modules, and, in Theorem \ref{mainthm2}, we prove that they are the universal highest weight objects in this category and classify the irreducible objects. In Subsection \ref{ssfdirreps} we present a classification of the irreducible $U_{\mathbb F}(\lie g[n])$-modules similarly to \cite{CG}, define the graded local Weyl modules and, in Theorem \ref{mainthm}, we prove that they are the universal highest weight objects in this category. In Subsection \ref{iso} we discuss a conjecture (Conjecture \ref{conjec}) on a possible isomorphism between local and graded local Weyl modules and establish that in the characteristic zero setting there is a relationship between them as a quotient (Proposition \ref{quotient}). Some of the main proofs are presented separately in Section \ref{mainproofs}.

\ 

\noindent
\textbf{Acknowledgment.}  The authors are thankful to Dr. Adriano Moura for discussions about Drinfeld polynomials. 
\tableofcontents

\section{Preliminaries}\label{pre}

Throughout this work we denote by $\mathbb C$, $\mathbb Z$, $\mathbb Z_+$ and $\mathbb N$ the sets of complex numbers, integers, nonnegative integers and positive integers, respectively.

\subsection{Simple Lie algebras}\label{s:simple}

Let $\lie g$ be a finite-dimensional complex simple Lie algebra. Fix a Cartan subalgebra $\lie h$ of $\lie g$ and let $R$ denote the corresponding set of roots. Let $I$ be the set of vertices of the Dynkin diagram associated to $\lie g$. Let $\{\alpha_i:{i\in I}\}$ (respectively $\{ \omega_i :{i\in I}\}$) denote the simple roots (respectively fundamental weights) and set $Q=\bigoplus_{i\in I} \mathbb Z \alpha_i$, $Q^+=\bigoplus_{i\in I} \mathbb Z_+ \alpha_i$, $P=\bigoplus_{i\in I} \mathbb Z \omega_i$, $P^+=\bigoplus_{i\in I} \mathbb Z_+ \omega_i$, and $R^+=R\cap Q^+$.

Let $\mathcal{C}:=\{x_\alpha^{\pm},h_i:\alpha\in R^+,\ i\in I\}$ be a Chevalley basis of $\lie g$ and set
$x_i^\pm=x_{\alpha_i}^{\pm}$, $h_\alpha=[x_\alpha^+,x_\alpha^-]$, and $h_i=h_{\alpha_i}$. For each $\alpha\in R^+$, the subalgebra of $\lie g$ spanned by $\{x_{\alpha}^\pm,h_\alpha\}$ is isomorphic to $\lie{sl}_2$. 

Set
$$\lie n^\pm=\bigoplus_{\alpha\in R^+}\mathbb{C} x_\alpha^\pm,$$
and note that $\lie g=\lie n^-\oplus \lie h\oplus \lie n^+.$ 
We denote the Weyl group by $\cal W$, its longest element by $w_0$, and the highest root by $\theta$.

\subsection{Multicurrent and multiloop algebras} 

Let $\mathbb{C}[n]:=\mathbb{C}[t_1,\ldots,t_n]$ be the polynomial ring in the variables $t_1,\dots,t_n$ and let $\mathbb{C}\langle n\rangle:=\mathbb{C}\left[t_1^{\pm1},\ldots,t_n^{\pm1}\right]$ be the Laurent polynomial ring also in the variables $t_1,\dots,t_n$. We define the \textit{multicurrent algebra} associated to $\lie g$ as $\lie g[n]:=\lie g\otimes\mathbb C[n]$ and the \textit{multiloop algebra} as 
$\lie g\langle n\rangle:=\lie g\otimes\mathbb C\langle n\rangle$, where the bracket is given by $[x\otimes f, y\otimes g] =[x,y]\otimes fg$ for $x,y\in\lie g$ and $f,g\in\mathbb C\langle n\rangle$. Notice that $\lie g\otimes 1$ is a subalgebra of both $\lie g[n]$ and $\lie g\langle n\rangle$, which is isomorphic to $\lie g$. If $\lie a$ is a subalgebra of $\lie g$, then $\lie a[n]:=\lie a\otimes \mathbb C[n]$ and $\lie a\langle n\rangle:=\lie a\otimes \mathbb C\langle n\rangle$ are naturally subalgebras of $\lie g[n]$ and $\lie g\langle n\rangle$ respectively. In particular, we have 
$$\lie g[n] = \lie n^-[n] \oplus \lie h[n] \oplus \lie n^+[n]
\e \lie g\langle n\rangle=\lie n^-\langle n\rangle\oplus\lie h\langle n\rangle\oplus\lie n^+\langle n\rangle$$
and $\lie h[n]$ and $\lie h\langle n\rangle$ are abelian subalgebras of $\lie g[n]$ and $\lie g\langle n\rangle$ respectively.

We set 
$$\mathbb{C}[n]_0:=\{f\in \mathbb{C}[n] \mid f(0,\dots,0)=0\} \qquad \text{ and } \qquad \mathbb{C}[n]_+:=\mathbb{C}[n]\setminus \mathbb C.$$
Then we define
$$\lie{h}[n]_0:=\lie h\otimes \mathbb{C}[n]_0\subset \lie{h}[n]\qquad \text{ and } \qquad \lie{g}[n]_+:=\lie g\otimes \mathbb{C}[n]_+\subset \lie{g}[n]$$

\subsection{Universal enveloping algebras} 

For a Lie algebra $\lie a$, we denote by $U(\lie a)$ the corresponding universal enveloping algebra of $\lie a$. The Poincar\'e--Birkhoff--Witt (PBW) Theorem implies the isomorphisms
\begin{align*}
U(\lie g)&\cong U(\lie n^-)\otimes U(\lie h)\otimes U(\lie n^+)\\
U(\lie g[n])&\cong U(\lie n^-[n])\otimes U(\lie h[n])\otimes U(\lie n^+[n])\\
U(\lie g\langle n\rangle)&\cong U(\lie n^-\langle n\rangle)\otimes U(\lie h\langle n\rangle)\otimes U(\lie n^+\langle n\rangle).
\end{align*}

The assignments $\Delta:\lie g\langle n\rangle\to U(\lie g\langle n\rangle)\otimes U(\lie g\langle n\rangle)$ where $x\mapsto x\otimes 1+1\otimes x$, $S:\lie g\langle n\rangle\to \lie g\langle n\rangle$ where $x\mapsto-x$, and $\epsilon:\lie g\langle n\rangle\to\mathbb C$ where $x\mapsto0$, can be  uniquely extended so that $U(\lie g\langle n\rangle)$ becomes a Hopf algebra with comultiplication $\Delta$, antipode $S$, and counit $\epsilon$.

Notice that the restriction of these maps to $\lie g[n]$ also induces a Hopf algebra structure to $U(\lie g[n])$. For any Hopf algebra $H$, we denote by $H^0$ its augmentation ideal (the kernel of $\epsilon$).

\subsection{Integral forms and hyper algebras} \label{s:intform}

Given $\alpha\in R$ and $f\in \mathbb{C}\langle n\rangle\setminus\{0\}$, consider the following power series with coefficients in $U(\lie h\langle n\rangle)$:
\begin{equation*}  
	\Lambda_{\alpha,f}(u) = \sum_{r=0}^\infty \Lambda_{\alpha,f,r}u^r = \exp \left(-\sum_{s=1}^{\infty}\frac{h_{\alpha}\otimes f^{s}}{s} u^{s}\right).
\end{equation*}
Note that $\Lambda_{\alpha,f,r}$ is a polynomial in $\left(h_\alpha\otimes f^j\right)$ for $j\in\{1,\ldots,r\}$. For $i\in I$, we shall write $\Lambda_{i,f,r}$ in place of $\Lambda_{\alpha_i,f,r}$. 

\begin{rem} This series is a natural extension from the series introduced in \cite{G}.
\end{rem}

Let $$\mathbb{B}[n]:=\left\{t_1^{r_1}\cdots t_n^{r_n}\ \big|\ (r_1,\ldots,r_n)\in\mathbb{Z}_+^n\right\} \qquad \text{and} \qquad \mathbb{B}\langle n\rangle:=\left\{t_1^{r_1}\cdots t_n^{r_n}\ \big|\ (r_1,\ldots,r_n)\in\mathbb{Z}^n\right\}$$
be the canonical basis for $\mathbb{C}[n]$ and $\mathbb{C}\langle n\rangle$ respectively. 

 Given $\mathbf s=\left(s_1,\ldots,s_n\right)\in\mathbb Z^n$ and $\alpha\in R^+$ we shall denote  
 $\mathbf t^{\mathbf s}:=t_1^{s_1}\cdots t_n^{s_n}\in\mathbb C\langle n\rangle$ and $\Lambda_{i,\mathbf s,r}:=\Lambda_{i,\mathbf t^\mathbf s,r}, \ x_{\alpha,\mathbf s}^-:=x_{\alpha}^-\otimes\mathbf t^{\mathbf s}\in\lie g\langle n\rangle$. Similarly for elements in $\lie{g}[n]$.

Define $$\mathbb{B}^{\bullet}:=\left\{t_1^{r_1}\cdots t_n^{r_n}\ |\ (r_1,\ldots,r_n)\in\mathbb{Z}^n,\ \gcd(r_1,\ldots,r_n)=1\right\}$$ $$\text{and} \qquad  \mathbb{B}^{\bullet}_+:=\left\{t_1^{r_1}\cdots t_n^{r_n}\ |\ (r_1,\ldots,r_n)\in\mathbb{Z}_+^n,\ \gcd(r_1,\ldots,r_n)=1\right\}.$$ 
Note that $\mathbb{B}^\bullet$ is the set of basis elements for $\mathbb{C}\langle n\rangle$, which are not powers of other basis elements. Similarly for $\mathbb{B}^{\bullet}_+$ and $\mathbb{C}[n]$. 

The following lemma shows that elements of the form $\Lambda_{\alpha,f^k,r}$ are linear combinations of products of elements of the form $\Lambda_{\alpha,f,s}$.
\begin{lem}\label{f^kintermsoff}
Let $f\in\mathbb{B}^\bullet$, $\alpha\in R$, and $r,k\in\mathbb N$. Then
\begin{equation*}
	\Lambda_{\alpha,f^k,r}=k\Lambda_{\alpha,f,kr}+\sum_{(\mathbf s,\mathbf n)}m_{\mathbf s,\mathbf n}\Lambda_{\alpha,f,s_1}^{n_1}\dots\Lambda_{\alpha,f,s_l}^{n_l}
	\end{equation*}
    where $m_{\mathbf s,\mathbf n}\in\mathbb Z$ and the sum is over all $\mathbf s,\mathbf n\in\mathbb N^l$ for some $l\in\mathbb N$ such that $s_i\neq s_j$ for all $i\ne j,$ $l\sum n_j>1$, and $\sum n_js_j=rk$.  
\end{lem}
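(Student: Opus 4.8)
The plan is to collapse the statement into a one-variable power series identity and then read off a single coefficient. Write $a_s:=h_\alpha\otimes f^s\in\lie h\langle n\rangle$ for $s\ge1$; these all commute, and inside $U(\lie h\langle n\rangle)[[u]]$ we have
\begin{equation*}
\Lambda_{\alpha,f}(u)=\exp\Big(-\sum_{s\ge1}\tfrac{a_s}{s}u^s\Big)\e\Lambda_{\alpha,f^k}(u)=\exp\Big(-\sum_{s\ge1}\tfrac{a_{ks}}{s}u^s\Big).
\end{equation*}
The key step is the substitution identity: if $\zeta$ is a fixed primitive $k$-th root of unity, then
\begin{equation*}
\Lambda_{\alpha,f^k}(v^k)=\prod_{j=0}^{k-1}\Lambda_{\alpha,f}(\zeta^j v)
\end{equation*}
in $U(\lie h\langle n\rangle)[[v]]$. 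This is immediate from the definitions: the right-hand side equals $\exp\big(-\sum_{t\ge1}\tfrac{a_t}{t}v^t\sum_{j=0}^{k-1}\zeta^{jt}\big)$, and $\sum_{j=0}^{k-1}\zeta^{jt}$ is $k$ when $k\mid t$ and $0$ otherwise, which recovers the left-hand side.

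Next I would extract the coefficient of $v^{rk}$ from both sides. On the left it is $\Lambda_{\alpha,f^k,r}$; on the right, writing $\Lambda_{\alpha,f}(\zeta^j v)=\sum_{m\ge0}\Lambda_{\alpha,f,m}\zeta^{jm}v^m$ and multiplying out the $k$ factors,
\begin{equation*}
\Lambda_{\alpha,f^k,r}=\sum_{\substack{(m_0,\dots,m_{k-1})\in\mathbb Z_+^k\\ m_0+\cdots+m_{k-1}=rk}}\Big(\prod_{j=0}^{k-1}\Lambda_{\alpha,f,m_j}\Big)\,\zeta^{\,\sum_{j}jm_j}.
\end{equation*}
Since $U(\lie h\langle n\rangle)$ is commutative and $\Lambda_{\alpha,f,0}=1$, the product $\prod_j\Lambda_{\alpha,f,m_j}$ depends only on the multiset of the nonzero $m_j$; write such a multiset as $s_1^{n_1}\cdots s_l^{n_l}$ with the $s_i\in\mathbb N$ pairwise distinct and $n_i\in\mathbb N$, so that $\sum_i n_is_i=\sum_j m_j=rk$ (and automatically $\sum_i n_i\le k$). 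Collecting the terms above accordingly, the coefficient of $\Lambda_{\alpha,f,s_1}^{n_1}\cdots\Lambda_{\alpha,f,s_l}^{n_l}$ equals
\begin{equation*}
c_{\mathbf s,\mathbf n}=\sum\zeta^{\,\sum_{j}jm_j}=\sum\prod_{j=0}^{k-1}(\zeta^{j})^{m_j},
\end{equation*}
the sums running over all $k$-tuples $(m_0,\dots,m_{k-1})$ whose nonzero entries form the multiset $s_1^{n_1}\cdots s_l^{n_l}$.

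Finally I would check integrality and identify the leading term. The last expression exhibits $c_{\mathbf s,\mathbf n}$ as a symmetric polynomial, with integer coefficients, evaluated at $1,\zeta,\dots,\zeta^{k-1}$; since these are precisely the roots of $x^k-1$, their elementary symmetric polynomials are the (integer) coefficients of $x^k-1$, and so by the fundamental theorem of symmetric polynomials $c_{\mathbf s,\mathbf n}\in\mathbb Z$. As for the leading term, the only admissible multiset with $l\sum_i n_i=1$ is the single part $rk$, i.e.\ the monomial $\Lambda_{\alpha,f,rk}$, whose coefficient is $\sum_{j=0}^{k-1}(\zeta^{j})^{rk}=\sum_{j=0}^{k-1}1=k$ because $\zeta^{rk}=(\zeta^k)^r=1$; every other admissible multiset has $l\sum_i n_i>1$. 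Putting these together, and assigning coefficient $0$ to the pairs $(\mathbf s,\mathbf n)$ with $\sum_i n_i>k$, which do not occur, yields exactly the asserted formula. I do not expect a genuine obstacle here: essentially all the content is in spotting the substitution identity, after which only the routine regrouping of $k$-tuples into monomials and the elementary integrality argument remain.
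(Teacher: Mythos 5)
Your argument is correct, but it is a genuinely different route from the paper's. The paper disposes of this lemma in one line: it cites Garland's Lemma 5.11, where the identity is established for $\lie g\langle 1\rangle$ with $f=t_1$, and observes that the same computation goes through with $t_1$ replaced by an arbitrary $f\in\mathbb B^{\bullet}$ (the point being that the $\Lambda_{\alpha,f,r}$ only involve $h_\alpha\otimes f^s$, so the one-variable symmetric-function manipulations transfer verbatim). You instead give a self-contained proof: the root-of-unity factorization $\Lambda_{\alpha,f^k}(v^k)=\prod_{j=0}^{k-1}\Lambda_{\alpha,f}(\zeta^j v)$ is exactly the right substitution identity, the coefficient extraction at $v^{rk}$ is routine since $U(\lie h\langle n\rangle)$ is commutative and $\Lambda_{\alpha,f,0}=1$, and your integrality argument is sound: each coefficient $c_{\mathbf s,\mathbf n}$ is a monomial symmetric polynomial evaluated at $1,\zeta,\dots,\zeta^{k-1}$, hence an integer polynomial in the elementary symmetric polynomials of these, which are the coefficients of $x^k-1$; the leading coefficient $\sum_{j=0}^{k-1}\zeta^{jrk}=k$ comes out exactly as claimed. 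What the paper's citation buys is brevity and consistency with Garland's framework (where these identities are part of a larger symmetric-function calculus used throughout); what your argument buys is a transparent, self-contained derivation that makes the integer coefficients $m_{\mathbf s,\mathbf n}$ explicit and shows directly why the distinguished term $k\Lambda_{\alpha,f,kr}$ appears. Either is acceptable as a proof of the lemma.
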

\begin{proof}
The lemma is proven in $\lie{g}\langle1\rangle$ for $f=t_1$ as part of Lemma 5.11 in \cite{G}. We can extend it to the current setting by replacing $t_1$ with $f$.
\end{proof}

The pure tensors in $\mathcal{C}\otimes\mathbb{B}[n]$ and $\mathcal{C}\otimes\mathbb{B}\langle n\rangle$ form bases for $\lie g[n]$ and $\lie g\langle n\rangle$ respectively, which we denote by $\mathcal{B}[n]$ and $\mathcal{B}\langle n\rangle$ respectively. 
Given an order on each of these sets $\mathcal{B}[n]$ and $\mathcal{B}\langle n\rangle$ and a PBW monomial with respect to this order, we construct an ordered monomial in the elements of the sets

$$\cal M[n]=\left\{(x^\pm_{\alpha}\otimes b)^{(k)}, \ \Lambda_{i,c,r}, \ \binom{h_{i}\otimes1}{k}\  \bigg|\ \alpha \in R^+,\ i\in I,\ b,c \in \mathbb B[n],\ c\ne 1,\  k,r\in\mathbb N\right\},$$ and
$$\cal M\langle n\rangle=\left\{(x^\pm_{\alpha}\otimes b)^{(k)}, \ \Lambda_{i,c,r}, \ \binom{h_{i}\otimes1}{k}\  \bigg|\ \alpha \in R^+,\ i\in I,\ b,c \in \mathbb B\langle n\rangle,\ c\ne 1 ,\ k,r\in\mathbb N\right\},$$
where
$$(x^\pm_{\alpha}\otimes b)^{(k)}:=\frac{(x^\pm_{\alpha}\otimes b)^{k}}{k!},\qquad  \binom{h_i\otimes1}{k}:=\frac{h_i\otimes 1(h_i\otimes1-1)\dots(h_i\otimes 1-k+1)}{k!},$$
via the correspondence 
$$(x^\pm_{\alpha}\otimes b)^k \leftrightarrow (x^\pm_{\alpha}\otimes b)^{(k)}, \quad (h_{i}\otimes1)^k \leftrightarrow \binom{h_{i}\otimes1}{k}, \quad \text{ and } \quad (h_{i}\otimes c)^{r} \leftrightarrow\Lambda_{i,c,r}.$$

Using the obvious similar correspondence we consider monomials in $U(\lie g)$ formed by elements of
$$\cal M=\left\{ (x_{\alpha}^\pm)^{(k)},\tbinom{h_i}{k}  :  \alpha\in R^+, i\in I, k\in\mathbb N\right\}.$$
It is clear that we have natural inclusions $\cal M\subset \cal M[n]\subset \cal M\langle n\rangle$. The set of ordered monomials thus obtained are bases of $U(\lie g)$, $U(\lie g[n])$, and $U(\lie g\langle n\rangle)$, respectively.

Let  $U_{\mathbb Z}(\lie g) \subseteq U(\lie g)$, $U_{\mathbb Z}(\lie g[n])\subseteq U(\lie g[n])$, and $U_{\mathbb Z}(\lie g\langle n\rangle)\subseteq U(\lie g\langle n\rangle)$ be the $\mathbb Z$--subalgebras generated respectively by
$$\{(x_{\alpha}^\pm)^{(k)}  :  \alpha\in R^+, k\in\mathbb Z_+\},\qquad \{(x^\pm_{\alpha}\otimes b)^{(k)} :  \alpha \in R^+, b\in \mathbb B[n], k\in\mathbb Z_+\}$$ 
$$\text{ and }\qquad \{(x^\pm_{\alpha}\otimes b)^{(k)} :  \alpha \in R^+, b\in \mathbb B\langle n\rangle, k\in\mathbb Z_+\}.$$

We have the following crucial theorem which was proved in \cite{kosagz}.

\begin{thm} \label{gforms}
The subalgebra $U_{\mathbb Z}(\lie g)$ is a free $\mathbb Z$-module and the set of ordered monomials constructed from $\cal M$ is a $\mathbb Z$-basis of $U_{\mathbb Z}(\lie g)$.
\end{thm}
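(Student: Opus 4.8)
This is Kostant's integral form theorem \cite{kosagz}; I outline the proof strategy. Since the ordered monomials built from $\cal M$ already form a $\mathbb C$-basis of $U(\lie g)$ (a standard consequence of PBW together with the fact that each $(x_\alpha^\pm)^{(k)}$ and $\binom{h_i}{k}$ differs from $(x_\alpha^\pm)^k$, resp.\ $(h_i)^k$, by a unit scalar and lower-order terms), they are automatically $\mathbb Z$-linearly independent. Hence the only substantive point is that their $\mathbb Z$-span, which I denote $M_{\mathbb Z}$, coincides with the subalgebra $U_{\mathbb Z}(\lie g)$ generated by the divided powers $(x_\alpha^\pm)^{(k)}$; once this is shown, freeness of $U_{\mathbb Z}(\lie g)$ as a $\mathbb Z$-module and the basis assertion follow at once. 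I would prove the two inclusions $M_{\mathbb Z}\subseteq U_{\mathbb Z}(\lie g)$ and $U_{\mathbb Z}(\lie g)\subseteq M_{\mathbb Z}$ separately, the second being the harder ``straightening'' direction.

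For $M_{\mathbb Z}\subseteq U_{\mathbb Z}(\lie g)$ I reduce to rank one. For a single $\lie{sl}_2$-triple $\{x^+,x^-,h\}$ one establishes the basic divided-power relations with integer coefficients, above all the commutation identity
$$(x^+)^{(a)}(x^-)^{(b)}=\sum_{j\ge 0}(x^-)^{(b-j)}\binom{h-a-b+2j}{j}(x^+)^{(a-j)};$$
specializing $a=b=k$ and isolating the term with no $x^\pm$ gives $\binom{h}{k}\in U_{\mathbb Z}(\lie{sl}_2)$ for all $k$, and the Vandermonde identity $\binom{h-m}{k}=\sum_i\binom{h}{i}\binom{-m}{k-i}$ then gives $\binom{h-m}{k}\in U_{\mathbb Z}(\lie{sl}_2)$ for every $m\in\mathbb Z$. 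Applying this inside the $\lie{sl}_2$-copy attached to each $i\in I$ yields $\binom{h_i}{k}\in U_{\mathbb Z}(\lie g)$, whence every ordered monomial from $\cal M$ is a product of elements of $U_{\mathbb Z}(\lie g)$, giving the inclusion.

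For the reverse inclusion it suffices, since the $(x_\alpha^\pm)^{(k)}$ generate $U_{\mathbb Z}(\lie g)$, to show that $M_{\mathbb Z}$ is closed under left multiplication by each such generator, i.e.\ that the product of a generator with an ordered monomial can be rewritten as a $\mathbb Z$-linear combination of ordered monomials. This is an induction --- ordered, say, by the height of $\alpha$ and by the length and weight of the monomial --- fed by: the Chevalley relations, where the relevant structure constants and the $(p+1)$-factors from root strings are integers; the identity $\binom{h_\alpha}{k}=\sum \prod_i\binom{h_i}{\ell_i}\cdot(\text{integer})$ obtained by iterated Vandermonde (needed because intermediate steps produce $h_\alpha$-binomials for non-simple $\alpha$); integer binomial identities for products $\binom{h_i}{k}\binom{h_i}{\ell}$ and for moving $\binom{h_i}{k}$ past a root-vector divided power (a scalar shift of $h_i$, again Vandermonde); and, in the mixed case of moving $(x_\alpha^+)^{(k)}$ past $(x_\beta^-)^{(\ell)}$, the rank-one identity above when $\beta=\alpha$ and iterated-bracket root-string computations plus PBW-straightening in $U(\lie n^\pm)$ when $\beta\ne\alpha$. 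The main obstacle is exactly the integrality bookkeeping in this last case: one must check that every coefficient generated while commuting a positive-root divided power past negative-root divided powers and Cartan binomials --- where shifted $h_\alpha$-binomials and products of structure constants proliferate --- stays in $\mathbb Z$, and set up the induction so that denominators are never introduced. Closure then gives $U_{\mathbb Z}(\lie g)\subseteq M_{\mathbb Z}$, completing the argument.
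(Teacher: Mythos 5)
The paper does not actually prove Theorem \ref{gforms}; it quotes it from Kostant \cite{kosagz}, so there is no internal argument to compare with, and what you propose is an outline of that classical proof. Its first half is sound: the ordered monomials from $\cal M$ form a $\mathbb C$-basis of $U(\lie g)$, hence are $\mathbb Z$-independent, and the inclusion $M_{\mathbb Z}\subseteq U_{\mathbb Z}(\lie g)$ does follow from the rank-one commutation identity (which is the paper's Lemma \ref{commutrels}\eqref{commutrels1}) together with Vandermonde -- provided you make explicit the induction on $k$ that lets you discard the terms with $j<k$: for those you need $\binom{h-m}{j}\in U_{\mathbb Z}(\lie{sl}_2)$ for all $j<k$ and $m\in\mathbb Z$, which is exactly the inductive hypothesis combined with $\binom{h-m}{j}=\sum_i\binom{h}{i}\binom{-m}{j-i}$. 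That part is essentially complete.

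The genuine gap is in the hard direction $U_{\mathbb Z}(\lie g)\subseteq M_{\mathbb Z}$. In the closure argument the decisive case is commuting a divided power $(x_\alpha^\pm)^{(k)}$ past a divided power of a root vector attached to a \emph{different} root, and here ``iterated-bracket root-string computations plus PBW-straightening in $U(\lie n^\pm)$'' does not suffice: PBW straightening over $\mathbb C$ carries no integrality information, and root-string data only controls single brackets $[x_\alpha,x_\beta]$, not the coefficients produced when $k$-th and $\ell$-th divided powers are interchanged. What Kostant actually supplies at precisely this point (via an exponential computation, in effect the Chevalley commutator formula; in the present paper's framework the analogous statement is Lemma \ref{commutrels}\eqref{commutrels3}, quoted from \cite[Eq.~(4.1.6)]{sam2}) is an explicit identity expressing $(x_\alpha^{\pm})^{(k)}(x_\beta^{\pm})^{(\ell)}$ as $(x_\beta^{\pm})^{(\ell)}(x_\alpha^{\pm})^{(k)}$ plus a $\mathbb Z$-linear combination of monomials of strictly smaller degree in divided powers of the root vectors $x_{i\alpha+j\beta}$, with the corresponding statements in the mixed-sign case. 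Without proving (or at least invoking) such divided-power commutator lemmas, your induction has no integral input to run on -- you correctly name the ``integrality bookkeeping'' as the obstacle but do not overcome it -- so as written the second inclusion is a plan rather than a proof; with those lemmas inserted, your argument is exactly the standard one.
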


The next theorem is a particular case of \cite[Theorem 3.2]{sam1} (see also \cite{G,mitz} for the affine Kac-Moody algebra case.)

\begin{thm} \label{gnforms}
The subalgebras $U_{\mathbb Z}(\lie g[n])$ and $U_{\mathbb Z}(\lie g\langle n\rangle)$ are free $\mathbb Z$-modules and the set of ordered monomials constructed from $\cal M[n]$ and $\cal M\langle n\rangle$ are $\mathbb Z$-bases of $U_{\mathbb Z}(\lie g[n])$ and $U_{\mathbb Z}(\lie g\langle n\rangle)$ respectively.
\end{thm}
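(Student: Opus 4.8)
The plan is to reduce the whole statement to a spanning assertion. I would treat $U_{\mathbb Z}(\lie g[n])$ in detail; the multiloop case $U_{\mathbb Z}(\lie g\langle n\rangle)$ runs in parallel, the only difference from the multicurrent case being that Laurent monomials are allowed, and handling Laurent monomials is already part of the affine (one-variable) case treated in \cite{G}. First I would note that, by the PBW isomorphisms recalled above, the PBW monomials form a $\mathbb C$-basis of $U(\lie g[n])$, and that the passage from these to the ordered monomials built from $\cal M[n]$ is an upper-triangular change of basis with respect to length: for fixed $i$ and monomial $c$ one has $\Lambda_{i,c,r}=\tfrac{(-h_i\otimes c)^r}{r!}+(\text{shorter monomials in the }h_i\otimes c^j)$ and $\binom{h_i\otimes 1}{k}=\tfrac{(h_i\otimes 1)^k}{k!}+(\text{shorter})$. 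Hence the ordered monomials from $\cal M[n]$ are again a $\mathbb C$-basis of $U(\lie g[n])$, and in particular they are $\mathbb Z$-linearly independent. It therefore suffices to prove that $U_{\mathbb Z}(\lie g[n])$ coincides with the $\mathbb Z$-span of these ordered monomials, since a $\mathbb Z$-module spanned by a $\mathbb Z$-linearly independent set is free on that set.

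This equality has two halves. On one side, every ordered monomial lies in $U_{\mathbb Z}(\lie g[n])$: the $(x_\alpha^\pm\otimes b)^{(k)}$ are generators by definition; each $\binom{h_i\otimes 1}{k}$ lies in $U_{\mathbb Z}(\lie g)\subseteq U_{\mathbb Z}(\lie g[n])$ by Theorem \ref{gforms}; and each $\Lambda_{i,c,r}$, which by definition involves only the elements $h_i\otimes c^s$, lies in the $\mathbb Z$-subalgebra generated by $x_i^\pm\otimes c^s$ ($s\geq 0$), because Garland's identity expressing $\Lambda_{i,c,r}$ as a $\mathbb Z$-polynomial in products of divided powers of $x_i^+\otimes c^a$ and $x_i^-\otimes c^{a'}$ (see \cite{G}) applies verbatim with $t$ replaced by $c$. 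On the other side, I would show that $U_{\mathbb Z}(\lie g[n])$ lies in the $\mathbb Z$-span by a straightening algorithm: induct on total degree to rewrite an arbitrary product of the generators $(x_\alpha^\pm\otimes b)^{(k)}$ as a $\mathbb Z$-linear combination of ordered monomials. The elementary moves are (a) reordering two divided powers attached to a common root $\alpha$, controlled by the $\lie{sl}_2\otimes\mathbb C[n]$-analogue of Garland's identities, which introduces $\Lambda_{\alpha,\cdot,\cdot}$ and $\binom{h_\alpha}{\cdot}$ factors; (b) reordering divided powers attached to distinct roots $\beta\neq\gamma$, controlled by Kostant's integral commutator formulas (Theorem \ref{gforms}) applied factorwise together with the integrality of the Chevalley structure constants $N_{\beta\gamma}$; and (c) commuting the $\Lambda_{i,c,r}$ and $\binom{h_i\otimes 1}{k}$ into their prescribed positions, for which the relations $[h_i\otimes c^s,\,x_\beta^\pm\otimes b]=\pm\beta(h_i)\,(x_\beta^\pm\otimes c^s b)$ (with $\beta(h_i)\in\mathbb Z$) furnish explicit integral formulas. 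Lemma \ref{f^kintermsoff} is used throughout to rewrite any $\Lambda_{\alpha,f^k,r}$ produced at an intermediate step as a $\mathbb Z$-combination of products of $\Lambda_{\alpha,f,s}$, keeping the computation inside the $\mathbb Z$-span of $\cal M[n]$-monomials.

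The hard part is step (a): the $\lie{sl}_2\otimes\mathbb C[n]$-version of Garland's identities with integral coefficients. In one variable Garland evaluates products such as $(x^+\otimes t^{-1})^{(k)}(x^-\otimes t)^{(k)}$ explicitly, but with $n$ commuting variables a product $(x_\alpha^+\otimes b)^{(j)}(x_\alpha^-\otimes c)^{(l)}$ produces $h_\alpha\otimes(b^p c^q)$ for many new monomials at once, and one must establish by a careful double induction that the reordered expression is an integral combination of ordered monomials in the $(x_\alpha^\pm\otimes\cdot)^{(\cdot)}$, the $\binom{h_\alpha}{\cdot}$, and the $\Lambda_{\alpha,\cdot,\cdot}$. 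This is exactly the content of \cite[Theorem 3.2]{sam1}, of which the theorem stated above is the special case $\mathcal A=\mathbb C[n]$ or $\mathcal A=\mathbb C\langle n\rangle$, so in the end one may simply invoke that result; the outline above indicates the structure of its proof.
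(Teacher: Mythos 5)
Your proposal is correct and ultimately rests on the same step as the paper: the paper proves Theorem \ref{gnforms} simply by observing that it is the special case $\mathcal A=\mathbb C[n]$ or $\mathcal A=\mathbb C\langle n\rangle$ of \cite[Theorem 3.2]{sam1}, which is exactly the citation you invoke at the end. Your additional outline (triangular change of basis from PBW monomials, membership of the generators in the integral form, and the straightening algorithm whose hard case is the $\lie{sl}_2\otimes\mathcal A$ Garland-type identities) is a faithful sketch of how that cited result is proved, so there is no substantive difference in approach.
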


In particular, if $\lie a\in\{\lie g,\lie n^\pm,\lie h,\lie g[n],\lie n^\pm[n],\lie h[n],\lie n^\pm\langle n\rangle,\lie h\langle n\rangle\}$ and we set
$$
U_\mathbb Z(\lie a) := U(\lie a)\cap U_\mathbb Z(\lie g\langle n\rangle)
$$
we get
$$
\mathbb C\otimes_\mathbb Z U_\mathbb Z(\lie a) \cong  U(\lie a).
$$
In particular, $U_\mathbb Z(\lie g\langle n\rangle)$, $U_\mathbb Z(\lie g[n])$, and $U_\mathbb Z(\lie g)$ are \textit{integral forms} of $U(\lie g\langle n\rangle)$, $U(\lie g[n])$, and $U(\lie g)$, respectively.

\begin{rem} \
\begin{enumerate} 
\item If $\lie a$ is as above, $U_\mathbb Z(\lie a)$ is a free $\mathbb Z$-module spanned by monomials formed by elements of $M\cap U(\lie a)$ for the appropriate $M\in\{\cal M,\cal M[n],\cal M\langle n\rangle\}$.
\item 
Notice that $U_\mathbb Z(\lie g)=U(\lie g)\cap U_\mathbb Z(\lie g\langle n\rangle)$, i.e., Kostant's integral form of $\lie g$ coincides with its intersection with the integral form of $U(\lie g\langle n\rangle)$ which allows us to regard $U_\mathbb Z(\lie g)$ as a $\mathbb Z$-subalgebra of $U_\mathbb Z(\lie g\langle n\rangle)$. We can regard $U_\mathbb Z(\lie g[n])$ as a $\mathbb Z$-subalgebra of $U_\mathbb Z(\lie g\langle n\rangle)$ similarly.
\end{enumerate}
\end{rem}

Given a field $\mathbb F$, the $\mathbb F$--hyperalgebra of $\lie a$ is defined by
$$U_\mathbb F(\lie a) :=  \mathbb F\otimes_{\mathbb Z}U_\mathbb Z(\lie a)$$
where $\lie a\in\{\lie g,\lie n^\pm,\lie h,\lie g[n],\lie n^\pm[n],\lie h[n],\lie g\langle n\rangle,\lie n^\pm\langle n\rangle,\lie h\langle n\rangle\}$. We will refer to $U_\mathbb F(\lie g\langle n\rangle)$ as the \textit{hyper multiloop algebra} of $\lie g$ in $n$-variables over $\mathbb F$ and $U_\mathbb F(\lie g[n])$ as the \textit{hyper multicurrent algebra }of $\lie g$ in $n$-variables over $\mathbb F$. 

The PBW Theorem implies
\begin{align*}\label{tridecomp}
U_\mathbb F(\lie g)&=U_\mathbb F(\lie n^-)U_\mathbb F(\lie h)U_\mathbb F(\lie n^+)\\
U_\mathbb F(\lie g[n])&=U_\mathbb F(\lie n^-[n])U_\mathbb F(\lie h[n])U_\mathbb F(\lie n^+[n])\\
U_\mathbb F(\lie g\langle n\rangle)&=U_\mathbb F(\lie n^-\langle n\rangle)U_\mathbb F(\lie h\langle n\rangle)U_\mathbb F(\lie n^+\langle n\rangle).
\end{align*}
We will keep denoting by $x$ the image of an element $x\in U_\mathbb Z(\lie a)$ in $U_\mathbb F(\lie a)$.

\begin{rem}\label{rem1} Let $\lie a\in\{\lie g[n],\lie g\langle n\rangle\}$
	\begin{enumerate}
		\item If the characteristic of $\mathbb F$ is zero, the algebra $U_\mathbb F(\lie a)$ is naturally isomorphic to $U(\lie a_\mathbb F)$, where $\lie a_\mathbb F=\mathbb F\otimes_\mathbb Z \lie a_\mathbb Z$, $\lie g[n]_\mathbb Z$ is the $\mathbb Z$-span of $\mathcal{B}[n]$, and $\lie g\langle n\rangle_\mathbb Z$ is the $\mathbb Z$-span of $\mathcal{B}\langle n\rangle$.
		\item If $\mathbb F$ has positive characteristic we have an algebra homomorphism $U(\lie a_\mathbb F)\to U_\mathbb F(\lie a)$ which is neither injective nor surjective.
		\item The Hopf algebra structure on $U(\lie a)$ induces a natural Hopf algebra structure over $\mathbb Z$ on $U_\mathbb Z(\lie a)$ and this in turn induces a Hopf algebra structure on $U_\mathbb F(\lie a)$.
	\end{enumerate}
	\end{rem}

\subsection{Straightening Identities}

The next lemmas are fundamental in the proofs of Theorems \ref{gforms} and \ref{gnforms} and will also be crucial in the study of finite-dimensional representations of multicurrent and multiloop hyperalgebras. Lemma \ref{basicrel} was originally proved in the $U(\lie g\otimes \mathbb C[t,t^{-1}])$ setting in \cite[Lemma 7.5]{G} and in \cite[Lemma 5.4]{sam1} for a general map algebra $U(\lie g\otimes \mathcal A)$. See also \cite[Lemma 1.3]{JM1} for the first presentation in the context of hyperalgebras.

\begin{lem} 
\label{basicrel}
Let $\alpha \in R^+$, $r,s \in \mathbb N$ such that $s\ge r\ge 1$, and $a,b\in\mathbb C\langle n\rangle\setminus\{0\}$. Then,
\begin{equation}
(x^+_{\alpha}\otimes a)^{(r)}(x^-_{\alpha}\otimes b)^{(s)}-(-1)^r \left(\left(X_{\alpha,a,b} (u)\right) ^{(s-r)}\Lambda_{\alpha,ab}(u)  \right)_s\in U_\mathbb F(\lie g\langle n\rangle)U_\mathbb F(\lie n^+\langle n\rangle)^0.
\end{equation}
In particular, if $ab\in\mathbb C[n]_+$,
then
\begin{equation}
(x^+_{\alpha}\otimes a)^{(r)}\left(x^-_{\alpha}\otimes b\right)^{(s)}-(-1)^r \left(\left(X_{\alpha,a,b} (u)\right) ^{(s-r)}  \right)_s\in U_\mathbb F(\lie g[n])\left(U_\mathbb F(\lie n^+[n])^0+U_\mathbb F(\lie h[n]_0)^0\right).
\end{equation}

\noindent
where $$X_{\alpha,a,b}(u)  = \sum_{m=0}^\infty(x^-_\alpha \otimes a^mb^{m+1})u^{m+1}$$ and the subscript $s$ above means the coefficient of $u^s$ in this power series.
\vspace{-.4cm}
\hfill\qedsymbol
\vspace{0.2cm}
\end{lem}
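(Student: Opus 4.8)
\emph{The plan.} The plan is to reduce to rank one, invoke the characteristic-zero straightening identity of \cite{sam1} (which itself builds on \cite[Lemma 7.5]{G}), and then descend to the hyperalgebra through Kostant's integral form, exactly as was done in the affine case in \cite[Lemma 1.3]{JM1}. First I would fix $\alpha\in R^+$ and pass to the $\lie{sl}_2$-triple $\lie s_\alpha:=\mathbb C x^+_\alpha\oplus\mathbb C h_\alpha\oplus\mathbb C x^-_\alpha\cong\lie{sl}_2$. Every factor occurring in the two congruences already lies in $U_\mathbb F(\lie s_\alpha\langle n\rangle)$: the divided powers $(x^\pm_\alpha\otimes c)^{(k)}$ by construction, and the coefficients $\Lambda_{\alpha,ab,m}$ because by definition they are polynomials in the $h_\alpha\otimes(ab)^j$. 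Since $\lie n^+\cap\lie s_\alpha=\mathbb C x^+_\alpha$ and $\lie h\cap\lie s_\alpha=\mathbb C h_\alpha$, one has $U_\mathbb F((\mathbb C x^+_\alpha)\langle n\rangle)^0\subseteq U_\mathbb F(\lie n^+\langle n\rangle)^0$ and, in the current-algebra case, $U_\mathbb F((\mathbb C h_\alpha)[n]_0)^0\subseteq U_\mathbb F(\lie h[n]_0)^0$; hence it suffices to prove the (formally stronger) congruences inside $U_\mathbb F(\lie s_\alpha\langle n\rangle)$, so one may assume $\lie g=\lie{sl}_2$ with $\alpha$ its unique positive root.

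\emph{The rank-one identity over $\mathbb C$.} Inside $U(\lie{sl}_2\langle n\rangle)\cong U_\mathbb C(\lie{sl}_2\langle n\rangle)$ both congruences are instances of \cite[Lemma 5.4]{sam1}, applied to the commutative algebra $\mathcal A=\mathbb C\langle n\rangle$ (respectively $\mathcal A=\mathbb C[n]$ for the second assertion). I would not redo this computation, only recall its mechanism: one commutes $(x^+_\alpha\otimes a)^{(r)}$ to the right of $(x^-_\alpha\otimes b)^{(s)}$ by repeated use of $[x^+_\alpha\otimes a,x^-_\alpha\otimes b]=h_\alpha\otimes ab$ and $[h_\alpha\otimes c,x^\pm_\alpha\otimes d]=\pm 2\,(x^\pm_\alpha\otimes cd)$, and a suitable induction on $r$ (Garland's) shows that the $\lie h\langle n\rangle$-valued ``correction'' that survives is exactly the coefficient of $u^s$ in $(-1)^r(X_{\alpha,a,b}(u))^{(s-r)}\Lambda_{\alpha,ab}(u)$ — this is precisely why one packages the $x^-_\alpha\otimes a^mb^{m+1}$ into $X_{\alpha,a,b}(u)$ and introduces the exponential $\Lambda_{\alpha,ab}(u)$ — while every term picking up a right-hand factor from $U(\mathbb C x^+_\alpha\langle n\rangle)^0$ is absorbed into the right side.

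\emph{Descent to $U_\mathbb F$ and the ``in particular'' clause.} To pass from $\mathbb C$ to an arbitrary field $\mathbb F$, I would take $a,b\in\mathbb B\langle n\rangle$, so that the first congruence is a statement in $U_\mathbb F(\lie{sl}_2\langle n\rangle)=\mathbb F\otimes_\mathbb Z U_\mathbb Z(\lie{sl}_2\langle n\rangle)$, and observe that each of its terms — and the error term from the previous step — is a $\mathbb Z$-linear combination of ordered monomials built from $\cal M\langle n\rangle$: the divided powers $(x^\pm_\alpha\otimes c)^{(k)}$ are among the $\mathbb Z$-algebra generators of $U_\mathbb Z(\lie{sl}_2\langle n\rangle)$, and the coefficients $\Lambda_{\alpha,ab,m}$ lie in $U_\mathbb Z(\lie h\langle n\rangle)\subseteq U_\mathbb Z(\lie{sl}_2\langle n\rangle)$, by Theorem \ref{gnforms} (using Lemma \ref{f^kintermsoff} when $ab$ is a proper power of a basis element). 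Since $U_\mathbb Z(\lie{sl}_2\langle n\rangle)$ embeds into $U_\mathbb C(\lie{sl}_2\langle n\rangle)$ and the integral forms are compatible with the PBW decomposition (Theorem \ref{gnforms}), the $\mathbb C$-congruence above is the base change of a genuine identity in $U_\mathbb Z(\lie{sl}_2\langle n\rangle)$ modulo the left ideal $U_\mathbb Z(\lie{sl}_2\langle n\rangle)U_\mathbb Z(\lie n^+\langle n\rangle)^0$; applying $\mathbb F\otimes_\mathbb Z(-)$ then yields the first congruence over $\mathbb F$. (For non-monomial $a,b$ the statement is read over $\mathbb C$, where it is \cite[Lemma 5.4]{sam1}.) For the second assertion I would deduce it from the first: when $ab\in\mathbb C[n]_+$ all terms in sight lie in $U_\mathbb F(\lie g[n])$ and $\Lambda_{\alpha,ab}(u)\equiv 1$ modulo the left ideal generated by $U_\mathbb F(\lie h[n]_0)^0$, so $\big((X_{\alpha,a,b}(u))^{(s-r)}\Lambda_{\alpha,ab}(u)\big)_s$ and $\big((X_{\alpha,a,b}(u))^{(s-r)}\big)_s$ differ by an element of $U_\mathbb F(\lie g[n])\big(U_\mathbb F(\lie n^+[n])^0+U_\mathbb F(\lie h[n]_0)^0\big)$; combining this with the first congruence and the PBW-type inclusion $U_\mathbb F(\lie g\langle n\rangle)U_\mathbb F(\lie n^+\langle n\rangle)^0\cap U_\mathbb F(\lie g[n])\subseteq U_\mathbb F(\lie g[n])U_\mathbb F(\lie n^+[n])^0$ gives the second congruence.

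\emph{Main obstacle.} The step I expect to be the main obstacle — and the reason the characteristic-zero argument cannot simply be transplanted — is the descent: one must be sure that the $\lie h\langle n\rangle$-valued coefficients thrown off by the rank-one computation (a priori only rational polynomials in the $h_\alpha\otimes c$), together with the error term in $U\cdot U(\lie n^+\langle n\rangle)^0$, all lie in Kostant's $\mathbb Z$-form, since only then does the identity over $\mathbb C$ force one over a field of positive characteristic. This is precisely what the structure theory of the integral forms (Theorems \ref{gforms}--\ref{gnforms}) and Garland's power-series identities (Lemma \ref{f^kintermsoff}) are there to supply. If one instead wanted a self-contained proof, the principal labor would be the combinatorial bookkeeping of the rank-one step itself, i.e.\ Garland's induction.
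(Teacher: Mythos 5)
Your proposal is correct and takes essentially the same route as the paper, which offers no independent argument for Lemma \ref{basicrel} but simply cites \cite[Lemma 7.5]{G}, \cite[Lemma 5.4]{sam1} and \cite[Lemma 1.3]{JM1}: the straightening identity is known at the level of the integral form (via the rank-one/Garland induction) and passes to $U_\mathbb F$ by base change. The points you single out as the main obstacle --- that the $\Lambda_{\alpha,ab,m}$ and the error term lie in the $\mathbb Z$-form, the compatibility of the PBW bases, and reading $a,b$ as monomials so the statement makes sense in positive characteristic --- are exactly the content supplied by those references together with Theorems \ref{gforms}--\ref{gnforms} and Lemma \ref{f^kintermsoff}, so nothing further is needed.
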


\

Given $\alpha \in R^+$, $a\in\mathbb{C}\langle n\rangle$ and $k\ge 0$, define the degree of $(x^\pm_{\alpha} \otimes a)^{(k)}$ to be $k$. For a monomial of the form $(x^\pm_{\alpha_1} \otimes a_1)^{(k_1)} \cdots (x^\pm_{\alpha_l} \otimes a_l)^{(k_l)}$ ($\alpha_1,\ldots,\alpha_l\in R^+$, $a_1,\ldots,a_l\in\mathbb{C}\langle n\rangle$, $k_1,\ldots,k_l\in\mathbb{Z}_+$, and choice of $\pm$ fixed) define its degree to be $k_1+\cdots +k_l$.

 \begin{lem}\label{commutrels} Let $\alpha,\beta\in R^+$, $i\in I$, $k,l \in \mathbb N$ and $a,b\in\mathbb{C}\langle n\rangle$.

\begin{enumerate}

\item \label{commutrels3} $\left(x^\pm_{\alpha} \otimes a\right)^{(k)}\left(x^\pm_{\beta} \otimes b\right)^{(l)}$ is in the $\mathbb Z$--span of $\left(x^\pm_{\beta} \otimes b\right)^{(l)}\left(x^\pm_{\alpha} \otimes a\right)^{(k)}$ together with monomials of degree strictly smaller than $k+l$.

\item \label{commutrels1} $$\left(x_{\alpha}^+\right)^{(l)}\left(x_{\alpha}^-\right)^{(k)} = \sum_{m=0}^{\rm{min}\{k,l\}}\left(x_{\alpha}^-\right)^{(k-m)}\binom{h_{\alpha}-k-l+2m}{m}\left(x_{\alpha}^+\right)^{(l-m)}.$$

\item \label{commutrels2} $$\binom{h_{i}}{l}\left(x^\pm_{\alpha} \otimes a\right)^{(k)} = \left(x^\pm_{\alpha} \otimes a\right)^{(k)}\binom{h_{i}\pm k\alpha(h_{i})}{l}.$$

\item \label{commutrels4} $$\left(x^\pm_{\alpha} \otimes a\right)^{(k)}\left(x^\pm_{\alpha} \otimes a\right)^{(l)} = \binom{k+l}{k}\left(x^\pm_{\alpha} \otimes a\right)^{(k+l)}.$$
\end{enumerate}
\end{lem}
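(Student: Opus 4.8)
The plan is to prove each of the four identities in Lemma~\ref{commutrels} by reducing them to the corresponding statements inside the rank-one subalgebra $\lie{sl}_2\langle n\rangle$ (or even $\lie{sl}_2$) and then invoking the classical Kostant/Garland computations, which are available since these identities hold over $\mathbb{Z}$ and hence persist after base change to $\mathbb{F}$.

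For part~\eqref{commutrels3}, the key observation is that $[x^\pm_\alpha\otimes a, x^\pm_\beta\otimes b] = [x^\pm_\alpha,x^\pm_\beta]\otimes ab$, and since $x^\pm_\alpha,x^\pm_\beta\in\lie n^\pm$, the bracket $[x^\pm_\alpha,x^\pm_\beta]$ is a $\mathbb{Z}$-linear combination of root vectors $x^\pm_\gamma$ with $\gamma\in R^+$ (this is a Chevalley basis property). Thus, when one commutes the two divided powers past each other, each ``correction term'' involves lowering the total exponent and introducing root vectors from $\lie n^\pm$ tensored with monomials from $\mathbb{B}\langle n\rangle$; a careful bookkeeping of divided powers (using $\tbinom{k+l}{k}$-type coefficients) shows all correction terms have degree strictly less than $k+l$ and lie in $U_\mathbb{Z}(\lie g\langle n\rangle)$. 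I would phrase this as an induction on $k+l$.

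Parts~\eqref{commutrels1},~\eqref{commutrels2}, and~\eqref{commutrels4} are even more direct: \eqref{commutrels1} is precisely Kostant's $\lie{sl}_2$ identity (it only involves $x^\pm_\alpha, h_\alpha$ with trivial tensor factor, so it lives in $U_\mathbb{Z}(\lie g)\subset U_\mathbb{Z}(\lie g\langle n\rangle)$ and descends to $\mathbb{F}$); \eqref{commutrels2} follows from $[h_i, x^\pm_\alpha\otimes a] = \pm\alpha(h_i)\,x^\pm_\alpha\otimes a$ together with the standard commutation of $\tbinom{h_i}{l}$ past a divided power, which is again a classical integral identity in $U_\mathbb{Z}(\lie g)$ (or can be checked directly by induction on $l$ using $\tbinom{h_i}{l} = \tbinom{h_i-1}{l} + \tbinom{h_i-1}{l-1}$); and \eqref{commutrels4} is immediate from $\frac{(x^\pm_\alpha\otimes a)^k}{k!}\cdot\frac{(x^\pm_\alpha\otimes a)^l}{l!} = \tbinom{k+l}{k}\frac{(x^\pm_\alpha\otimes a)^{k+l}}{(k+l)!}$ since $x^\pm_\alpha\otimes a$ commutes with itself, so no positive characteristic subtleties arise.

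The main obstacle is part~\eqref{commutrels3}: one must verify that the correction terms produced when reordering $(x^\pm_\alpha\otimes a)^{(k)}$ and $(x^\pm_\beta\otimes b)^{(l)}$ genuinely stay within $U_\mathbb{Z}(\lie g\langle n\rangle)$ (i.e., are $\mathbb{Z}$-combinations of ordered monomials in $\cal M\langle n\rangle$) and strictly drop the degree. This requires a uniform argument handling the interaction of the (possibly long) root string of $[x^\pm_\alpha,x^\pm_\beta]$ with divided powers; I expect to handle it by working in the appropriate nilpotent subalgebra $\lie n^\pm\langle n\rangle$, using that $U_\mathbb{Z}(\lie n^\pm\langle n\rangle)$ is the integral form from Theorem~\ref{gnforms}, and tracking degrees via the grading defined just before the lemma. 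Once the $\mathbb{Z}$-form statement is established, tensoring with $\mathbb{F}$ gives the claim over the hyperalgebra. In fact all four parts are most cleanly stated and proved as identities in $U_\mathbb{Z}(\lie g\langle n\rangle)$ first, with the $\mathbb{F}$-version following formally.
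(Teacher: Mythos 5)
Your proposal is correct and takes essentially the same route as the paper, whose proof consists of citations: item (1) is deduced from \cite[Equation (4.1.6)]{sam2}, item (2) from \cite[Lemma 26.2]{H}, item (3) by an induction, and item (4) is noted to be immediate. Your sketches simply reconstruct these standard integral-form straightening arguments (degree-lowering correction terms in $U_\mathbb Z(\lie n^\pm\langle n\rangle)$ for (1), Kostant's $\lie{sl}_2$ identity for (2), conjugation of $\tbinom{h_i}{l}$ for (3)), so there is no gap beyond the level of detail the paper itself omits.
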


\proof
(1) can be deduced from \cite[Equation (4.1.6)]{sam2}. (2) can be deduced from \cite[Lemma 26.2]{H}. (3) can be  proved by induction on $k+l$. The last item is easily established.
\endproof

\section{The main categories of modules}\label{modules}

Let $\mathbb F$ be an algebraically closed field. 

\subsection{Finite-dimensional modules for hyperalgebras \texorpdfstring{$U_\mathbb{F}(\lie g)$}{}} \label{sec:rev.g}

We now review the finite-dimensional representation theory of $U_\mathbb F(\lie g)$. If the characteristic of $\mathbb F$ is zero, then $U_\mathbb F (\lie g) \cong U(\lie g_\mathbb F)$ (cf. Remark \ref{rem1}) and the classical results can be found mainly in \cite{H}. The literature for the positive characteristic setting  is more commonly found in the context of algebraic groups, where $U_\mathbb F (\lie g)$ is known as the hyperalgebra or algebra of distributions of an algebraic group of the same Lie type as $\lie g$ (cf. \cite[Part II]{JAN}). We refer to \cite[Section 2]{JM1} for a more detailed review in the present context of hyperalgebras. 

Let $V$ be a $U_\mathbb F(\lie g)$-module. A nonzero vector $v\in V$ is called a weight vector if there exists $\mu\in U_\mathbb F(\lie h)^*$ such that $hv=\mu(h)v$ for all $h\in U_\mathbb F(\lie h)$. 
The subspace consisting of weight vectors of weight $\mu$ is called weight space of weight $\mu$ and it will be denoted by $V_\mu$. 
 When $V_\mu\ne 0$, $\mu$ it is said to be a weight of $V$ and $\wt(V) = \{\mu\in U_\mathbb F(\lie h)^*:V_\mu\ne 0\}$ is called the set of weights of $V$.
 If $V= \bigoplus_{\mu\in U_\mathbb F(\lie h)^*} V_\mu$, then $V$ is called a weight module.
 The inclusion $P\hookrightarrow U_\mathbb F(\lie h)^*$ 
gives rise to a partial order $\le$ on $U_\mathbb F(\lie h)^*$ given by $\mu\le\lambda$ if $\lambda-\mu\in Q^+$ and we have
\begin{equation}\label{e:xactonws}
(x_\alpha^\pm)^{(k)} V_\mu \subseteq V_{\mu\pm k\alpha}\quad\text{for all}\quad \alpha\in R^+, k>0,\mu\in U_\mathbb F(\lie h)^*.
\end{equation}
If $V$ is a weight-module with finite-dimensional weight spaces, its character is the function $\ch(V):U_\mathbb F(\lie h)^*\to \mathbb Z$ defined by $\ch(V)(\mu)=\dim V_\mu$. As usual, if $V$ is finite-dimensional, $\ch(V)$ can be regarded as an element of the group ring $\mathbb Z[U_\mathbb F(\lie h)^*]$ and we denote the element corresponding to $\mu\in U_\mathbb F(\lie h)^*$ by $e^\mu$. By the inclusion $P\hookrightarrow U_\mathbb F(\lie h)^*$ the group ring $\mathbb Z[P]$ can be regarded as a subring of $\mathbb Z[U_\mathbb F(\lie h)^*]$ and the action of $\cal W$ on $P$ induces an action of $\cal W$ on $\mathbb Z[P]$ by ring automorphisms with $w \cdot e^\mu = e^{w\mu}$.

If $v \in V$ is a  weight vector and $(x_\alpha^+)^{(k)} v = 0$ for all $\alpha\in R^+, k>0$, then $v$ is said to be a highest-weight vector. When $V$ is generated by a highest-weight vector, then it is said to be a highest-weight module. 

\begin{rem} The notions of lowest-weight vectors and modules are similar and obtained by replacing $(x_\alpha^+)^{(k)}$ by $(x_\alpha^-)^{(k)}$.
\end{rem} 

\begin{thm} \label{t:rh}
Let $V$ be a $U_\mathbb F(\lie g)$-module.
\begin{enumerate}[(a)]
\item If $V$ is finite-dimensional, then $V$ is a weight-module, $\wt (V) \subseteq P$, and $\dim V_\mu = \dim V_{\sigma\mu}$ for all $\sigma\in\cal W, \mu \in U_\mathbb F(\lie h)^\ast$. In particular, $\ch(V)\in\mathbb Z[P]^\cal W$.
\item \label{t:rh.weights} If $V$ is a highest-weight module of highest weight $\lambda$, then $\dim(V_{\lambda})=1$ and $V_{\mu}\ne 0$ only if $w_0\lambda \leq \mu\le \lambda$. Moreover, $V$  has a unique maximal proper submodule and, hence, also a unique irreducible quotient. In particular, $V$ is indecomposable.
\item{\label{t:rh.c}} For each $\lambda\in P^+$, the $U_\mathbb F(\lie g)$-module $W_\mathbb F(\lambda)$ given by the quotient of $U_\mathbb F(\lie g)$ by the left ideal $I_\mathbb F(\lambda)$ generated by
\begin{equation*}
U_\mathbb F (\lie n^+)^0, \quad h-\lambda(h) \quad \text{and} \quad (x_\alpha^-)^{(k)}, \quad\text{for all}\quad h\in U_\mathbb F(\lie h), \ \alpha\in R^+, \ k>\lambda(h_\alpha),
\end{equation*}
is nonzero and finite-dimensional. Moreover, every finite-dimensional highest-weight module of highest weight $\lambda$ is a quotient of $W_\mathbb F(\lambda)$.
\item If $V$ is finite-dimensional and irreducible, then there exists a unique $\lambda\in P^+$ such that $V$ is isomorphic to the irreducible quotient $V_\mathbb F(\lambda)$ of $W_\mathbb F(\lambda)$. If the characteristic of $\mathbb F$ is zero, then $W_\mathbb F(\lambda)$ is irreducible.
\item\label{t:chWg} For each $\lambda\in P^+$, $\ch(W_\mathbb F(\lambda))$ is given by the Weyl character formula. In particular, $\mu\in\wt(W_\mathbb F(\lambda))$ if, and only if, $\sigma\mu\le\lambda$ for all $\sigma\in\cal W$. Moreover, $W_\mathbb F(\lambda)$ is a lowest-weight module with lowest weight $w_0\lambda$.  \hfill \qed
\end{enumerate}
\end{thm}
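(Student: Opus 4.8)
\medskip
\noindent\textit{Plan of the proof.} All of (a)--(e) are the hyperalgebra analogues of classical facts, and the arguments of \cite[Section~2]{JM1} (which follow \cite[Part~II]{JAN} in positive characteristic and \cite{H} in characteristic zero) adapt essentially verbatim; I only indicate the route. Part (a) rests on the rank-one theory: for each $\alpha\in R^+$ the elements $(x_\alpha^\pm)^{(k)}$, $k\ge 0$, generate a copy of the $\lie{sl}_2$-hyperalgebra inside $U_\mathbb F(\lie g)$, and for a finite-dimensional module over the latter one uses two facts -- it is the direct sum of its weight spaces for $\{\binom{h_\alpha}{k}\}$, on which the weights take integral values, and a highest-weight vector in it has nonnegative $h_\alpha$-weight $d$ and is annihilated by $(x_\alpha^-)^{(k)}$ for $k>d$ (which I would check directly from Lemma~\ref{commutrels}(\ref{commutrels1})). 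Applying the first fact over the simple roots, together with the commutation relations of Lemma~\ref{commutrels}, shows a finite-dimensional $V$ is a $U_\mathbb F(\lie h)$-weight module with $\wt(V)\subseteq P$, and lifting each simple reflection to the nontrivial Weyl-group element of the corresponding $\lie{sl}_2$ gives $\mathbb F$-linear isomorphisms $V_\mu\xrightarrow{\sim}V_{s_i\mu}$, whence $\dim V_\mu=\dim V_{\sigma\mu}$ and $\ch(V)\in\mathbb Z[P]^{\cal W}$.

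For (b), let $v$ generate $V$ as a highest-weight module. By the triangular decomposition $U_\mathbb F(\lie g)=U_\mathbb F(\lie n^-)U_\mathbb F(\lie h)U_\mathbb F(\lie n^+)$ together with $U_\mathbb F(\lie n^+)^0v=0$ and $U_\mathbb F(\lie h)v=\mathbb F v$, we get $V=U_\mathbb F(\lie n^-)v$, so $\wt(V)\subseteq\lambda-Q^+$ and $\dim V_\lambda=1$ (a weight-zero element of $U_\mathbb F(\lie n^-)$ is a scalar); combined with (a), $\wt(V)$ is a finite, $\cal W$-stable set bounded above by $\lambda$, hence bounded below by $w_0\lambda$. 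An interpolation argument in the commutative algebra $U_\mathbb F(\lie h)$ shows every submodule of $V$ is a weight submodule; a proper submodule omits $v$, so its $\lambda$-weight space is $0$, hence the sum of all proper submodules still omits $v$ and is the unique maximal proper submodule -- giving the unique irreducible quotient and indecomposability.

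For (c) and (e) I would pass through a $\mathbb Z$-form. Let $V_\mathbb C(\lambda)$ be the finite-dimensional irreducible $U(\lie g)$-module of highest weight $\lambda$, with highest-weight vector $v_\lambda$, and put $W_\mathbb Z(\lambda):=U_\mathbb Z(\lie g)v_\lambda\subseteq V_\mathbb C(\lambda)$. Using the straightening identities of Section~\ref{pre} (Theorem~\ref{gforms}, Lemmas~\ref{basicrel} and \ref{commutrels}) one shows that $W_\mathbb Z(\lambda)=U_\mathbb Z(\lie n^-)v_\lambda$, that its weight spaces are free $\mathbb Z$-modules of finite rank $\dim_\mathbb C V_\mathbb C(\lambda)_\mu$, and that the left ideal $I_\mathbb F(\lambda)$ of (c) is precisely the one annihilating $v_\lambda$, so that $W_\mathbb F(\lambda)=\mathbb F\otimes_\mathbb Z W_\mathbb Z(\lambda)$. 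It follows that $W_\mathbb F(\lambda)$ is nonzero, finite-dimensional, a highest-weight module of highest weight $\lambda$, and has the same character as $V_\mathbb C(\lambda)$, i.e.\ the one given by the Weyl character formula; the description of $\wt(W_\mathbb F(\lambda))$ as the saturated set $\{\mu:\sigma\mu\le\lambda\ \forall\sigma\in\cal W\}$ is then the classical description of $\wt(V_\mathbb C(\lambda))$. For universality, the rank-one fact from (a) shows that in any finite-dimensional highest-weight module $M$ of highest weight $\lambda$ the generator is killed by $(x_\alpha^-)^{(k)}$ for $k>\lambda(h_\alpha)$, by $U_\mathbb F(\lie n^+)^0$, and by $h-\lambda(h)$, hence by $I_\mathbb F(\lambda)$, so $M$ is a quotient of $W_\mathbb F(\lambda)$. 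Finally, since by (a)--(b) the irreducible quotient $V_\mathbb F(\lambda)$ has lowest weight $w_0\lambda$ with a one-dimensional weight space, the one-dimensional $w_0\lambda$-weight space of $W_\mathbb F(\lambda)$ meets the unique maximal submodule trivially, so it generates $W_\mathbb F(\lambda)$, which is therefore a lowest-weight module of lowest weight $w_0\lambda$.

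Part (d) is then formal: a finite-dimensional irreducible $V$ is a weight module with finite weight set by (a); if $\mu$ is a $\cal W$-maximal weight then $(x_\alpha^+)^{(k)}V_\mu\subseteq V_{\mu+k\alpha}=0$ and $\mu\in P^+$, so $V$ is a highest-weight module and, by (b), equals the irreducible quotient $V_\mathbb F(\lambda)$ of $W_\mathbb F(\lambda)$ with $\lambda=\mu$ uniquely determined; in characteristic zero $W_\mathbb F(\lambda)$ is the classical Verma-module quotient by the rank-one relations, which is irreducible. The only genuinely delicate point is the one I compressed above: proving that in positive characteristic $W_\mathbb Z(\lambda)$ has finite-rank weight spaces (so base change to $\mathbb F$ neither collapses nor inflates it) and that the relations in (c) already present it over $\mathbb Z$. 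This is exactly where the straightening identities -- especially Lemma~\ref{basicrel} -- do the work, just as in the affine and current cases of \cite{JM1,bmm}; the rest is bookkeeping with the triangular decomposition and the $\lie{sl}_2$-theory.
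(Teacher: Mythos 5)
The paper does not actually prove Theorem \ref{t:rh}: it is stated as background, closed with a \qed, and the reader is referred to \cite[Section 2]{JM1}, to \cite[Part II]{JAN} for positive characteristic, and to \cite{H} for characteristic zero. So there is no in-paper argument to match; the relevant comparison is with those references, and your outline follows essentially the same standard route they use: rank-one $\lie{sl}_2$-hyperalgebra theory plus Lemma \ref{commutrels} for the weight decomposition, integrality of weights and $\cal W$-invariance of characters via lifted simple reflections; the triangular decomposition and an interpolation argument in $U_\mathbb F(\lie h)$ for (b); reduction mod $p$ of the admissible lattice $U_\mathbb Z(\lie g)v_\lambda\subseteq V_\mathbb C(\lambda)$ for (c)--(e); and the formal maximal-weight argument for (d). One caveat: the step you yourself flag as delicate --- that the ideal $I_\mathbb F(\lambda)$ is exactly the annihilator of the lattice generator, equivalently that the presented module $U_\mathbb F(\lie g)/I_\mathbb F(\lambda)$ is no bigger than $\mathbb F\otimes_\mathbb Z W_\mathbb Z(\lambda)$ and so has Weyl character --- is a genuinely nontrivial classical theorem (this is where \cite{JM1,JAN} invoke the established theory of Weyl modules, and where in this paper's loop/current analogues the straightening identity, Lemma \ref{basicrel}, carries the spanning argument), so your sketch should be read as deferring to that literature rather than proving it; with that understanding the proposal is consistent with the intended proof.
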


The module $W_\mathbb F(\lambda)$ defined in Theorem \ref{t:rh} \eqref{t:rh.c} is called the \textit{Weyl module of highest weight $\lambda$}.

\subsection{Finite-dimensional modules for the hyper multiloop algebras \texorpdfstring{$U_\mathbb F(\lie g\langle n\rangle)$}{}} \label{sec:rev.gloop}

The results of this subsection generalize to the hyper multiloop algebras ($n>1$) some results of hyper loop algebras ($n=1$) from \cite[Section 3]{JM1}. The background for this is in \cite{CPweyl}. 

\ 

Given a $U_\mathbb F(\lie g\langle n\rangle)$-module $V$ and $\xi\in U_\mathbb F(\lie h\langle n\rangle)^*$, let
\begin{equation*}
V_\xi=\{v\in V \mid \text{ for all } x\in U_\mathbb F(\lie h\langle n\rangle), \text{ there exists } s>0 \text{ such that } (x-\xi(x))^sv = 0\}.
\end{equation*}
We say that $V$ is an $\ell$-weight module if $V = \bigoplus_{\pmb\omega\in \mathcal P_\mathbb F}^{} V_{\pmb\omega}$. In this case, regarding $V$ as a $U_\mathbb F (\lie g)$-module, we have
\begin{equation*}
V_\mu=\bigoplus_{\substack{\pmb{\omega}\in\mathcal P_\mathbb F\\ \wt(\pmb\omega)=\mu}} V_{\pmb\omega} \quad\text{for all}\quad \mu\in P \quad\text{and}\quad V=\bigoplus_{\mu\in P}^{} V_\mu.
\end{equation*}
When $V_{\pmb\omega}\ne 0$ we say that $\pmb\omega$ is an $\ell$-weight of $V$ and a nonzero element of $V_{\pmb\omega}$ is said to be an $\ell$-weight vector of $\ell$-weight $\pmb\omega$. An $\ell$-weight vector $v$ is said to be a highest-$\ell$-weight vector if $U_\mathbb F(\lie h\langle n\rangle)v=\mathbb Fv$ and $(x_{\alpha}^+\otimes b)^{(k)}v = 0$ for all for all $\alpha\in R^+$, $b\in\mathbb B\langle n\rangle$, and all $k\in\mathbb N$. When $V$ is generated by a highest-$\ell$-weight vector of $\ell$-weight $\pmb\omega$, $V$ is said to be a highest-$\ell$-weight module of highest $\ell$-weight $\pmb\omega$.

\

We begin establishing some relevant relations satisfied by certain finite-dimensional modules:

\begin{prop} \label{fdprop}
Let $V$ be a finite-dimensional $U_\mathbb F(\lie g\langle n\rangle)$-module, $\lambda\in P^+$, and $v\in V_\lambda$ be such that
$$U_\mathbb F(\lie n^+\langle n \rangle)^0v=\left(\Lambda_{i,b,r}-\omega_{i, b,r}\right)v=0,$$ $i\in I$, $ b\in\mathbb B^\bullet$, $r\in\mathbb Z_+$, and some $\omega_{i,b,r}\in\mathbb F$. Then,
\begin{itemize}
\item[i)]\label{part1} $\left(x_{\alpha,\mathbf s}^-\right)^{(k)}v=0$, for all $\alpha\in R^+$, $\mathbf s\in \mathbb Z^n$ and all $k\in\mathbb N$ with $k>\lambda(h_\alpha)$,
\item[ii)] $\Lambda_{i,b,a}v=0$, for all $a\in\mathbb N$, $a>\lambda(h_i)$,
\item[iii)] $\omega_{i,b,\lambda(h_i)}\neq0$,

\item[iv)] $\Lambda_{i,b,\lambda(h_i)}\Lambda_{i,b^{-1},r}v=\Lambda_{i,b,\lambda(h_i)-r}v$, for all $r\in\mathbb N$ with $0\leq r\leq\lambda(h_i)$.

\item[v)]  Each scalar $\omega_{i,f,r}$, with $i\in I$, $f=\mathbf t^{\mathbf a}\in\mathbb B^\bullet$, where $\mathbf a=\left(a_1,\ldots,a_n\right)\in\mathbb Z^n$ and $r\in\mathbb N$, is expressed as a  polynomial expression in $\omega_{i,t_j,\lambda(h_i)}^{-1}$ and $\omega_{i,t_j,k}$, $j\in\{1,\dots,n\}$, $k\in\mathbb N$, $0\le k \le \lambda(h_i)$.

\end{itemize}

\end{prop}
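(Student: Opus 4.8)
The plan is to reduce everything to $\mathfrak{sl}_2$-type computations inside the subalgebra generated by $x_\alpha^\pm$, $h_\alpha$ (and their tensor products with $\mathbb{C}\langle n\rangle$), using Lemma~\ref{basicrel}, Lemma~\ref{commutrels}, and the known finite-dimensional theory from Theorem~\ref{t:rh} applied to $U_\mathbb F(\lie g)$. Throughout, note that since $V$ is finite-dimensional it is a weight-module for $U_\mathbb F(\lie g)$ with weights in $P$ (Theorem~\ref{t:rh}(a)), so $U_\mathbb F(\lie g)v$ is a finite-dimensional highest-weight module of highest weight $\lambda$.

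First I would prove (i). Fix $\alpha\in R^+$, $\mathbf s\in\mathbb Z^n$, and take $a=\mathbf t^{-\mathbf s}$, $b=\mathbf t^{\mathbf s}$ in Lemma~\ref{basicrel}, so that $ab=1$. For $r=s\ge 1$ the lemma gives, modulo $U_\mathbb F(\lie g\langle n\rangle)U_\mathbb F(\lie n^+\langle n\rangle)^0$, that $(x^+_\alpha\otimes\mathbf t^{-\mathbf s})^{(k)}(x^-_{\alpha,\mathbf s})^{(k)}$ equals (up to sign) the coefficient of $u^k$ in $\Lambda_{\alpha,1}(u)=\exp(-\sum_s \frac{h_\alpha}{s}u^s)$; but $\Lambda_{\alpha,1}(u)$ is a fixed power series with coefficients in $U_\mathbb F(\lie h)$, and acting on $v$ these coefficients reduce to binomial coefficients in $\lambda(h_\alpha)$ exactly as in the $U_\mathbb F(\lie g)$ case. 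Since $(x^+_\alpha\otimes \mathbf t^{-\mathbf s})^{(j)}v=0$ for $j>0$ by hypothesis, applying $(x^+_\alpha\otimes\mathbf t^{-\mathbf s})^{(k)}$ to $(x^-_{\alpha,\mathbf s})^{(k)}v$ and comparing with the $U_\mathbb F(\lie g)$-computation shows $(x^-_{\alpha,\mathbf s})^{(k)}v$ is killed by all of $U_\mathbb F(\lie n^+\langle n\rangle)^0$; being also a weight vector of weight $\lambda-k\alpha$ with $k>\lambda(h_\alpha)$, it generates a finite-dimensional highest-weight $U_\mathbb F(\lie g)$-module whose highest weight is not dominant, forcing it to be zero. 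Alternatively, and more cleanly, one invokes that inside $U_\mathbb F(\lie g)$ the element $(x^-_\alpha)^{(k)}v=0$ for $k>\lambda(h_\alpha)$ by Theorem~\ref{t:rh}(c), and then uses the $\mathfrak{sl}_2$-string argument together with Lemma~\ref{basicrel} to transfer this to $x^-_{\alpha,\mathbf s}$.

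Next, (ii) follows by extracting the $u^a$-coefficient from the $r=s=\lambda(h_i)+$ something instance of Lemma~\ref{basicrel}: more directly, (ii) and (iii) are obtained by running the generating-function identity of Lemma~\ref{basicrel} with $a=b^{-1}$ so $ab=1$, taking $r=0$ formally (i.e. using the raising operator $(x^+_\alpha\otimes b^{-1})^{(s)}$ against $(x^-_\alpha\otimes b)^{(s)}$ with various $s$), and reading off that $\Lambda_{i,b}(u)v$ is, after dividing by $\omega_{i,b,\lambda(h_i)}$, a polynomial in $u$ of degree exactly $\lambda(h_i)$ — precisely the Drinfeld-polynomial normalization familiar from \cite{CPweyl,JM1}. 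Item (iii) is the statement that the top coefficient is nonzero, which is exactly what makes this a degree-$\lambda(h_i)$ polynomial; I would deduce it by noting that if $\omega_{i,b,\lambda(h_i)}=0$ then combining (ii) with the $\mathfrak{sl}_2\langle 1\rangle$-theory (the $n=1$, $f=b$ case, already in \cite{JM1}) would force a contradiction with $\dim V_\lambda\ge 1$, or equivalently with the fact that $(x^-_{\alpha_i}\otimes b)^{(\lambda(h_i))}v\ne 0$ would have to hold for the module to have the right character — here one needs the precise $\mathfrak{sl}_2$-computation identifying $\omega_{i,b,\lambda(h_i)}$ with (a nonzero multiple of) the "leading term" of the Drinfeld polynomial. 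Item (iv) is then a formal consequence of the relation $\Lambda_{i,b}(u)\Lambda_{i,b^{-1}}(u')$-type functional equations: from $\Lambda_{\alpha,f}(u)\Lambda_{\alpha,f^{-1}}(\text{appropriate substitution})$ and the fact that $\Lambda_{i,b}(u)v$ is a polynomial of degree $\lambda(h_i)$ with nonzero top coefficient $\omega_{i,b,\lambda(h_i)}$, one sees that on $v$ the series $\Lambda_{i,b^{-1}}(u)$ acts as the "reciprocal polynomial," which unwinds coefficient-by-coefficient to (iv); I would set this up by showing $\Lambda_{i,b,\lambda(h_i)}\Lambda_{i,b^{-1}}(u)v = u^{\lambda(h_i)}\overline{\Lambda_{i,b}}(u^{-1})v$ as the identity of rational functions and equating coefficients.

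Finally, for (v), the point is that any $f=\mathbf t^{\mathbf a}\in\mathbb B^\bullet$ can be written as $f=g^k$ for some $g\in\mathbb B^\bullet$ only when $k=1$ (by the $\gcd$ condition defining $\mathbb B^\bullet$), but the $\omega_{i,f,r}$ for $f$ \emph{not} of the form $t_j^{m}$ still need to be expressed in terms of the "coordinate" data $\omega_{i,t_j,k}$. The mechanism is that the whole collection $\{\Lambda_{i,f,r}v\}$ is determined by finitely many scalars because $U_\mathbb F(\lie h\langle n\rangle)v=\mathbb Fv$ makes $v$ an algebra-homomorphism-point of the commutative algebra $U_\mathbb F(\lie h\langle n\rangle)$; combining (ii)–(iv) across all $f$ shows that this homomorphism factors through the finitely-generated quotient determined by $\omega_{i,t_j,k}^{\pm1}$ ($0\le k\le\lambda(h_i)$), because Lemma~\ref{f^kintermsoff} lets one express $\Lambda_{\alpha,f^k,r}$ in the $\Lambda_{\alpha,f,s}$, and relations coming from products $x^-_{\alpha}\otimes\mathbf t^{\mathbf a}=x^-_\alpha\otimes (t_1^{a_1}\cdots)$ via iterated use of Lemma~\ref{basicrel} and Lemma~\ref{commutrels} let one express mixed-monomial $h_\alpha$-data in terms of single-variable data. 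The hard part will be step (v): making precise exactly which polynomial identities among the $\omega_{i,f,r}$ hold and checking that they suffice to eliminate everything but the $\omega_{i,t_j,k}$ — this is the combinatorial heart of the Drinfeld-polynomial description in several variables, and I expect it to require carefully organizing the action of $U_\mathbb F(\lie h\langle n\rangle)$ on $v$ as a point of (a finite-type piece of) $\operatorname{Spec}$ of the coordinate ring of copies of $\mathbb{G}_m^n$, one for each $i\in I$, cut down to symmetric functions of $\le\lambda(h_i)$ points. The remaining items (i)–(iv) I expect to be essentially the $n=1$ arguments of \cite{JM1} run with $t_1$ replaced by a general $b\in\mathbb B^\bullet$, as already signposted after Lemma~\ref{basicrel}.
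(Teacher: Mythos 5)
Your reduction of (i) to the copy of $U_\mathbb F(\lie{sl}_2)$ generated by the elements $(x^\pm_\alpha\otimes\mathbf t^{\mp\mathbf s})^{(l)}$ is exactly the paper's argument and works; but your primary version of (i), which claims that $(x^-_{\alpha,\mathbf s})^{(k)}v$ is annihilated by all of $U_\mathbb F(\lie n^+\langle n\rangle)^0$, is unjustified (and unnecessary). More seriously, your mechanism for (ii) is miscalibrated: pairing $(x^+_\alpha\otimes b^{-1})^{(s)}$ against $(x^-_\alpha\otimes b)^{(s)}$ puts $ab=1$ into Lemma \ref{basicrel}, so the coefficients you extract are those of $\Lambda_{\alpha,1}(u)$, i.e.\ binomials in $\lambda(h_\alpha)$, which say nothing about $\Lambda_{i,b,a}$. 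The pairing actually needed (and used in the paper) is $(x^+_i\otimes 1)^{(a)}$ against $(x^-_i\otimes b)^{(a)}$, so that $ab=b$ and $r=s=a$ trivializes the divided power of the $X$-series; then (ii) is immediate from (i), and (iii) follows from a two-step nonvanishing argument you only gesture at: first $(x^-_i\otimes b)^{(\lambda(h_i))}v\ne 0$ via the $\lie{sl}_2$ copy attached to $b$, and second this vector is a lowest-weight vector for the constant $\lie{sl}_2$ copy (weight considerations kill $(x^-_i)^{(m)}$ on it), so applying $(x^+_i)^{(\lambda(h_i))}$ gives a nonzero vector equal to $\pm\,\omega_{i,b,\lambda(h_i)}v$.

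The genuine gaps are (iv) and (v). For (iv) you declare the identity a ``formal consequence'' of $\Lambda_{i,b}(u)\Lambda_{i,b^{-1}}(u')$-type functional equations, but no such identity holds in $U_\mathbb F(\lie h\langle n\rangle)$: the elements $h_i\otimes b^{s}$ and $h_i\otimes b^{-s}$ are algebraically independent there, so the statement that $\Lambda_{i,b^{-1}}(u)$ acts on $v$ as the reciprocal polynomial of $\Lambda_{i,b}(u)v$ is precisely the content of (iv), not a tool for proving it. Establishing it requires bringing in the raising and lowering operators; this is what the paper's lengthy computation with Lemmas \ref{basicrel} and \ref{commutrels} (a Garland-style argument, using that $\lambda-(\lambda(h_i)-m+r)\alpha_i$ is not a weight to kill all but one term) accomplishes. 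For (v) you explicitly leave the ``combinatorial heart'' open; the paper fills it with a concrete induction on $\alpha_{\mathbf a}=\sum_k|a_k|$: apply Lemma \ref{basicrel} to $(x^+_i\otimes g)^{(l_i)}(x^-_i\otimes 1)^{(l_i+r)}v=0$, then act by $(x^+_i\otimes f)^{(r)}$ to get $\omega_{i,g,l_i}\,\omega_{i,f,r}\,v=-\sum_j\omega_{i,g,j}H_jv$, where $H_j$ involves only terms $\Lambda_{i,g^{s}f^{c},m}$ with $m<l_i$, and a specific choice $g=\mathbf t^{\mathbf a'}$ makes every monomial occurring either strictly smaller in the induction or reducible via Lemma \ref{f^kintermsoff}, the division by $\omega_{i,g,l_i}$ being licensed by (iii) and the base case $f=t_j^{-1}$ handled by (iv). Without a device of this kind neither (iv) nor (v) is proved, so as written your proposal establishes only (i)--(iii), and (ii)--(iii) only after correcting the pairing noted above.
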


\begin{proof}
For part i), notice that for each $\mathbf s\in\mathbb Z^n$ and $\alpha\in R^+$, the elements $\left(x_{\alpha,\mp\mathbf s}^\pm\right)^{(l)}$ for all $l\in\mathbb Z_+$ generate a subalgebra $U_\mathbb F\left(\lie g\langle n\rangle_{\alpha,\mathbf s}\right)\subset U_\mathbb F(\lie g\langle n\rangle)$, which is isomorphic to $U_\mathbb F\left(\lie{sl}_2\right)$. So, the equality $\left(x_{\alpha,\mathbf s}^-\right)^{(k)}v=0$ for $\mathbf s\in \mathbb Z^n$ and $k>\lambda(h_\alpha)$ follows because $v$ generates a finite-dimensional highest-weight module for this subalgebra, which is then isomorphic to a quotient of the Weyl module $W_{\mathbb F}(\lambda(h_\alpha))$.

For parts ii) and iii), let $b=t^{\mathbf k}\in\mathbb B^\bullet$.
Therefore by Lemma \ref{basicrel} (1.1) and i) we have
$0=(-1)^a\left(x_{\alpha_i}^+\right)^{(a)}\left(x_{\alpha_i,\mathbf k}^-\right)^{(a)}v=\Lambda_{i,b,a}v$
for $a>\lambda(h_i)$ and 
$(-1)^{\lambda(h_i)}\left(x_{\alpha_i}^+\right)^{(\lambda(h_i))}\left(x_{\alpha_i,\mathbf k}^-\right)^{(\lambda(h_i))}v=\Lambda_{i,b,\lambda(h_i)}v=\omega_{i,b,\lambda(h_i)}v.$
Since $U_\mathbb F(\lie g\langle n\rangle)v$ is a finite-dimensional $U_\mathbb F(\lie g)$-module with $\mathbb Fv$ as its highest weight space, we have $\lambda-(\lambda(h_i)+m)\alpha_i$ is not a weight of $U_\mathbb F(\lie g\langle n\rangle)v$ for any $m\in\mathbb N$. Therefore, $\left(x_{\alpha_i}^-\right)^{(m)}\left(x_{\alpha_i,\mathbf k}^-\right)^{(\lambda(h_i))}v=0$ for all $m\in\mathbb N$. On the other hand, considering the subalgebra  $U_\mathbb F\left(\lie g\langle n\rangle_{\alpha_i,\mathbf k}\right)$, we see that $\left( x_{\alpha_i,\mathbf k}^-\right)^{(\lambda(h_i))}v\neq0$. Hence, $\left( x_{\alpha_i,\mathbf k}^-\right)^{(\lambda(h_i))}v$ generates a lowest weight finite-dimensional $U_{\mathbb F}(\lie g\langle n\rangle_{\alpha_i,\mathbf 0})$-module. So $\left(x_{\alpha_i}^+\right)^{(\lambda(h_i))}\left(x_{\alpha_i,\mathbf k}^-\right)^{(\lambda(h_i))}v\neq0.$ Therefore, $\omega_{i,b,\lambda(h_i)}\neq0$.

For part iv), we will expand $\Lambda_{i,b,\lambda(h_i)}\Lambda_{i,b^{-1},r}v $ using Lemmas \ref{basicrel}(1.1) and \ref{commutrels} repeatedly:

{\small $\Lambda_{i,b,\lambda(h_i)}\Lambda_{i,b^{-1},r}v $
\begin{eqnarray*}
&=&\Lambda_{i,b,\lambda(h_i)}\omega_{i,b^{-1},r}v\\
			&=&\omega_{i,b^{-1},r}\Lambda_{i,b,\lambda(h_i)}v\\
			&=&(-1)^{\lambda(h_i)}\omega_{i,b^{-1},r}\left(x_i^+\otimes b\right)^{(\lambda(h_i))}\left(x_i^-\right)^{(\lambda(h_i))}v\\
			&=&(-1)^{\lambda(h_i)}\left(x_i^+\otimes b\right)^{(\lambda(h_i))}\left(x_i^-\right)^{(\lambda(h_i))}\omega_{i,b^{-1},r}v\\
			&=&(-1)^{\lambda(h_i)}\left(x_i^+\otimes b\right)^{(\lambda(h_i))}\left(x_i^-\right)^{(\lambda(h_i))}\Lambda_{i,b^{-1},r}v\\
			&=&(-1)^{\lambda(h_i)}(-1)^r\left(x_i^+\otimes b\right)^{(\lambda(h_i))}\left(x_i^-\right)^{(\lambda(h_i))}\left(x_i^+\right)^{(r)}\left(x_i^-\otimes b^{-1}\right)^{(r)}v\\
			&=&(-1)^{\lambda(h_i)+r}\left(x_i^+\otimes b\right)^{(\lambda(h_i))}\sum_{m=0}^r\left(x_i^+\right)^{(r-m)}\binom{-h_i-\lambda(h_i)-r+2m}{m}\left(x_i^-\right)^{(\lambda(h_i)-m)}\left(x_i^-\otimes b^{-1}\right)^{(r)}v\\
			&=&(-1)^{\lambda(h_i)+r}\left(x_i^+\otimes b\right)^{(\lambda(h_i))}\sum_{m=0}^{r-1}\left(x_i^+\right)^{(r-m)}\binom{-h_i-\lambda(h_i)-r+2m}{m}\left(x_i^-\right)^{(\lambda(h_i)-m)}\left(x_i^-\otimes b^{-1}\right)^{(r)}v\\
			&&+(-1)^{\lambda(h_i)+r}\left(x_i^+\otimes b\right)^{(\lambda(h_i))}\binom{-h_i-\lambda(h_i)+r}{r}\left(x_i^-\right)^{(\lambda(h_i)-r)}\left(x_i^-\otimes b^{-1}\right)^{(r)}v\\
			&=& (-1)^{\lambda(h_i)+r}\left(x_i^+\otimes b\right)^{(\lambda(h_i))}\binom{-h_i-\lambda(h_i)+r}{r}\left(x_i^-\right)^{(\lambda(h_i)-r)}\left(x_i^-\otimes b^{-1}\right)^{(r)}v \hspace{3.7cm} (\star)\\
			&=&(-1)^{\lambda(h_i)+r}\left(x_i^+\otimes b\right)^{(\lambda(h_i))}\left(x_i^-\right)^{(\lambda(h_i)-r)}\binom{-h_i+\lambda(h_i)-r}{r}\left(x_i^-\otimes b^{-1}\right)^{(r)}v \hspace{3.5cm} (\star\star)\\
			&=&(-1)^{\lambda(h_i)+r}\left(x_i^+\otimes b\right)^{(\lambda(h_i))}\left(x_i^-\right)^{(\lambda(h_i)-r)}\left(x_i^-\otimes b^{-1}\right)^{(r)}\binom{-h_i+\lambda(h_i)+r}{r}v \\
			&=&(-1)^{\lambda(h_i)+r}\left(x_i^+\otimes b\right)^{(\lambda(h_i))}\left(x_i^-\otimes b^{-1}\right)^{(r)}\left(x_i^-\right)^{(\lambda(h_i)-r)}v\\
			&=&(-1)^{\lambda(h_i)+r}\sum_{m=0}^r\left(x_i^-\otimes b^{-1}\right)^{(r-m)}\binom{h_i-r-\lambda(h_i)+2m}{m}\left(x_i^+\otimes b\right)^{(\lambda(h_i)-m)}\left(x_i^-\right)^{(\lambda(h_i)-r)}v\\
			&=&(-1)^{\lambda(h_i)-r}\binom{h_i+r-\lambda(h_i)}{r}\left(x_i^+\otimes b\right)^{(\lambda(h_i)-r)}\left(x_i^-\right)^{(\lambda(h_i)-r)}v\\\
            &=&\binom{h_i+r-\lambda(h_i)}{r}\Lambda_{i,b,\lambda(h_i)-r}v  \\
			&=&\Lambda_{i,b,\lambda(h_i)-r}v,
\end{eqnarray*}}
where we used that $\binom{h_i+r-\lambda(h_i)}{r}$  and $\Lambda_{i,b,\lambda(h_i)-r}$ commute, $\binom{h_i+r-\lambda(h_i)}{r}v=v$, $ (-1)^{\lambda(h_i)+r}=(-1)^{\lambda(h_i)-r}$,  $(\star\star)$ is a direct application of Lemma \ref{commutrels}\eqref{commutrels2}, and $(\star)$ follows from the fact that $\lambda-(\lambda(h_i)-m+r)\alpha_i$ is not a weight of $V$ (by Theorem \ref{t:rh}(c)), which implies that $\left(x_i^-\right)^{(\lambda(h_i)-m)}\left(x_i^-\otimes b^{-1}\right)^{(r)}v=0$ for each $m\in \{0,\dots,r-1\}$.

Finally, for part v), suppose $f=\mathbf t^\mathbf a$ with $\mathbf a=(a_1,\dots,a_n)\in\mathbb Z^n$. We proceed by induction on $\alpha_\mathbf a=\sum_{i=1}^{n}|a_k|$ and $r$. If $\alpha_{\mathbf a}=1$, then $f=t_j^{\pm1}$ and we are done after possibly using item iv) in the $f=t_j^{-1}$ case. Now assume that $\alpha_\mathbf a>1$, then we express the action of $\Lambda_{i,f,r}$ in terms of the action of a sum of products of elements of the form $\Lambda_{i,g,s}$ with $g=\mathbf t^\mathbf b$ such that $\alpha_\mathbf b<\alpha_\mathbf a$ and $0\le s\le \lambda(h_i)$. This is done through the formula \eqref{aux2} below:
let $r\in\mathbb N$, $l_i=\lambda(h_i)$, $r\leq l_i$, and $g\in \mathbb B\langle n \rangle$. By Lemma \ref{basicrel}, we have
\begin{eqnarray*}
0&=&(-1)^{l_i}(x^+_{i}\otimes g)^{(l_i)}(x^-_{i}\otimes 1)^{(l_i+r)}v\\
&=&\sum_{j=0}^{l_i}\left(\left(\sum_{m=0}^\infty(x^-_i \otimes g^m)u^{m+1}\right)^{(r)} \right)_{l_i+r-j} \Lambda_{i,g,j}v\\
&=& \omega_{i,g,l_i} (x^-_i \otimes 1)^{(r)}v+
\sum_{j=0}^{l_i-1} \omega_{i,g,j} Y_jv,
\end{eqnarray*}
where $Y_j$ is the sum of monomials $\left(x^-_i \otimes g^{0}\right)^{(r_0)}\dots\left(x^-_i \otimes g^{l_i}\right)^{(r_{l_i})}$ such that $\sum_s r_s = r$, $\sum_s sr_s=l_i-j$. Now, by acting with $\left(x^+_i \otimes f\right)^{(r)}$, we obtain 
\begin{eqnarray*} 
0&=&(-1)^r\omega_{i,g,l_i}\left(x^+_i\otimes f\right)^{(r)}\left(x^-_i\otimes 1\right)^{(r)}v+(-1)^r\sum_{j=0}^{l_i-1} \omega_{i,g,j}\left(x^+_i \otimes f\right)^{(r)} Y_jv\\
   & =& \omega_{i,g,l_i}\omega_{i,f,r}v+\sum_{j=0}^{l_i-1} \omega_{i,g,j}H_jv
\end{eqnarray*}
where $H_j$ is a sum of monomials of the form $\Lambda_{i,g^{s_1}f^{c_1},b_1}\dots\Lambda_{i,g^{s_m}f^{c_m},b_m}$ with $m,b_k\in\mathbb N$ and $s_k,c_k\in\mathbb Z_+$ such that $\sum s_kb_k=l_i$, $\sum c_kb_k=r$, $s_kc_k\neq0$, and  $b_k<l_i$ for all $k\in\{1,\ldots,m\}$. In particular, taking $g=\mathbf t^{\mathbf a'}$ with $\mathbf a'=(a_1',\dots,a_n')$ defined by
$$a_i'=\begin{cases}
-a_i, &\text{ if } |a_i|< \alpha_\mathbf a ,\\
-a_i+sign(a_i)1, &\text{ if } |a_i|=\alpha_\mathbf a, \\
\end{cases}$$
we get
\begin{equation} \label{aux2}
\omega_{i,f,r}v= -(\omega_{i,g,l_i})^{-1}\sum_{j=0}^{l_i-1}\omega_{i,g,j}H_jv
\end{equation}
where $g$ and any element appearing in $H_j$ either fit the induction hypothesis or can be handled using Lemma \ref{f^kintermsoff}.
\end{proof}

Given a finite-dimensional module as in Proposition \ref{fdprop}, in light of parts ii) and iii), we see that $\omega_{i, b,r}$ can be non zero only on a finite set of $r\in\mathbb N$. Further, from part iv), 
we see that 
the action of $\Lambda_{i, b^{-1},r}$ depends on the action of $\Lambda_{i, b,k}$ $(k>0)$ for each $ b\in \mathbb B^\bullet$.  From part v), we can recover $\omega_{i,f,k}$, for all $i\in I$, $f\in\mathbb B^\bullet$, and $k\in\mathbb N$, from the action of $\omega_{i,t_j,r}$, for $1\le j\le n$ and $r\in\mathbb N$. In particular, by setting
$$\pmb\omega_{i,j}(u) = 1+\sum_{r=1}^{\lambda(h_i)} \omega_{i, t_j, r}u^r,$$ 
for all $i\in I$ and $j\in \{1,\dots,n\}$, we encode all information about the action of $U_\mathbb F(\lie h\langle n\rangle)$ (besides those related to $U_\mathbb F(\lie h)$) and we have
$$\Lambda_{i,t_j}(u)v = \pmb\omega_{i,j}(u)v.$$

\begin{rem}\label{Drinf}
Each sequence of polynomials $(\pmb\omega_{i,j})_{i\in I}^{1\le j\le n}$ for each $j\in \{1,\dots, n\}$ corresponds to a Drinfeld polynomial in the variable $t_j$ of the $\ell$-highest-weight module generated by $v$ in the sense of \cite[Section 3]{JM1}. 
\end{rem}

Inspired by this remark, the element $(\pmb\omega_{i,j})_{i\in I}^{1\le j\le n}$ will be called the \textit{$n$-Drinfeld polynomial of the highest-$\ell$-weight $U_\mathbb F(\lie g\langle n\rangle)$-module generated by $v$}.

\subsubsection{$\ell$-weight lattice} \label{lwtlattice}

We denote by $\mathcal P_\mathbb F^+$ the set consisting of all sequences $\pmb\omega = \left(\pmb\omega_{i,j}\right)_{i\in I}^{1\le j\le n}$ where each $\pmb\omega_{i,j}$ is a polynomial in $\mathbb F[u]$ with constant term $1$ and, for each $i\in I$, $\deg(\pmb\omega_{i,j})=\deg(\pmb\omega_{i,k})$ for all $1\le j,k\le n$.  Endowed with coordinatewise polynomial multiplication, $\cal P_\mathbb F^+$ is a monoid. We also denote by $\cal P_\mathbb F$ the multiplicative group associated to $\mathcal P_\mathbb F^+$ which will be referred to as the $\ell$-weight lattice associated to $\lie g$.

Let $\wt:\cal P_\mathbb F \to P^+$ be the unique group homomorphism such that $\pmb\omega = (\pmb\omega_{i,j})_{i\in I}^{1\le j \le n} \mapsto \wt(\pmb\omega) = \sum_{i\in I} \deg(\pmb\omega_{i,j})\omega_i$. 

The abelian group $\cal P_\mathbb F$ can be identified with a subgroup of the monoid of $(I\times \{1,\dots,n\})$-tuples of formal power series with coefficients in $\mathbb F$ by identifying the rational function $(1-\alpha u)^{-1}$ with the corresponding geometric formal power series $\sum_{n\geq0} (\alpha u)^n$ for all $\alpha \in \mathbb F$.  This allows us to define an inclusion $\cal P_\mathbb F \hookrightarrow U_\mathbb F(\lie h\langle n\rangle)^*$. Indeed, if $\pmb\omega=(\pmb\omega_{i,j})_{i\in I}^{1\le j\le n}\in\cal P_\mathbb F$ is such that $\pmb\omega_{i,j}(u) = \sum_{r\geq 0} \omega_{i,j,r} u^r \in \cal P_\mathbb F$,  set 
\begin{gather*}
\pmb\omega\left(\binom{h_i}{k}\right) = \binom{\wt(\pmb\omega)(h_i)}{k}, \quad \pmb\omega(\Lambda_{i,j,r})=\omega_{i,j,r}, \quad\text{for all}\quad i\in I,\  j\in \{1,\dots,n\},k,r\in\mathbb Z_+,\\ \text{and}\quad
\pmb\omega(xy)=\pmb\omega(x)\pmb\omega(y), \quad\text{for all}\quad x,y\in U_\mathbb F(\lie h\langle n\rangle).
\end{gather*}

\
 
The next theorem states some standard properties on the category of  $U_\mathbb F(\lie g\langle n\rangle)$-modules.

\begin{thm}\label{multiloop1} Let $V$ be a $U_\mathbb F(\lie g\langle n\rangle)$-module.
\begin{enumerate}
\item If $V$ is finite-dimensional, then $V$ is an $\ell$-weight module. Moreover, if $V$ is finite-dimensional and irreducible, then $V$ is a highest-$\ell$-weight module whose highest $\ell$-weight lies in $\cal P_\mathbb F^+$.
\item \label{multiloop2} If $V$ is a highest-$\ell$-weight module of highest $\ell$-weight $\pmb\omega\in\cal P_\mathbb F^+$, then $\dim V_{\pmb\omega}=1$ and $V_{\mu}\ne 0$ only if $w_0\wt(\pmb\omega)\leq \mu\le \wt(\pmb\omega)$. Moreover, $V$  has a unique maximal proper submodule and, hence, also a unique irreducible quotient. In particular, $V$ is indecomposable.
\end{enumerate}
\end{thm}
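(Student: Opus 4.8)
The plan is to follow the classical template for $U_\mathbb F(\lie g)$ recorded in Theorem~\ref{t:rh}, feeding the loop-directional information in through Proposition~\ref{fdprop}, and using throughout that $\lie h\langle n\rangle$ is abelian, so $U_\mathbb F(\lie h\langle n\rangle)$ is a commutative $\mathbb F$-algebra, and that each $\binom{h_i}{k}$ acts diagonally on the $U_\mathbb F(\lie g)$-weight spaces (so every simultaneous generalized $U_\mathbb F(\lie h\langle n\rangle)$-eigenvector is automatically an honest weight vector). For part~(1): since $\mathbb F$ is algebraically closed and $\dim_\mathbb F V<\infty$, the commutative algebra $U_\mathbb F(\lie h\langle n\rangle)$ gives a simultaneous generalized eigenspace decomposition $V=\bigoplus_\xi V_\xi$ over characters $\xi:U_\mathbb F(\lie h\langle n\rangle)\to\mathbb F$, and each occurring $\xi$ restricts on $U_\mathbb F(\lie h)$ to a weight in $\wt(V)\subseteq P$ by Theorem~\ref{t:rh}(a); the substance is to show each such $\xi$ lies in $\mathcal P_\mathbb F$. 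I would reduce this to the highest-$\ell$-weight situation: if $V$ is irreducible, choose a maximal weight $\mu\in\wt(V)$ — it is dominant, since otherwise $s_i\mu>\mu$ would be a weight by $\dim V_\mu=\dim V_{s_i\mu}$, and $U_\mathbb F(\lie n^+\langle n\rangle)^0V_\mu=0$ by maximality — and pick a common eigenvector $0\ne v\in V_\mu$ of $U_\mathbb F(\lie h\langle n\rangle)$, so that $v$ is a highest-$\ell$-weight vector and $V=U_\mathbb F(\lie g\langle n\rangle)v$ by irreducibility. Proposition~\ref{fdprop}, parts ii)--v), together with Lemma~\ref{f^kintermsoff}, then show that the $\ell$-weight of $v$ is encoded by polynomials $\pmb\omega_{i,j}(u)=1+\sum_{r\ge1}\omega_{i,t_j,r}u^r$ with $\Lambda_{i,t_j}(u)v=\pmb\omega_{i,j}(u)v$, which by parts ii) and iii) have degree exactly $\mu(h_i)$ with nonzero leading coefficient, uniformly in $j$; hence it lies in $\mathcal P_\mathbb F^+$, which is the second assertion of~(1). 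The first assertion then follows by passing to a composition series, since $V\mapsto V_\xi$ is an exact functor, so the occurring $\ell$-weights of $V$ are among those of its finite-dimensional irreducible composition factors, each treated as above.

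For part~(2), let $v$ be a generating highest-$\ell$-weight vector of $\ell$-weight $\pmb\omega$ and put $\lambda=\wt(\pmb\omega)\in P^+$. From the PBW decomposition $U_\mathbb F(\lie g\langle n\rangle)=U_\mathbb F(\lie n^-\langle n\rangle)U_\mathbb F(\lie h\langle n\rangle)U_\mathbb F(\lie n^+\langle n\rangle)$ and the conditions $U_\mathbb F(\lie h\langle n\rangle)v=\mathbb Fv$, $U_\mathbb F(\lie n^+\langle n\rangle)^0v=0$ defining a highest-$\ell$-weight vector, one gets $V=U_\mathbb F(\lie n^-\langle n\rangle)v$. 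Since $U_\mathbb F(\lie n^-\langle n\rangle)=\mathbb F1\oplus U_\mathbb F(\lie n^-\langle n\rangle)^0$ with $U_\mathbb F(\lie n^-\langle n\rangle)^0$ strictly lowering $U_\mathbb F(\lie g)$-weights, it follows that $V_\mu\ne0$ only if $\mu\le\lambda$ and that $V_\lambda=\mathbb Fv$; as $V_{\pmb\omega}\subseteq V_\lambda$ and $v\in V_{\pmb\omega}$, this gives $\dim V_{\pmb\omega}=1$. For the complementary bound $w_0\lambda\le\mu$ I would first establish that $(x_{\alpha,\mathbf s}^-)^{(k)}v=0$ for all $\alpha\in R^+$, $\mathbf s\in\mathbb Z^n$ and $k>\lambda(h_\alpha)$ — this is where the hypothesis $\pmb\omega\in\mathcal P_\mathbb F^+$ is used, since it must be extracted from the straightening identity of Lemma~\ref{basicrel} together with Lemma~\ref{commutrels}, using that each $\Lambda_{i,t_j}(u)$ acts on $v$ by a \emph{polynomial} of degree $\lambda(h_i)$ — whence $V$, restricted to $U_\mathbb F(\lie g)$, is integrable with highest weight $\lambda$, its weight multiplicities are $\cal W$-invariant, and $w_0\lambda\le\mu$ follows exactly as in Theorem~\ref{t:rh}(b). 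Finally, every proper submodule $W$ is $U_\mathbb F(\lie h)$-stable with $W_\lambda\subseteq V_\lambda=\mathbb Fv$ and $v\notin W$ (else $W\supseteq U_\mathbb F(\lie g\langle n\rangle)v=V$), hence $W_\lambda=0$; therefore the sum $N$ of all proper submodules is a weight submodule with $N_\lambda=\sum_W W_\lambda=0\ne V_\lambda$, so $N$ is the unique maximal proper submodule, $V/N$ is the unique irreducible quotient, and $V$ is indecomposable.

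The main obstacle is the integrability input in part~(2): the vanishing $(x_{\alpha,\mathbf s}^-)^{(k)}v=0$ for $k>\lambda(h_\alpha)$. In part~i) of Proposition~\ref{fdprop} this is deduced from finite-dimensionality, but for a general highest-$\ell$-weight module it has to be read off from the polynomiality of the $\pmb\omega_{i,j}$, which requires a careful iteration of Lemma~\ref{basicrel} in the rank-one situation in the style of Garland and Chari--Pressley — the sort of computation one would isolate as a separate lemma. A secondary delicate point, in part~(1), is checking that \emph{all} $\ell$-weights of a finite-dimensional module lie in $\mathcal P_\mathbb F$, not merely the highest one of an irreducible module; this follows for irreducible modules by applying the same rank-one analysis to the vectors $(x_{\alpha,\mathbf s}^-)^{(k)}v$ (whose $\ell$-weights are "ratios" of the highest one), and then for arbitrary finite-dimensional modules by the composition-series reduction above.
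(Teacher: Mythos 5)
Your overall architecture matches the paper's (terse) proof: for part (1), the generalized-eigenspace decomposition coming from the commutativity of $U_\mathbb F(\lie h\langle n\rangle)$ over the algebraically closed field $\mathbb F$, a maximal weight giving a highest-$\ell$-weight vector when $V$ is irreducible, and Proposition \ref{fdprop} to see that the resulting highest $\ell$-weight is a tuple of polynomials of the correct degrees; for part (2), the triangular decomposition giving $V=U_\mathbb F(\lie n^-\langle n\rangle)v$, hence $\dim V_{\pmb\omega}=1$ and $\mu\le\wt(\pmb\omega)$, plus the standard sum-of-proper-submodules argument (valid here because every submodule is a weight submodule). You are in fact more explicit than the paper on why \emph{all} $\ell$-weights of a finite-dimensional module lie in $\cal P_\mathbb F$, via composition series and exactness of $V\mapsto V_\xi$; the remaining "ratios of the highest $\ell$-weight" step for irreducibles is only sketched, but the paper does not detail this either.

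The genuine gap is exactly at the point you single out as the main obstacle in part (2): your plan to deduce $(x_{\alpha,\mathbf s}^-)^{(k)}v=0$ for $k>\wt(\pmb\omega)(h_\alpha)$ "from the polynomiality of the $\pmb\omega_{i,j}$" by iterating Lemma \ref{basicrel}. This cannot succeed, because the highest-$\ell$-weight relations only yield identities of the form (element of $U_\mathbb F(\lie n^+\langle n\rangle)^0$ or of $U_\mathbb F(\lie h\langle n\rangle)$) applied to $(x_{\alpha,\mathbf s}^-)^{(k)}v$ equals something; they never force these vectors themselves to vanish. Concretely, take $\pmb\omega=\pmb\omega^{\lambda,\mathbf a}$, whose character of $U_\mathbb F(\lie h\langle n\rangle)$ is realized on the highest-weight line of the evaluation module of Proposition \ref{p:evmod}; inducing this one-dimensional module over $U_\mathbb F(\lie h\langle n\rangle)U_\mathbb F(\lie n^+\langle n\rangle)$ up to $U_\mathbb F(\lie g\langle n\rangle)$ gives, by the ordered monomial basis of Theorem \ref{gnforms}, a highest-$\ell$-weight module of highest $\ell$-weight $\pmb\omega\in\cal P_\mathbb F^+$ that is free over $U_\mathbb F(\lie n^-\langle n\rangle)$: there all $(x_{\alpha,\mathbf s}^-)^{(k)}v\ne 0$ and the weights are unbounded below, even though every $\pmb\omega_{i,j}$ is a polynomial. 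So integrability is not a consequence of polynomiality; Proposition \ref{fdprop}(i) genuinely uses finite-dimensionality. The lower bound $w_0\wt(\pmb\omega)\le\mu$ therefore requires a finiteness hypothesis on $V$ (finite-dimensionality, or at least $U_\mathbb F(\lie g)$-local finiteness), and that is how the paper argues: every vector of $V$ lies in a finite-dimensional $U_\mathbb F(\lie g)$-submodule, to which Theorem \ref{t:rh} is applied (the same device reappears for the Weyl modules in Theorem \ref{mainthm2}(4)); note the paper's own phrase tacitly presupposes this finiteness. Your remaining claims in part (2) are unaffected, but the lemma you propose to isolate is false for general highest-$\ell$-weight modules, so that step of your proof cannot be completed as written.
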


\proof In order to show (a), notice that $V$ is an $\ell$-weight module because $U_\mathbb F(\lie h\langle n \rangle)$ is commutative and this sum is just a sum of generalized eigenspaces for the action of $U_\mathbb F(\lie h\langle n\rangle)$. In case that $V$ is irreducible, the generalized eigenspaces are, in fact, eigenspaces. Further, by finite dimensionality, we must have a maximal $\ell$-weight and, then, we conclude that $V$ is a highest-$\ell$-weight module, since the $U_\mathbb F(\lie g\langle n\rangle)$-submodule generated by a highest-$\ell$-weight vector must coincide with $V$ by the irreducibility of $V$. 

For the first part of (b) it suffices to note that every vector $v \in V$ lies inside a finite-dimensional $U_\mathbb F(\lie g)$-submodule of $V$. Now all the claims
follow from the corresponding results for finite-dimensional $U_\mathbb F(\lie g)$-modules. The last part of (b) follows from standard algebraic argumentation on cyclic modules.
\endproof

\subsubsection{Evaluation modules}
We now introduce the important class of $U_\mathbb F(\lie g\langle n \rangle )$-modules
known as evaluation representations.

Given $\mathbf a=(a_1,\dots,a_n)\in (\mathbb F\setminus \{0\})^n$ and $ b=t_1^{s_1}\dots t_n^{s_n}\in \mathbb B\langle n\rangle$, we denote by $ b(\mathbf a)$ the evaluation of $b$ on $\mathbf a$, that is $ b(\mathbf a):=a_1^{s_1}\dots a_n^{s_n}$.

\begin{prop}
For $\mathbf a\in (\mathbb F\setminus \{0\})^n$, there exists a surjective algebra homomorphism ${\rm ev}_\mathbf a: U_\mathbb F(\lie g\langle n \rangle)\to U_\mathbb F(\lie g)$ mapping $(x_{\alpha}^\pm \otimes b)^{(k)}$ to $  b(\mathbf a)^{k}(x_{\alpha}^\pm)^{(k)}$ for all $\alpha \in R^+, b\in \mathbb B\langle n\rangle$. In particular, ${\rm ev}_\mathbf a(\Lambda_{\alpha,  b, r}) =(-  b(\mathbf a))^r\binom{h_\alpha}{r}$ for all $\alpha \in R^+,   b\in \mathbb B \langle n\rangle , r\in \mathbb N$.
\end{prop}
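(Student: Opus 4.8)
The plan is to construct the map $\mathrm{ev}_{\mathbf a}$ on the integral form level and then base change to $\mathbb F$. First I would observe that over $\mathbb C$ there is an obvious Lie algebra homomorphism $\lie g\langle n\rangle \to \lie g$ given by $x\otimes f \mapsto f(\mathbf a)\, x$ (evaluation at $\mathbf a$), which is surjective since each $a_k\neq 0$; composing with $\lie g\hookrightarrow U(\lie g)$ and extending by the universal property gives a surjective algebra homomorphism $U(\lie g\langle n\rangle)\to U(\lie g)$ sending $(x_\alpha^\pm\otimes b)^k \mapsto b(\mathbf a)^k (x_\alpha^\pm)^k$, hence $(x_\alpha^\pm\otimes b)^{(k)}\mapsto b(\mathbf a)^k (x_\alpha^\pm)^{(k)}$ after dividing by $k!$. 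The point is then that $b(\mathbf a)\in\mathbb Z[a_1^{\pm1},\dots,a_n^{\pm1}]$ when $\mathbf a$ is a tuple of indeterminates, so this map carries the generators $(x_\alpha^\pm\otimes b)^{(k)}$ of $U_{\mathbb Z}(\lie g\langle n\rangle)$ into $U_{\mathbb Z}(\lie g)$ after extending scalars to $\mathbb Z[a_1^{\pm1},\dots,a_n^{\pm1}]$; specializing the $a_k$ to actual elements of $\mathbb F\setminus\{0\}$ and applying $\mathbb F\otimes_{\mathbb Z}(-)$ yields the desired $\mathrm{ev}_{\mathbf a}:U_\mathbb F(\lie g\langle n\rangle)\to U_\mathbb F(\lie g)$. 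Surjectivity is immediate since the generators $(x_\alpha^\pm)^{(k)}$ of $U_\mathbb F(\lie g)$ are in the image (take $b=1$).

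Next I would verify the formula for $\mathrm{ev}_{\mathbf a}(\Lambda_{\alpha,b,r})$. Recall $\Lambda_{\alpha,b}(u)=\exp\bigl(-\sum_{s\geq1}\frac{h_\alpha\otimes b^s}{s}u^s\bigr)$. Under the evaluation homomorphism, $h_\alpha\otimes b^s = [x_\alpha^+\otimes b^s, x_\alpha^-\otimes 1]$ (or directly) maps to $b(\mathbf a)^s h_\alpha$, so the series maps to $\exp\bigl(-h_\alpha\sum_{s\geq1}\frac{(b(\mathbf a)u)^s}{s}\bigr)=\exp\bigl(h_\alpha\log(1-b(\mathbf a)u)\bigr)=(1-b(\mathbf a)u)^{h_\alpha}=\sum_{r\geq0}\binom{h_\alpha}{r}(-b(\mathbf a))^r u^r$, using the standard identity $(1-tu)^{h}=\sum_r\binom{h}{r}(-t)^ru^r$ as a formal identity that descends to each graded piece. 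Comparing coefficients of $u^r$ gives $\mathrm{ev}_{\mathbf a}(\Lambda_{\alpha,b,r})=(-b(\mathbf a))^r\binom{h_\alpha}{r}$. One must check this manipulation is legitimate over $\mathbb Z$: the coefficient $\binom{h_\alpha}{r}$ lies in $U_{\mathbb Z}(\lie g)$ by Theorem \ref{gforms}, and $(-b(\mathbf a))^r\in\mathbb Z[a_1^{\pm1},\dots,a_n^{\pm1}]$, so the computation takes place in $U_{\mathbb Z[a^{\pm1}]}(\lie g)$ before specialization.

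The only real subtlety — and the step I would be most careful about — is the passage from $\mathbb C$-algebras to $\mathbb Z$-forms: one needs that the $\mathbb C$-linear evaluation map restricts to a well-defined map $U_{\mathbb Z}(\lie g\langle n\rangle)\to \mathbb F\otimes_{\mathbb Z}U_{\mathbb Z}(\lie g)$ (or more precisely factors through after the appropriate scalar extension and specialization), which is exactly where Theorem \ref{gnforms} is used: $U_{\mathbb Z}(\lie g\langle n\rangle)$ is generated as a $\mathbb Z$-algebra by the divided powers $(x_\alpha^\pm\otimes b)^{(k)}$, and the image of each such generator is $b(\mathbf a)^k(x_\alpha^\pm)^{(k)}\in \mathbb F\otimes_{\mathbb Z}U_{\mathbb Z}(\lie g)$. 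Since a $\mathbb Z$-algebra homomorphism is determined by its values on algebra generators, and those values land in the target integral form, the map is well-defined on $U_{\mathbb Z}(\lie g\langle n\rangle)$ and hence induces $\mathrm{ev}_{\mathbf a}$ after applying $\mathbb F\otimes_{\mathbb Z}(-)$. Everything else is a routine check.
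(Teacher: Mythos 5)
Your construction is correct and is essentially the paper's own argument: adjoining the indeterminates $a_1^{\pm1},\dots,a_n^{\pm1}$ and specializing is exactly the paper's route of factoring $\mathrm{ev}_{\mathbf a}$ through the map $U_{\mathbb Z}(\lie g\langle n\rangle)\to U_{\mathbb Z}(\lie g)\otimes\mathbb Z[t_1^{\pm1},\dots,t_n^{\pm1}]$ (the $t_i$ playing the role of your $a_i$), followed by tensoring with $\mathbb F$ and evaluating at $\mathbf a$. Your exp/log computation of $\mathrm{ev}_{\mathbf a}(\Lambda_{\alpha,b,r})$, carried out rationally and then read off as an identity between elements of the integral forms, is precisely the ``direct evaluation'' the paper leaves to the reader.
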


\begin{proof} From the universal property of $U(\lie g\langle n \rangle)$, there exists a unique algebra map $U(\lie g\langle n \rangle) \to U(\lie g)\otimes \mathbb C\langle n \rangle$, which is the identity on $\lie g\langle n \rangle$. Now, notice that  this map sends $U_\mathbb Z(\lie g\langle n \rangle)$ to $U_\mathbb Z(\lie g) \otimes \mathbb Z \langle n \rangle$. Hence, by reducing it modulo $p$ (i.e., tensoring with $\mathbb F$), we obtain the map ${\rm ev}: U_\mathbb F(\lie g\langle n \rangle) \to U_\mathbb F(\lie g) \otimes \mathbb F\langle n\rangle$. The other statements of the proposition are now easily deduced by direct evaluation of $(x_{\alpha}^\pm \otimes b)^{(k)}$ and $\Lambda_{\alpha,  b, r}$ on $\mathbf a$.
\end{proof}

Given any $U_\mathbb F(\lie g)$-module $V$, let $V(\mathbf a)$ be the pull-back of $V$ by ${\rm ev}_\mathbf a$.  Given $\lambda\in P^+$ and $\mathbf{a}=(a_1,\dots,a_n) \in(\mathbb F\setminus \{0\})^n$, let $\pmb\omega^{\lambda,\mathbf a }=(\pmb\omega^{\lambda,\mathbf a  }_{i,j})$ be the element of $\cal P_\mathbb F$ defined as
\begin{equation} \label{drinfeva}
\pmb\omega^{\lambda,\mathbf a  }_{i,j}(u) = (1-  a_ju)^{\lambda(h_i)} \quad\text{for all}\quad i\in I, j\in\{1,\dots,n\}.
\end{equation}

We refer to $\pmb\omega^{\omega_k,\mathbf a }=(\pmb\omega_{i,j}^{\omega_k,\mathbf a })_{i \in I}^{1\le j\le n}$, $k\in\{1,\dots,n\}$, as a fundamental $\ell$-weight. Notice that $\cal P_\mathbb F$ is the free abelian group on the set of fundamental $\ell$-weights.
Let also $\pmb\omega\mapsto \pmb\omega^-$ be the unique group automorphism of $\cal P_\mathbb F$ mapping $\pmb\omega^{\omega_i,\mathbf a }$ to $\pmb\omega^{\omega_i,\mathbf a ^{-1}}$ for all $i\in I,\mathbf a\in(\mathbb F\setminus \{0\})^n$, where $\mathbf{a}^{-1}=(a_1^{-1},\dots,a_n^{-1})$. For notational convenience we set $\pmb\omega^+=\pmb\omega$.

\begin{prop}\label{p:evmod}
Let $\lambda\in P^+$. Then, given $\mathbf a\in(\mathbb F\setminus \{0\})^n$ and a highest-weight $U_\mathbb F(\lie g)$-module $V$ of highest-weight $\lambda$, $V(\mathbf a)$ is a highest $\ell$-weight $U_\mathbb F(\lie g\langle n\rangle)$-module with Drinfeld polynomial $\pmb\omega^{\lambda,\mathbf a} \in\mathcal P_\mathbb F^+$.
\end{prop}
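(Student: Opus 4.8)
The plan is to verify the two defining properties of a highest-$\ell$-weight module: that $V(\mathbf a)$ is generated over $U_\mathbb F(\lie g\langle n\rangle)$ by a vector annihilated by $U_\mathbb F(\lie n^+\langle n\rangle)^0$ on which $U_\mathbb F(\lie h\langle n\rangle)$ acts by a character, and then to compute that character and identify it with $\pmb\omega^{\lambda,\mathbf a}$. Let $v\in V$ be a highest-weight vector generating $V$ as a $U_\mathbb F(\lie g)$-module. Since $\mathrm{ev}_\mathbf a$ is surjective, $v$ also generates $V(\mathbf a)$ as a $U_\mathbb F(\lie g\langle n\rangle)$-module. Because $\mathrm{ev}_\mathbf a\bigl((x_\alpha^+\otimes b)^{(k)}\bigr)=b(\mathbf a)^k (x_\alpha^+)^{(k)}$ and $(x_\alpha^+)^{(k)}v=0$ for $k>0$, the vector $v$ is killed by $U_\mathbb F(\lie n^+\langle n\rangle)^0$, so $v$ is a highest-$\ell$-weight vector once we check that $U_\mathbb F(\lie h\langle n\rangle)v=\mathbb F v$.

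Next I would compute the action of $U_\mathbb F(\lie h\langle n\rangle)$ on $v$. The algebra $U_\mathbb F(\lie h\langle n\rangle)$ is generated (as an integral form reduced mod $p$) by the elements $\binom{h_i}{k}$ and the $\Lambda_{i,b,r}$ for $b\in\mathbb B\langle n\rangle$, $b\ne 1$. For the first family, $\binom{h_i}{k}v=\binom{\lambda(h_i)}{k}v$ since $v$ has weight $\lambda$. For the second, the proposition's own formula gives $\mathrm{ev}_\mathbf a(\Lambda_{\alpha,b,r})=(-b(\mathbf a))^r\binom{h_\alpha}{r}$, hence $\Lambda_{i,b,r}v=(-b(\mathbf a))^r\binom{\lambda(h_i)}{r}v$. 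Therefore $U_\mathbb F(\lie h\langle n\rangle)v=\mathbb F v$, so $V(\mathbf a)$ is a highest-$\ell$-weight module. It remains to package the $\ell$-weight: for $b=t_j$ we get $\Lambda_{i,t_j,r}v=(-a_j)^r\binom{\lambda(h_i)}{r}v$, so the generating series is
\begin{equation*}
\Lambda_{i,t_j}(u)v=\sum_{r\ge 0}(-a_j)^r\binom{\lambda(h_i)}{r}u^r\, v=(1-a_ju)^{\lambda(h_i)}v=\pmb\omega^{\lambda,\mathbf a}_{i,j}(u)v,
\end{equation*}
using the binomial identity $\sum_{r\ge 0}\binom{N}{r}x^r=(1+x)^N$ valid for $N\in\mathbb Z_+$ (which reduces to a finite sum). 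Since $\lambda\in P^+$, each $\lambda(h_i)\ge 0$ and $\deg\pmb\omega^{\lambda,\mathbf a}_{i,j}=\lambda(h_i)$ is independent of $j$, so $\pmb\omega^{\lambda,\mathbf a}\in\mathcal P_\mathbb F^+$; this also matches $\wt(\pmb\omega^{\lambda,\mathbf a})=\sum_i\lambda(h_i)\omega_i=\lambda$, consistent with Remark~\ref{Drinf} and Proposition~\ref{fdprop}. Hence by Remark~\ref{Drinf}, the $n$-Drinfeld polynomial of the highest-$\ell$-weight module generated by $v$ is exactly $\pmb\omega^{\lambda,\mathbf a}$.

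I expect no serious obstacle here; the only point requiring a little care is confirming that the generating-series computation for $\Lambda_{i,t_j}(u)$ genuinely determines the full character of $U_\mathbb F(\lie h\langle n\rangle)$ on $v$ — but this is immediate since $V(\mathbf a)$ is one-dimensional in the top $\ell$-weight space and every $\Lambda_{i,b,r}$ with $b\in\mathbb B^\bullet$ (and then, via Lemma~\ref{f^kintermsoff}, every $b\in\mathbb B\langle n\rangle$) acts by the scalar dictated by $\mathrm{ev}_\mathbf a$, which is consistent with the recursions of Proposition~\ref{fdprop}. One should also note that $V(\mathbf a)$ being highest-$\ell$-weight in the sense of the definition does not require $V$ to be irreducible, only highest-weight, which is exactly the hypothesis; the uniqueness of the top $\ell$-weight space dimension then follows from Theorem~\ref{multiloop1}\eqref{multiloop2}.
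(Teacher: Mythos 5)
Your proposal is correct and follows essentially the same route as the paper: both reduce the claim to computing $\Lambda_{i,t_j}(u)v=(1-a_ju)^{\lambda(h_i)}v$ through the evaluation homomorphism and matching this with $\pmb\omega^{\lambda,\mathbf a}_{i,j}$, the only cosmetic difference being that you invoke the coefficient formula $\mathrm{ev}_\mathbf a(\Lambda_{i,b,r})=(-b(\mathbf a))^r\binom{h_i}{r}$ plus the binomial identity, while the paper redoes the $\exp$/$\log$ series manipulation directly. Your explicit verification that $v$ generates $V(\mathbf a)$, is killed by $U_\mathbb F(\lie n^+\langle n\rangle)^0$, and spans an eigenline for $U_\mathbb F(\lie h\langle n\rangle)$ fills in routine steps the paper leaves implicit.
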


\proof Let $v$ be a highest-weight vector of $V$. To calculate the action of $\Lambda_{i, t_j}(u)$ under the pull-back of ${\rm ev}_\mathbf a$, we use the series which represents the logarithm, evaluate on $\mathbf a$ and compare with \eqref{drinfeva}:
$$\begin{array}{rcl}
\Lambda_{i, t_j}(u)\ v  & = & \exp \left(-\sum_{s=1}^{\infty}\frac{h_{i}\otimes t_j^{s}}{s} u^{s}\right) \ v\\
&=& 
\exp \left(-\lambda(h_i)\sum_{s=1}^{\infty}\frac{t_j^{s}}{s} u^{s}\right) \ v\\
&=&
\exp \left(\lambda(h_i)\ln(1-t_ju)\right) \ v\\
&=&(1-t_ju)^{\lambda(h_i)}  \ v\\
&=&(1-a_ju)^{\lambda(h_i)}  \ v \\
& = & (\pmb\omega^{\lambda,\mathbf a }_{i,j})(u) \ v.
\end{array}$$

\endproof

\subsubsection{Weyl modules}

We now construct an important family of objects in the category of finite-dimensional $U_\mathbb F(\lie g \langle n \rangle )$-modules and state some of their relevant properties.

\begin{thm}\label{mainthm2} Let $\pmb\omega = (\pmb\omega_{i,j})_{i\in I}^{1\le j \le n} \in \mathcal P_\mathbb F^+$ and $n\in\mathbb N$.
	\begin{enumerate}
       \item The $U_\mathbb F(\lie g \langle n \rangle )$-module, $W^n_\mathbb F(\pmb\omega)$, generated by an element $v_\omega$ with defining relations
\begin{gather*}
	U_\mathbb F(\lie n^+\langle n \rangle)^0v_{\pmb\omega}= (h- \pmb\omega(h))v_{\pmb\omega}=(x_\alpha^-)^{(k)}v_{\pmb\omega}=0.
	\end{gather*}
for all $h\in U(\lie h\langle n\rangle),\ \alpha\in R^+$, and 
$k\in\mathbb N$ with $k>\text{wt}(\pmb\omega)(h_\alpha)$ is finite--dimensional.    
    
\item Every finite-dimensional highest-$\ell$-weight-module of highest $\ell$-weight $\pmb\omega$ is a quotient of $W^n_\mathbb F(\pmb\omega)$.
  
\item If $V$ is finite-dimensional and irreducible, then there exists a unique $\pmb\omega\in \cal P_\mathbb F^+$ such that $V$ is isomorphic to the irreducible quotient $V^n_\mathbb F(\pmb\omega)$ of $W^n_\mathbb F(\pmb\omega)$. In particular, the isomorphism classes of finite-dimensional simple $U_\mathbb F(\lie g\langle n\rangle)$-modules is in bijective correspondence with $\cal P^+_\mathbb F$.

\item Let $\mu\in P$. If $W_\mathbb F^n(\pmb\omega)_\mu \ne 0$, then $W_\mathbb F^n(\pmb\omega)_{w\mu} \ne 0$ for all $w\in\mathcal W$, i.e., $$\wt(W^n_\mathbb F(\pmb\omega)) \subseteq \wt(W^n_\mathbb F(\wt({\pmb\omega}))).$$ In particular, $W_\mathbb F^n(\pmb\omega)_\mu \ne 0$ only if $w_0\wt({\pmb\omega}) \leq w\mu \leq \wt({\pmb\omega})$, for all $w \in \cal W$. 

	\end{enumerate}
\end{thm}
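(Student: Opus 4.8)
The plan is to establish the four parts in the order (2), (1), (4), (3), since each relies on the previous ones. For part (2), I would start with a finite-dimensional highest-$\ell$-weight module $V$ of highest $\ell$-weight $\pmb\omega$ with cyclic generator $v$. The action of $U_\mathbb F(\lie h\langle n\rangle)$ on $v$ is determined by the scalars $\omega_{i,t_j,r}$ by Proposition \ref{fdprop}(v) together with Lemma \ref{f^kintermsoff} and part (iv); in particular these scalars recover the full sequence $\pmb\omega$, and part (i) of that proposition shows $(x_\alpha^-)^{(k)}v=0$ for $k>\wt(\pmb\omega)(h_\alpha)$. Hence $v$ satisfies all the defining relations of $v_{\pmb\omega}$, so there is a surjection $W^n_\mathbb F(\pmb\omega)\twoheadrightarrow V$, provided $W^n_\mathbb F(\pmb\omega)$ is nonzero. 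Nonvanishing will follow once part (1) is in hand, since e.g. an evaluation module (Proposition \ref{p:evmod}) with Drinfeld polynomial $\pmb\omega^{\wt(\pmb\omega),\mathbf a}$ when $\pmb\omega$ has that special form, and in general a tensor product of such, realizes $\pmb\omega$ as the $\ell$-weight of some nonzero finite-dimensional highest-$\ell$-weight module, which then surjects onto the would-be $W^n_\mathbb F(\pmb\omega)$ — but strictly speaking I only need that $W^n_\mathbb F(\pmb\omega)\ne 0$, and that is immediate because $v_{\pmb\omega}\ne 0$ in the cyclic module defined by those relations once we check the relations are consistent, i.e., that the defining left ideal is proper. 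Consistency is exactly where part (1) does the work.

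For part (1), the heart of the matter, I would use the triangular decomposition $U_\mathbb F(\lie g\langle n\rangle)=U_\mathbb F(\lie n^-\langle n\rangle)U_\mathbb F(\lie h\langle n\rangle)U_\mathbb F(\lie n^+\langle n\rangle)$ to see that $W^n_\mathbb F(\pmb\omega)=U_\mathbb F(\lie n^-\langle n\rangle)v_{\pmb\omega}$, and then bound the dimension by showing $W^n_\mathbb F(\pmb\omega)_\mu=0$ for all but finitely many $\mu$ and each such weight space is finite-dimensional. The key tool is Lemma \ref{basicrel}: acting on $v_{\pmb\omega}$ by $(x_\alpha^+\otimes a)^{(r)}(x_\alpha^-\otimes b)^{(s)}$ and using $U_\mathbb F(\lie n^+\langle n\rangle)^0v_{\pmb\omega}=0$, one rewrites a product $(x_\alpha^-\otimes b)^{(s)}v_{\pmb\omega}$ in terms of lower-degree monomials whenever $s$ is large, producing "straightening" relations in $U_\mathbb F(\lie n^-\langle n\rangle)v_{\pmb\omega}$. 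Combined with the $\lie{sl}_2$-relations from Lemma \ref{commutrels} and the fact that $(x_\alpha^-\otimes b)^{(k)}v_{\pmb\omega}=0$ for $k>\wt(\pmb\omega)(h_\alpha)$, one shows that for each positive root $\alpha$ and each $b\in\mathbb B\langle n\rangle$ only boundedly many powers of $x_\alpha^-\otimes b$ can appear in a PBW monomial acting nontrivially on $v_{\pmb\omega}$. A filtration/induction argument on the partial order of weights (the poset of $\mu\le\wt(\pmb\omega)$ with $\wt(\pmb\omega)-\mu\in Q^+$) together with these relations then bounds each weight space, and since $W^n_\mathbb F(\pmb\omega)$ as a $U_\mathbb F(\lie g)$-module has weights in a finite saturated set, finite-dimensionality follows. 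This is the step I expect to be the main obstacle: making the straightening argument uniform over all $b\in\mathbb B\langle n\rangle$ (there are infinitely many) and over all positive roots, and organizing the induction so that the bound on each weight space is genuinely finite. I anticipate this is handled in Section \ref{mainproofs}, following the template of \cite[Section 3]{JM1} for $n=1$.

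With (1) and (2) established, part (4) follows by a standard argument: for any $\mathbf a\in(\mathbb F\setminus\{0\})^n$ the evaluation module $W_\mathbb F(\wt(\pmb\omega))(\mathbf a)$ is a finite-dimensional highest-$\ell$-weight module with Drinfeld polynomial $\pmb\omega^{\wt(\pmb\omega),\mathbf a}$ of the same underlying weight $\wt(\pmb\omega)$, and $W^n_\mathbb F(\wt(\pmb\omega))$ surjects onto it by (2) (after identifying $\wt(\pmb\omega)\in P^+$ with the corresponding element of $\cal P_\mathbb F^+$); conversely, as a $U_\mathbb F(\lie g)$-module $W^n_\mathbb F(\pmb\omega)$ is a quotient of $W_\mathbb F(\wt(\pmb\omega))$ by Theorem \ref{t:rh}\eqref{t:rh.c}, and by Theorem \ref{t:rh}(a) its character is $\cal W$-invariant, giving $\wt(W^n_\mathbb F(\pmb\omega))\subseteq\wt(W_\mathbb F(\wt(\pmb\omega)))=\wt(W^n_\mathbb F(\wt(\pmb\omega)))$, whence the displayed inclusion and, via Theorem \ref{t:chWg}, the bound $w_0\wt(\pmb\omega)\le w\mu\le\wt(\pmb\omega)$. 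Finally, part (3): by Theorem \ref{multiloop1} every finite-dimensional irreducible $V$ is highest-$\ell$-weight with highest $\ell$-weight $\pmb\omega\in\cal P_\mathbb F^+$, hence by (2) a quotient of $W^n_\mathbb F(\pmb\omega)$, which by Theorem \ref{multiloop1}\eqref{multiloop2} has a unique irreducible quotient $V^n_\mathbb F(\pmb\omega)$; so $V\cong V^n_\mathbb F(\pmb\omega)$. Distinct $\pmb\omega$ give non-isomorphic $V^n_\mathbb F(\pmb\omega)$ because the highest $\ell$-weight is recovered from $V^n_\mathbb F(\pmb\omega)$ as the $\ell$-weight of its (one-dimensional) highest $\ell$-weight space, and surjectivity of $\pmb\omega\mapsto[V^n_\mathbb F(\pmb\omega)]$ is what we just proved, giving the asserted bijection with $\cal P_\mathbb F^+$.
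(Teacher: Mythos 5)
Your treatments of parts (2) and (3) coincide with the paper's (both are short consequences of Proposition \ref{fdprop} and Theorem \ref{multiloop1}), but part (1) --- the only substantial assertion --- is not actually proved in your proposal. You name the right tools (Lemma \ref{basicrel}, Lemma \ref{commutrels}, induction over weights), but the step that makes the spanning set finite is precisely the one you defer to Section \ref{mainproofs} and \cite{JM1}: bounding, for each \emph{fixed} $b$, the powers of $x^-_\alpha\otimes b$ that can act nontrivially does not give finite--dimensionality, because $b$ runs over the infinite set $\mathbb B\langle n\rangle$. One must show that $W^n_\mathbb F(\pmb\omega)$ is spanned by monomials $\left(x_{\beta_1,\mathbf s_1}^-\right)^{(k_1)}\cdots\left(x_{\beta_m,\mathbf s_m}^-\right)^{(k_m)}v_{\pmb\omega}$ with every $\mathbf s_j\in\mathbb Z_+^n$ and $\max(\mathbf s_j)<\wt(\pmb\omega)(h_{\beta_j})$, i.e.\ reduce arbitrary Laurent exponents into a finite box. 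The paper does this by a double induction on the degree $d=\sum_j k_j$ and the maximal exponent $e$, and, when $e=d$, a further induction on the offending coordinate $s_i$ (on $s_i$ when $s_i\ge\wt(\pmb\omega)(h_\beta)$, on $|s_i|$ when $s_i<0$), using relation \eqref{basicrelv}, which comes from Lemma \ref{basicrel} applied to the pair $a=\mathbf t^{-\mathbf r}$, $b=\mathbf t^{\mathbf r+\mathbf e_i}$ with $k=e(\wt(\pmb\omega)(h_\beta)+1)$ and $k-\ell=e$, together with a careful choice of $\mathbf r$ ($\mathbf r=\mathbf s-(\wt(\pmb\omega)(h_\beta)+1)\mathbf e_i$ in the first case, and $\mathbf r=\mathbf s-(n_0+1)\mathbf e_i$ with $n_0$ minimal such that $\Lambda_{\beta,\mathbf e_i,\ell-en_0}v_{\pmb\omega}\ne0$ in the second, so that the leading coefficient is invertible). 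None of this mechanism --- in particular the treatment of negative exponents, which has no analogue in the multicurrent case --- appears in your plan, so part (1) remains a genuine gap.

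Two further problems. Your sketch of (1) uses ``$(x_\alpha^-\otimes b)^{(k)}v_{\pmb\omega}=0$ for $k>\wt(\pmb\omega)(h_\alpha)$'', but for $b\ne1$ this is not a defining relation: it holds because $\wt(\pmb\omega)-k\alpha$ is not a weight of the module, i.e.\ it is a consequence of part (4). Since you prove (4) only after (1), invoking the finite--dimensionality from (1) and Theorem \ref{t:rh}(a), your ordering is circular as written; the paper avoids this by proving (4) \emph{before} the spanning argument, using only that every vector of $W^n_\mathbb F(\pmb\omega)$ lies in a finite--dimensional $U_\mathbb F(\lie g)$-submodule, which follows from the defining relations alone. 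Second, your claim in (4) that $W^n_\mathbb F(\pmb\omega)$, viewed as a $U_\mathbb F(\lie g)$-module, is a quotient of $W_\mathbb F(\wt(\pmb\omega))$ is false: already for $\lie g=\lie{sl}_2$, $n=1$, $\wt(\pmb\omega)=2\omega$ and $\pmb\omega$ with two distinct roots, the tensor product of two evaluation modules is a $4$-dimensional highest-$\ell$-weight quotient of $W^1_\mathbb F(\pmb\omega)$, while $W_\mathbb F(2\omega)$ is $3$-dimensional; only the subspace $U_\mathbb F(\lie g)v_{\pmb\omega}$ is a quotient of $W_\mathbb F(\wt(\pmb\omega))$. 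The weight-set statement you want follows instead from $\cal W$-invariance of the character together with the bound $\mu\le\wt(\pmb\omega)$ and Theorem \ref{t:rh}\eqref{t:chWg}.
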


We call by \textit{local Weyl modules} the $U_\mathbb F(\lie g \langle n \rangle )$-module $W_\mathbb F^n(\pmb\omega)$ introduced in item (1).
This theorem establishes that the local Weyl modules are universal finite--dimensional highest-$\ell$-weight modules in the category of finite-dimensional $U_\mathbb F(\lie g \langle n \rangle)$-modules.

\begin{proof} (1) will be proven in Subsection \ref{mainthm2proof}. (2) is immediate from the definition of $W_\mathbb F^n(\pmb\omega)$ and Proposition \ref{fdprop}. (3) follows from Theorem \ref{multiloop1} and item (2).

For part (4), by the defining relations of $W_\mathbb F^n(\pmb\omega)$ we conclude that each vector $w\in W_\mathbb F^n(\pmb\omega)$ lies inside a finite-dimensional $U_\mathbb F(\lie g)$-submodule of $W_\mathbb 	F^n(\pmb\omega)$. Therefore, the statement follows from the well established corresponding result for finite dimensional $U_\mathbb F(\lie g)$-modules given in Section \ref{sec:rev.g}.

\end{proof}

\subsection{Finite-dimensional modules for hyper multicurrent algebras \texorpdfstring{$U_{\mathbb F}(\lie g[n])$}{}}\label{ssfdirreps}

The results of this subsection generalizes for hyper multicurrent algebras ($n>1$) some of the results of hyper current algebras ($n=1$) from \cite[Section 3.3 ]{bmm}, which was mainly motivated by \cite{CL}. The strategy is similar to \cite{CG}.

Let $\mathcal G_\mathbb F$ be the category whose objects are finite-dimensional $\mathbb Z_+^n$-graded $U_{\mathbb F}(\lie g[n])$-modules and the morphisms are $\mathbb Z_+^n$ graded maps of $U_{\mathbb F}(\lie g[n])$-modules. More precisely, if $V\in$ Ob $\mathcal G_\mathbb F$ then $\dim_\mathbb F V<\infty$ and 
$$V=\bigoplus_{\mathbf{r}\in\mathbb Z_+^n}V[\mathbf{r}]$$
where $V[\mathbf{r}]$ is a (finite-dimensional) subspace of $V$ such that $(x\otimes t_1^{k_1}\dots t_n^{k_n})V[\mathbf{r}]\subset V[\mathbf{r}+\mathbf{k}]$ for all $x\in\lie g$
and $\mathbf{r},\mathbf{k}\in\mathbb Z_+^n$ where $\mathbf{k}=(k_1,\ldots,k_n)$. 

For $\textbf s\in\mathbb Z_+^n$ and $V\in \mathcal G_\mathbb F$, let $\tau_\mathbf s(V)\in  \text{Ob } \mathcal G_\mathbb F$ be the $U_{\mathbb F}(\lie g[n])$-module such that $$(\tau_\mathbf sV)[\mathbf k]=V[\mathbf k+\mathbf s],$$ 
for all $\mathbf k\in\mathbb Z_+^n$. Further, for each finite dimensional $U_\mathbb F(\lie g[n])$-module, $V$, let $\textbf{ev}_0(V)\in \mathcal G_\mathbb F$ be the $U_\mathbb F(\lie g[n])$-module obtained by trivially extending the action of $U_\mathbb F(\lie g)$ to $U_\mathbb F(\lie g[n])$ (this is possible by a similar argument as in \cite[\S1 and \S3]{JM1}) by setting $U_\mathbb F(\lie g[n]_+)V=0$. Finally, for each $\mathbf r\in\mathbb Z_{+}^n$ and $\lambda\in P^+$, set $$V_\mathbb F(\lambda,\mathbf r):=(\tau_\mathbf r\circ \ev_\mathbf 0)V_\mathbb F(\lambda).$$ 

\

We can now classify the irreducible finite-dimensional $U_\mathbb F(\lie g[n])$-modules.

\begin{thm} If $V\in \mathcal G_\mathbb F$ is finite-dimensional and irreducible, then it is isomorphic to $V_\mathbb F(\lambda,\mathbf r)$ for an unique pair $(\lambda,\mathbf r)\in P^+ \times \mathbb Z_+^n$.
\end{thm}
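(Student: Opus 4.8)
The plan is to show that an irreducible object $V\in\mathcal G_\mathbb F$ is, up to a grade shift, an evaluation module $\mathbf{ev}_0(V_\mathbb F(\lambda))$, and then to read off the pair $(\lambda,\mathbf r)$ from the structure of $V$. First I would observe that, since $V$ is $\mathbb Z_+^n$-graded and finite-dimensional, there is a minimal $\mathbf r\in\mathbb Z_+^n$ (minimal in the obvious coordinatewise sense, using that the support is finite) such that $V[\mathbf r]\ne 0$; the positive-degree part $\lie g[n]_+$ raises the grading, so $\lie g[n]_+\cdot V[\mathbf r]$ lies in strictly higher graded pieces. Replacing $V$ by $\tau_{\mathbf r}(V)$ I may assume $\mathbf r=\mathbf 0$, i.e.\ $V[\mathbf k]=0$ unless $\mathbf k\ge\mathbf 0$ coordinatewise with $V[\mathbf 0]\ne 0$. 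It is convenient to restrict $V$ to the subalgebra $U_\mathbb F(\lie g)=U_\mathbb F(\lie g\otimes 1)$, which preserves each graded piece, so each $V[\mathbf k]$ is a $U_\mathbb F(\lie g)$-submodule. Inside $V[\mathbf 0]$ choose a $U_\mathbb F(\lie g)$-highest-weight vector $v$ of some weight $\lambda\in P^+$ (this exists by Theorem~\ref{t:rh}); I claim $v$ is in fact a highest-weight vector for $U_\mathbb F(\lie g[n])$. Indeed $(x_\alpha^+\otimes b)^{(k)}v$ lies in $V[\mathbf 0+\deg b]$; if $\deg b=\mathbf 0$ this is $0$ because $v$ is $U_\mathbb F(\lie g)$-highest; and if $\deg b>\mathbf 0$ the result has weight $\lambda+k\alpha$ in a graded piece, which forces $0$ by a weight/degree argument, since $\lambda+k\alpha$ is not a weight of the finite-dimensional $U_\mathbb F(\lie g)$-module $V[\deg b]$ once one knows the top weight there is $\le\lambda$ (that bound follows because $V$ is generated by $v$: $V=U_\mathbb F(\lie g[n])v$, and $U_\mathbb F(\lie g[n])=U_\mathbb F(\lie n^-[n])U_\mathbb F(\lie h[n])U_\mathbb F(\lie n^+[n])$, so every weight of $V$ is $\le\lambda$).

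The next step is to show that the submodule $U_\mathbb F(\lie g[n]_+)v$ is proper, hence zero by irreducibility. Since $V=U_\mathbb F(\lie g[n])v$ and $v$ is a highest-weight vector in the sense above, $V$ is a highest-weight $U_\mathbb F(\lie g[n])$-module, so it has a unique maximal proper submodule and a unique irreducible quotient by the standard argument (the analog of Theorem~\ref{t:rh}\eqref{t:rh.weights} in the graded setting, which follows from the triangular decomposition together with the weight-space bound just established: any proper submodule avoids $V_\lambda$, so the sum of proper submodules is proper). Because $V$ itself is irreducible it equals this quotient. Now $U_\mathbb F(\lie g[n]_+)v$ is a submodule not containing $v$: a nonzero element of it has strictly positive degree in at least one coordinate, whereas $v\in V[\mathbf 0]$, and by gradedness the submodule it generates stays in the ideal generated by strictly-positive-degree pieces — more carefully, $U_\mathbb F(\lie g[n])\cdot U_\mathbb F(\lie g[n]_+)v\subseteq \sum_{\mathbf k\ne\mathbf 0}V[\mathbf k]$ does not meet $V[\mathbf 0]=\mathbb F v$ unless it is all of $V$, contradicting maximality unless the submodule is zero. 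Hence $U_\mathbb F(\lie g[n]_+)v=0$, so the $U_\mathbb F(\lie g[n])$-action on $V$ factors through $\mathbf{ev}_0$, i.e.\ $V\cong\mathbf{ev}_0(W)$ for the $U_\mathbb F(\lie g)$-module $W=U_\mathbb F(\lie g)v$. Since $V$ is irreducible, so is $W$, and by Theorem~\ref{t:rh} we get $W\cong V_\mathbb F(\lambda)$ with $\lambda\in P^+$ uniquely determined. Unwinding the grade shift gives $V\cong V_\mathbb F(\lambda,\mathbf r)$.

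For uniqueness of the pair $(\lambda,\mathbf r)$: an isomorphism $V_\mathbb F(\lambda,\mathbf r)\cong V_\mathbb F(\lambda',\mathbf r')$ in $\mathcal G_\mathbb F$ is a graded $U_\mathbb F(\lie g[n])$-map, so it must send the bottom graded piece to the bottom graded piece. By construction $V_\mathbb F(\lambda,\mathbf r)$ is concentrated in a single graded degree $\mathbf r$ (the $\mathbf{ev}_0$ module has $U_\mathbb F(\lie g[n]_+)$ acting as zero, so it lives entirely in degree $\mathbf 0$ before shifting, hence entirely in degree $\mathbf r$ after shifting). Thus $\mathbf r=\mathbf r'$, and then the isomorphism restricts to a $U_\mathbb F(\lie g)$-isomorphism $V_\mathbb F(\lambda)\cong V_\mathbb F(\lambda')$, whence $\lambda=\lambda'$ by Theorem~\ref{t:rh}.

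\textbf{Main obstacle.} I expect the delicate point to be the verification that the chosen $U_\mathbb F(\lie g)$-highest-weight vector $v\in V[\mathbf 0]$ is annihilated by all $(x_\alpha^+\otimes b)^{(k)}$ with $\deg b>\mathbf 0$ — equivalently, that no weight of $V$ exceeds $\lambda$. This requires knowing $V=U_\mathbb F(\lie g[n])v$ before one has identified $v$ as a global highest-weight vector, so the argument has to be bootstrapped: first use irreducibility to get $V=U_\mathbb F(\lie g[n])v$ for \emph{some} nonzero $v$, then use the triangular decomposition $U_\mathbb F(\lie g[n])=U_\mathbb F(\lie n^-[n])U_\mathbb F(\lie h[n])U_\mathbb F(\lie n^+[n])$ together with the fact that $\mathbf 0$ is the minimal degree to push the $U_\mathbb F(\lie n^+[n])$ and positive-degree-$U_\mathbb F(\lie h[n])$ factors through $v$ and land back in $V[\mathbf 0]$, reducing to the $U_\mathbb F(\lie g)$-theory inside the single graded piece $V[\mathbf 0]$. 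Care is also needed with $U_\mathbb F(\lie h[n]_0)$ acting on $v$: a priori it need not act by scalars, but since $V[\mathbf 0]$ is an irreducible $U_\mathbb F(\lie g)$-module (being a graded summand of an irreducible graded module on which only the degree-zero part $U_\mathbb F(\lie g)$ acts nontrivially within that piece) and $U_\mathbb F(\lie h[n]_0)$ raises degree, it must kill $V[\mathbf 0]$ outright; this is really the same computation as $U_\mathbb F(\lie g[n]_+)v=0$ restricted appropriately, and handling it cleanly is where most of the work sits.
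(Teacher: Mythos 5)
Your write-up contains a genuine gap, and it is precisely the step you flag as the ``main obstacle'': the claim in your first paragraph that every weight of $V$ is $\le\lambda$, hence that the chosen $U_\mathbb F(\lie g)$-highest-weight vector $v\in V[\mathbf 0]$ is annihilated by all $(x_\alpha^+\otimes b)^{(k)}$ with $\deg b>\mathbf 0$, is circular. The triangular decomposition bounds the weights of $U_\mathbb F(\lie g[n])v$ only after one knows $U_\mathbb F(\lie n^+[n])^0v=0$, which is exactly what is being proved; and your proposed bootstrap does not repair this, because positive-degree elements of $U_\mathbb F(\lie n^+[n])$ and of $U_\mathbb F(\lie h[n])$ applied to $v$ land in strictly \emph{higher} graded pieces, not ``back in $V[\mathbf 0]$'', and at that stage nothing forces those pieces to vanish. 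Your second paragraph then leans on this unproved highest-weight structure (unique maximal proper submodule, $V[\mathbf 0]=\mathbb F v$, ``contradicting maximality''), so the proof as written does not close. Two further inaccuracies: a coordinatewise-\emph{minimal} element of the finite support need not be a minimum, so after the shift $\tau_{\mathbf r}$ you cannot assert that the support lies in $\mathbb Z_+^n$ with nonzero bottom piece $V[\mathbf 0]$ (this is only true a posteriori); and $U_\mathbb F(\lie g[n]_+)v=0$ alone does not yet show that the action on all of $V$ factors through $\ev_{\mathbf 0}$ --- for that you need the positive-degree part to annihilate all of $V$, not just the generator $v$.

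The good news is that the sound kernel of your argument --- the graded-submodule observation in your second paragraph --- already proves the theorem with no highest-weight scaffolding at all, and this is exactly the paper's proof. If $V[\mathbf r]\ne 0$ and $V[\mathbf s]\ne 0$ with $\mathbf r\ne\mathbf s$, take $\mathbf r>\mathbf s$ in the lexicographic order; since the action of a homogeneous element of degree $\mathbf k\in\mathbb Z_+^n$ sends $V[\mathbf m]$ to $V[\mathbf m+\mathbf k]$ and $\mathbf m+\mathbf k\ge\mathbf m$ lexicographically, $\bigoplus_{\mathbf k\ge\mathbf r}V[\mathbf k]$ is a nonzero submodule missing $V[\mathbf s]$, contradicting irreducibility. (Equivalently, with $\mathbf r$ minimal in the support for the coordinatewise order, $\bigoplus_{\mathbf k\ne\mathbf r}V[\mathbf k]$ is a proper submodule, hence zero.) Thus the support is a single degree $\mathbf r$; every homogeneous element of positive degree then acts as zero, so $V=V[\mathbf r]$ is a simple $U_\mathbb F(\lie g)$-module, i.e.\ $V\cong V_\mathbb F(\lambda)$ as a $U_\mathbb F(\lie g)$-module and $V\cong V_\mathbb F(\lambda,\mathbf r)$ in $\mathcal G_\mathbb F$. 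Your uniqueness paragraph is fine and can be kept as is. I recommend deleting the highest-weight detour of your first paragraph entirely and running the submodule argument directly.
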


\proof Suppose $V[\mathbf r]$ and $V[\mathbf s]$ are non-zero for some distinct $\mathbf r,\mathbf s\in \mathbb Z_+^n$. Without loss of generality, suppose that $\mathbf r>\mathbf s$ where $>$ is the natural lexicographic order in $\mathbb Z_+^n$. Then $\bigoplus_{\mathbf k \ge \mathbf r} V[\mathbf k]$ would be a proper submodule of $V$, contradicting the fact that it is irreducible. Thus there must exist a unique $\mathbf r\in \mathbb Z_+^n$ such that $V[\mathbf r]\ne 0$. Since $U_\mathbb F(\lie g[n]_+)$ changes degrees, $V = V[\mathbf r]$ must be a simple $U_\mathbb F(\lie g)$-module. From this we conclude that $V\cong  V_\mathbb F(\lambda ,\mathbf r)$ for some $\lambda \in P^+$.
\endproof

The next theorem records the basic properties of the graded analogues of local Weyl modules.

\begin{thm}\label{mainthm}
Let $\lambda \in P^+$ and $n\in\mathbb N$. Suppose characteristic of $\mathbb F$ is not $2$.
	\begin{enumerate}
     
     \item The $U_\mathbb F(\lie g[n])$-module $W_\mathbb F^n(\lambda)$ generated by the element $v_\lambda$ with defining relations
\begin{gather*}
	U_\mathbb F(\lie n^+[n])^0v_\lambda= U_\mathbb F(\lie h[n]_0)^0v_\lambda=
	(h- \lambda(h))v_\lambda=(x_\alpha^-)^{(k)}v_\lambda=0,
	\end{gather*}
for all $h\in U(\lie h),\ \alpha\in R^+,$ and 
$k\in\mathbb N$ with $k>\lambda(h_\alpha)$, is graded and finite--dimensional.

     \item If $V$ is a graded finite--dimensional $U_\mathbb F(\lie g[n])$-module generated by a weight vector $v$, of weight $\lambda$, satisfying the relations $U_\mathbb F(\lie n^+[n])^0v= U_\mathbb F(\lie h[n]_0)^0v = 0,$ then $V$ is a quotient of $W_\mathbb F^n(\lambda)$.
	\end{enumerate}
\end{thm}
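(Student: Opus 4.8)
\textbf{Proof plan for Theorem \ref{mainthm}.}

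The plan is to handle the two parts in sequence, following the familiar strategy used for graded local Weyl modules over current algebras (as in \cite{CG,bmm}), adapted to the hyperalgebra setting. For part (2), I would first observe that the relations $U_\mathbb F(\lie n^+[n])^0v = U_\mathbb F(\lie h[n]_0)^0v = 0$ together with the fact that $v$ has weight $\lambda$ (so $(h-\lambda(h))v=0$ for $h\in U_\mathbb F(\lie h)$) are precisely the defining relations of $W_\mathbb F^n(\lambda)$ \emph{except} for the top relations $(x_\alpha^-)^{(k)}v=0$ for $k>\lambda(h_\alpha)$. So the only thing to check is that these extra relations are automatically satisfied in any graded finite-dimensional $V$ generated by such a $v$. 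This follows exactly as in Proposition \ref{fdprop}(i): for each $\alpha\in R^+$ the element $v$ generates, over the $\lie{sl}_2$-copy $U_\mathbb F(\lie g[n]_{\alpha,\mathbf 0})\cong U_\mathbb F(\lie{sl}_2)$ spanned by the degree-zero root vectors, a finite-dimensional highest-weight module, hence a quotient of the classical Weyl module $W_\mathbb F(\lambda(h_\alpha))$, which forces $(x_\alpha^-)^{(k)}v=0$ for $k>\lambda(h_\alpha)$. Then the universal property of $W_\mathbb F^n(\lambda)$ (as a module defined by generators and relations) gives the desired surjection $W_\mathbb F^n(\lambda)\twoheadrightarrow V$, and it is a graded map because $v_\lambda$ and $v$ both sit in degree $\mathbf 0$.

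The substance of the theorem is part (1): that $W_\mathbb F^n(\lambda)$ is graded and, above all, \emph{finite-dimensional}. Gradedness is straightforward: the defining ideal is generated by $\mathbb Z_+^n$-homogeneous elements (the elements $h-\lambda(h)$ and $U_\mathbb F(\lie n^+[n])^0$, $U_\mathbb F(\lie h[n]_0)^0$ lie in a sum of graded pieces, and $(x_\alpha^-)^{(k)}$ is in degree $\mathbf 0$), so $W_\mathbb F^n(\lambda)$ inherits a $\mathbb Z_+^n$-grading with $v_\lambda$ in degree $\mathbf 0$. For finite-dimensionality, the PBW-type decomposition $U_\mathbb F(\lie g[n]) = U_\mathbb F(\lie n^-[n])\,U_\mathbb F(\lie h[n])\,U_\mathbb F(\lie n^+[n])$ together with the relations shows that $W_\mathbb F^n(\lambda) = U_\mathbb F(\lie n^-[n])v_\lambda$, so it suffices to bound the span. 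The key tool is the straightening identity of Lemma \ref{basicrel}, second displayed line: since any product $ab$ of basis monomials with $ab\neq 1$ lies in $\mathbb C[n]_+$, acting by $(x_\alpha^+\otimes a)^{(r)}$ on $(x_\alpha^-\otimes b)^{(s)}v_\lambda$ produces, modulo $U_\mathbb F(\lie n^+[n])^0 + U_\mathbb F(\lie h[n]_0)^0$ (which kill $v_\lambda$), an explicit lower-degree expression in $x_\alpha^-$'s. The plan is to iterate this to show that the action of any $(x_\alpha^-\otimes t^{\mathbf m})^{(k)}$ on $v_\lambda$, for $\mathbf m$ large, reduces to a combination of monomials of bounded total degree and bounded polynomial degree — precisely the hyperalgebra analogue of the classical argument that $W^n_\mathbb F(\lambda)$ is spanned by monomials in a finite set of elements $x_{\alpha,\mathbf s}^-$ with $\mathbf s$ in a bounded box. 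Running this argument for each simple root $\alpha_i$ first, then for all positive roots using Lemma \ref{commutrels}(1) to reorder, one obtains a finite spanning set.

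The main obstacle I expect is making the reduction in part (1) precise enough to get an \emph{effective} finite bound, i.e. showing that the relevant monomials $(x_{\alpha,\mathbf s}^-)^{(k)}$ can be taken with each coordinate of $\mathbf s$ bounded in terms of $\lambda(h_\alpha)$ (or $\lambda(h_\theta)$) and with the divided-power exponents controlled — the subtlety being that in positive characteristic one must track divided powers carefully and the hypothesis $\mathrm{char}\,\mathbb F\neq 2$ presumably enters exactly here (it is needed, for instance, when manipulating $\lie{sl}_2$-type identities or $\Lambda$-polynomial relations so that certain binomial coefficients are invertible). I would structure this as a separate lemma, probably the content deferred to Section \ref{mainproofs}: a ``box'' statement asserting that $(x_\alpha^-\otimes t^{\mathbf m})^{(k)}v_\lambda$ lies in the span of monomials of lower degree whenever some coordinate of $\mathbf m$ exceeds $\lambda(h_\alpha)$, proved by a double induction on total degree and on $|\mathbf m|$ using Lemma \ref{basicrel} to trade a high power of $t_j$ for lower ones while decreasing the inductive parameter. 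Once this box lemma is in hand, finite-dimensionality of $W_\mathbb F^n(\lambda)$ follows formally, and the proof of part (1) is complete.
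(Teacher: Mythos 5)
Your part (2) and the overall architecture of your part (1) --- gradedness, the reduction $W^n_\mathbb F(\lambda)=U_\mathbb F(\lie n^-[n])v_\lambda$, a ``box'' lemma bounding the polynomial degrees, then a spanning argument with Lemma \ref{commutrels}(1) used to reorder --- all match the paper. The gap is in the concrete mechanism you propose for the box lemma, and it is exactly where the real content (and the hypothesis $\mathrm{char}\,\mathbb F\neq 2$) lives. Iterating Lemma \ref{basicrel} does work when the monomial is a power of a single variable: taking $a=t_j$, $b=1$, the series $X_{\alpha,a,b}(u)$ involves only the elements $x^-_\alpha\otimes t_j^r$, and the defining relation $(x^-_\alpha)^{(s)}v_\lambda=0$ for $s>\lambda(h_\alpha)$ yields $(x_\alpha^-\otimes t_j^{a_j})^{(k)}v_\lambda=0$ for $a_j\geq\lambda(h_\alpha)$ by induction on $k$; this is precisely the first case of Lemma \ref{auxlem}(a). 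But for a genuinely mixed monomial $t_1^{a_1}\cdots t_n^{a_n}$ with, say, $a_1\geq\lambda(h_\alpha)$ and some other $a_j\neq 0$, Lemma \ref{basicrel} alone does not reach it: the monomials occurring in $X_{\alpha,a,b}(u)$ are all of the form $a^mb^{m+1}$, so to extract a usable relation you must already control $(x^-_\alpha\otimes b)^{(s)}v_\lambda$, and bootstrapping from $b=1$ (or from the pure powers just obtained) only produces relations among monomials lying on such ``lines''; you never isolate $t_1^{a_1}t_2^{a_2}$ with $a_2$ small. ``Trading a high power of $t_j$ for lower ones'' via Lemma \ref{basicrel} is how the loop-case proof of Theorem \ref{mainthm2}(1) goes (where the $\Lambda$'s and negative exponents are available), but it does not close the argument in the graded multicurrent case.

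The missing idea is to use the relation $U_\mathbb F(\lie h[n]_0)^0v_\lambda=0$ actively, not merely as the error term in Lemma \ref{basicrel}: the paper writes, inside the relevant $\lie{sl}_2[n]$-copy, $x^-\otimes t_1^{a_1}t_2^{a_2}\cdots t_n^{a_n}=-\tfrac12\left[h\otimes t_2^{a_2}\cdots t_n^{a_n},\,x^-\otimes t_1^{a_1}\right]$, and then kills $(x^-\otimes t_1^{a_1}\cdots t_n^{a_n})^{(k)}v_\lambda$ using both $(x^-\otimes t_1^{a_1})v_\lambda=0$ (the pure-power case) and $(h\otimes t_2^{a_2}\cdots t_n^{a_n})v_\lambda=0$. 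This commutator trick, with the factor $-\tfrac12$ coming from $[h,x^-]=-2x^-$, is exactly where $\mathrm{char}\,\mathbb F\neq 2$ enters --- not the invertibility of binomial coefficients, as you guessed. Note also that the paper's box statement is a genuine vanishing, $(x_\alpha^-\otimes t^{\mathbf b})^{(k)}v=0$ whenever $\max\mathbf b\geq\lambda(h_\alpha)$ (Lemma \ref{auxlem}), rather than a reduction to lower-degree terms; once that vanishing is in hand, the spanning induction you sketch does go through essentially as in the paper.
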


We call the $U_\mathbb F(\lie g[n])$-modules, $W_\mathbb F^n(\lambda)$, introduced in item (1) \textit{graded local Weyl modules}. 

\begin{proof}

Part (1) will be proven in Section \ref{main}. To prove (2), observe that the $U_\mathbb F(\lie g)$-submodule $V'=U_\mathbb F(\lie g)v\subseteq V$ is a finite-dimensional highest-weight $U_\mathbb F(\lie g)$-module of highest weight $\lambda$. Thus, $V'$ is a quotient of $W_\mathbb F(\lambda)$ (cf. Section \ref{sec:rev.g}). The statement
follows by comparing the defining relations of $V$ and $W_\mathbb F^n(\lambda)$.
\end{proof}

\subsection{A relationship between local Weyl modules and graded local Weyl modules}\label{iso}

The result which we aim to consider here is a generalization of \cite[Theorem 1.5.2(c)]{bmm} which establishes a relationship, specifically an isomorphism, between the local Weyl modules and the graded local Weyl modules for the case of the hyper current and hyper loop algebras (that is $n=1$ in the present notation). This isomorphism can be naturally extended to the case $n>1$ as stated in the conjecture below. The proof presented in \cite[Section 5.4]{bmm} cannot be reproduced here since, for $n>1$, we do not have Demazure modules, lattice constructions, and the independence of the dimension of the local Weyl module from the base field $\mathbb F$.

It is worth to mention that this sort of relationship was first considered in the non-hyper case (i.e., characteristic zero) in \cite{fl07}.

\

Let $\lie a, \lie b$ be such that $U_\mathbb Z(\lie a)$ and $U_\mathbb Z(\lie b)$ have been defined.  Let $\mathbf a=(a_1,\dots, a_n)\in\mathbb F^n$, and $\varphi_{\mathbf a}$ the Lie algebra automorphism of $\lie g[n]_\mathbb F$ given by 
$$x\otimes f(t_1,\dots,t_n)\mapsto x\otimes f(t_1-  a_1,\dots,t_n-  a_n).$$ 
Observe that $\varphi_{\mathbf a}$ acts as the identity on $U_\mathbb F(\lie g)$ and $\varphi_{\mathbf a}$ induces an automorphism of $U_\mathbb F(\lie g[n])$, since it is just a change of variables. One easily checks that, $\varphi_{\mathbf a}(U_\mathbb F(\lie n^\pm[n]))=U_\mathbb F(\lie n^\pm[n])$ and $\varphi_{\mathbf a}(U_\mathbb F(\lie h[n]))=U_\mathbb F(\lie h[n])$. 

Denote by $res(W_\mathbb F(\pmb\omega^{\lambda,\mathbf a}))$ the module obtained by regarding $W_\mathbb F(\pmb\omega^{\lambda,\mathbf a})$ as a $U_\mathbb F(\lie g[n])$-module via restriction of the action of $U_\mathbb F(\lie g\langle n\rangle)$ to $U_\mathbb F(\lie g[n])$. Moreover, let $\varphi_{\mathbf a}^*(W_\mathbb F(\pmb\omega^{\lambda,\mathbf a}))$ be the pull-back of $res(W_\mathbb F(\pmb\omega^{\lambda,\mathbf a}))$ by $\varphi_{\mathbf a}$.

\

\begin{conj} \label{conjec}
For any $\mathbf{a} \in \mathbb F^n\setminus \{\mathbf 0\}$, $\varphi_{\mathbf a}^*(W_\mathbb F(\pmb\omega^{\lambda,\mathbf a}))$  is isomorphic to $W_\mathbb F^n(\lambda)$.
\end{conj}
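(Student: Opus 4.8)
\textbf{Proof proposal for Conjecture \ref{conjec}.}

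The plan is to exhibit explicit surjections in both directions between $\varphi_{\mathbf a}^*(W_\mathbb F(\pmb\omega^{\lambda,\mathbf a}))$ and $W_\mathbb F^n(\lambda)$ and conclude they are isomorphic by a dimension (or, better, a direct-generator) argument. First I would fix a highest-$\ell$-weight generator $w$ of $W_\mathbb F(\pmb\omega^{\lambda,\mathbf a})$ and transport it through $\varphi_{\mathbf a}$; write $v=\varphi_{\mathbf a}^{-1}(w)$ for the corresponding cyclic generator of the pull-back module. The key preliminary computation is to show that $v$ satisfies the defining relations of $W_\mathbb F^n(\lambda)$ from Theorem \ref{mainthm}(1). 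The relation $(h-\lambda(h))v=0$ for $h\in U_\mathbb F(\lie h)$ is clear, since $\varphi_{\mathbf a}$ is the identity on $U_\mathbb F(\lie g)$ and $\pmb\omega^{\lambda,\mathbf a}$ has weight $\lambda$ by \eqref{drinfeva}; similarly $(x_\alpha^-)^{(k)}v=0$ for $k>\lambda(h_\alpha)$. The relation $U_\mathbb F(\lie n^+[n])^0v=0$ follows because $\varphi_{\mathbf a}$ preserves $U_\mathbb F(\lie n^+[n])$ and $w$ is annihilated by $U_\mathbb F(\lie n^+\langle n\rangle)^0\supseteq U_\mathbb F(\lie n^+[n])^0$. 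The one relation requiring real work is $U_\mathbb F(\lie h[n]_0)^0 v = 0$: here I would use that, after the shift $\varphi_{\mathbf a}$, the evaluation parameter becomes $\mathbf 0$, so for the generators $h_i\otimes t_j^s$ one has, via the exponential series defining $\Lambda_{i,t_j}(u)$ and the logarithm identity used in the proof of Proposition \ref{p:evmod}, that the series $\Lambda_{i,t_j}(u)$ acting on $w$ yields $(1-a_ju)^{\lambda(h_i)}$, and pulling back by $\varphi_{\mathbf a}$ (which sends $t_j$ to $t_j+a_j$, i.e.\ undoes the evaluation shift) turns this into the constant polynomial $1$; concretely this forces $(h_i\otimes t_j^s)\,v=0$ for all $s\ge 1$, and then Lemma \ref{f^kintermsoff} together with the PBW basis of $U_\mathbb F(\lie h[n]_0)$ propagates this to all of $U_\mathbb F(\lie h[n]_0)^0$. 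This gives a surjective $U_\mathbb F(\lie g[n])$-morphism $W_\mathbb F^n(\lambda)\twoheadrightarrow \varphi_{\mathbf a}^*(W_\mathbb F(\pmb\omega^{\lambda,\mathbf a}))$ by the universal property in Theorem \ref{mainthm}(2).

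For the reverse direction I would produce a surjection $\varphi_{\mathbf a}^*(W_\mathbb F(\pmb\omega^{\lambda,\mathbf a}))\twoheadrightarrow W_\mathbb F^n(\lambda)$. Equivalently, pulling everything back by $\varphi_{\mathbf a}^{-1}$, it suffices to show that $\varphi_{\mathbf a}^{-1}$ makes the generator $v_\lambda$ of $W_\mathbb F^n(\lambda)$ into a highest-$\ell$-weight vector for the $U_\mathbb F(\lie g\langle n\rangle)$-action whose $n$-Drinfeld polynomial is $\pmb\omega^{\lambda,\mathbf a}$ — but $W_\mathbb F^n(\lambda)$ is only a $U_\mathbb F(\lie g[n])$-module, so instead I would argue as follows. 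By Proposition \ref{p:evmod} applied to $V_\mathbb F(\lambda)$, the evaluation module $V_\mathbb F(\lambda)(\mathbf a)$ has Drinfeld polynomial $\pmb\omega^{\lambda,\mathbf a}$, hence by Theorem \ref{mainthm2}(2) it is a quotient of $W_\mathbb F(\pmb\omega^{\lambda,\mathbf a})$; applying $\varphi_{\mathbf a}^*$ and noting $\varphi_{\mathbf a}^*(\mathrm{res}\,(V_\mathbb F(\lambda)(\mathbf a)))\cong \mathbf{ev}_0(V_\mathbb F(\lambda))=V_\mathbb F(\lambda,\mathbf 0)$, we see $\varphi_{\mathbf a}^*(W_\mathbb F(\pmb\omega^{\lambda,\mathbf a}))$ surjects onto $V_\mathbb F(\lambda,\mathbf 0)$. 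That alone is not enough; the genuinely new input needed is that $\varphi_{\mathbf a}^*(W_\mathbb F(\pmb\omega^{\lambda,\mathbf a}))$ is \emph{generated} by a vector $v$ of weight $\lambda$ killed by $U_\mathbb F(\lie n^+[n])^0$ and by $U_\mathbb F(\lie h[n]_0)^0$ — which is exactly the relation set verified in the previous paragraph — so Theorem \ref{mainthm}(2) directly gives the desired surjection $W_\mathbb F^n(\lambda)\twoheadrightarrow \varphi_{\mathbf a}^*(W_\mathbb F(\pmb\omega^{\lambda,\mathbf a}))$ and, by the same token, the composite is the identity on the cyclic generators. To get an honest isomorphism I would then show the reverse surjection: here the obstacle flagged in the text is that we lack a base-field–independent dimension formula, so instead I would use that $W_\mathbb F(\pmb\omega^{\lambda,\mathbf a})$ is the \emph{universal} finite-dimensional highest-$\ell$-weight module of its $\ell$-weight, and show $W_\mathbb F^n(\lambda)$ carries a $U_\mathbb F(\lie g\langle n\rangle)$-module structure — via the algebra map obtained by pulling back $\mathrm{ev}_{\mathbf a}$ composed with $\varphi_{\mathbf a}^{-1}$, or rather, by checking directly that on $W_\mathbb F^n(\lambda)$ the operators $(x_\alpha^\pm\otimes t^{\mathbf s})^{(k)}$ for $\mathbf s\in\mathbb Z^n$ (including negative exponents) are well-defined — in which $v_\lambda$ becomes highest-$\ell$-weight with Drinfeld polynomial $\pmb\omega^{\lambda,\mathbf a}$, forcing the reverse surjection by Theorem \ref{mainthm2}(2).

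The main obstacle is precisely this last point: extending the $U_\mathbb F(\lie g[n])$-action on $W_\mathbb F^n(\lambda)$ to a $U_\mathbb F(\lie g\langle n\rangle)$-action (equivalently, making sense of the negative Laurent modes and the elements $\Lambda_{i,b^{-1},r}$), since $\mathbb C[n]$ is not localized inside $\mathbb C\langle n\rangle$ in a way that respects the integral forms uniformly. The way around it is to transfer the problem to $V_\mathbb F(\lambda,\mathbf a):=\mathrm{res}\,(V_\mathbb F(\lambda)(\mathbf a))$: on an evaluation module the negative modes act by nonzero scalars times the $\lie g$-action, so they are automatically defined, and the relations $U_\mathbb F(\lie h[n]_0)^0$-annihilation translate, after the $\varphi_{\mathbf a}$-shift, exactly into the statement that $\Lambda_{i,t_j}(u)$ acts by $(1-a_ju)^{\lambda(h_i)}$ on the would-be highest-$\ell$-weight vector — and Part iv) and Part v) of Proposition \ref{fdprop} guarantee that specifying these finitely many scalars $\omega_{i,t_j,r}$ (for $1\le r\le\lambda(h_i)$) pins down the action of \emph{all} of $U_\mathbb F(\lie h\langle n\rangle)$ on the cyclic module, and in particular forces finite-dimensionality and the quotient relationship with $W_\mathbb F(\pmb\omega^{\lambda,\mathbf a})$. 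Combining the two surjections (each being the identity on generators) yields the claimed isomorphism $\varphi_{\mathbf a}^*(W_\mathbb F(\pmb\omega^{\lambda,\mathbf a}))\cong W_\mathbb F^n(\lambda)$. I expect that, as in \cite{bmm}, a clean writeup replaces the dimension count entirely by the observation that both modules are characterized by the identical generators-and-relations presentation once the ambient algebra is correctly identified, so the surjection $W_\mathbb F^n(\lambda)\twoheadrightarrow\varphi_{\mathbf a}^*(W_\mathbb F(\pmb\omega^{\lambda,\mathbf a}))$ together with the fact that the target satisfies the defining relations of the source with the generator mapping to the generator is already an isomorphism by the universal property.
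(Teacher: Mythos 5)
You have not proved the statement; it is stated in the paper as a conjecture precisely because both of the ingredients your plan needs are unavailable, and your proposal does not supply them. Your first direction (a surjection $W_\mathbb F^n(\lambda)\twoheadrightarrow\varphi_{\mathbf a}^*(W_\mathbb F(\pmb\omega^{\lambda,\mathbf a}))$) is, in characteristic zero, exactly Proposition \ref{quotient} of the paper. In positive characteristic your argument for the key relation $U_\mathbb F(\lie h[n]_0)^0 v=0$ breaks down at the "propagation" step: knowing $(h_i\otimes t_j^s)v=0$ for all $s\ge 1$ does not control $U_\mathbb F(\lie h[n]_0)^0$, because that hyperalgebra is \emph{not} generated by the elements $h_i\otimes c$; by Theorem \ref{gnforms} it is spanned by monomials in the divided-power-type elements $\Lambda_{i,c,r}$, which in characteristic $p$ are not polynomials in the $h_i\otimes c$ over $\mathbb F$. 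Lemma \ref{f^kintermsoff} only rewrites $\Lambda_{\alpha,f^k,r}$ in terms of $\Lambda_{\alpha,f,s}$ for the same $f\in\mathbb B^\bullet$; it does not express $\Lambda_{i,c,r}$ in terms of the $h_i\otimes t_j^s$, nor does it show that $\varphi_{\mathbf a}(\Lambda_{i,c,r})=\Lambda_{i,\varphi_{\mathbf a}(c),r}$ (a $\Lambda$ attached to the non-monomial element $\varphi_{\mathbf a}(c)$) annihilates the highest-$\ell$-weight vector. That vanishing is a genuine integral/characteristic-$p$ statement that your proposal asserts but does not prove.

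The second direction is the real content of the conjecture, and your workaround does not produce it. To get $\varphi_{\mathbf a}^*(W_\mathbb F(\pmb\omega^{\lambda,\mathbf a}))\twoheadrightarrow W_\mathbb F^n(\lambda)$ via Theorem \ref{mainthm2}(2) you would need a $U_\mathbb F(\lie g\langle n\rangle)$-module structure on $W_\mathbb F^n(\lambda)$ (or some other lower bound on $\dim\varphi_{\mathbf a}^*(W_\mathbb F(\pmb\omega^{\lambda,\mathbf a}))$), but your "transfer to $V_\mathbb F(\lambda)(\mathbf a)$" replaces the graded Weyl module by its irreducible quotient: on the evaluation module the negative Laurent modes indeed act by scalars times the $\lie g$-action, but this says nothing about $W_\mathbb F^n(\lambda)$, which is in general strictly larger than $V_\mathbb F(\lambda)$; what you obtain is only the surjection onto $V_\mathbb F(\lambda,\mathbf 0)$, which you yourself note is not enough. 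Likewise, Proposition \ref{fdprop}(iv)--(v) constrains the action of $U_\mathbb F(\lie h\langle n\rangle)$ on a vector \emph{inside a finite-dimensional $U_\mathbb F(\lie g\langle n\rangle)$-module}; it cannot be used to build such a module structure on $W_\mathbb F^n(\lambda)$. Your closing "identical generators-and-relations" remark also fails: the two modules are presented over different algebras ($U_\mathbb F(\lie g[n])$ versus $U_\mathbb F(\lie g\langle n\rangle)$), so a single surjection carrying generator to generator gives no injectivity without a dimension comparison --- and the absence of Demazure/lattice techniques and of field-independence of $\dim W_\mathbb F(\pmb\omega)$ for $n>1$ is exactly the obstruction the authors point out. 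If you could genuinely construct both surjections, finite-dimensionality would indeed force an isomorphism, but as written the first surjection is only justified in characteristic zero and the second is not constructed at all; the proposal therefore recovers Proposition \ref{quotient} but leaves Conjecture \ref{conjec} open.
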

\

Under the assumption that the characteristic of $\mathbb F$ is zero we present a weaker relationship in the direction of this Conjecture:

\begin{prop} \label{quotient} Suppose the characteristic of $\mathbb F$ is zero. Then, for any $\mathbf{a} \in \mathbb F^n\setminus \{\mathbf 0\}$, $\varphi_{\mathbf a}^*(W_\mathbb F(\pmb\omega^{\lambda,\mathbf a}))$  is  a quotient of $W_\mathbb F^n(\lambda)$.
 
\end{prop}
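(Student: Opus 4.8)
The plan is to produce a surjective homomorphism of $U_\mathbb F(\lie g[n])$-modules $W_\mathbb F^n(\lambda)\twoheadrightarrow\varphi_{\mathbf a}^*(W_\mathbb F(\pmb\omega^{\lambda,\mathbf a}))$ by transporting the defining relations of the generator of $W_\mathbb F^n(\lambda)$ along $\varphi_{\mathbf a}$ and then checking cyclicity. Since $\mathrm{char}\,\mathbb F=0$, Remark \ref{rem1}(1) lets me identify $U_\mathbb F(\lie g\langle n\rangle)$, $U_\mathbb F(\lie g[n])$ and $U_\mathbb F(\lie g)$ with the enveloping algebras of $\lie g_\mathbb F\otimes\mathbb F\langle n\rangle$, $\lie g_\mathbb F\otimes\mathbb F[n]$ and $\lie g_\mathbb F$ respectively, where $\lie g_\mathbb F$ is a finite-dimensional simple Lie algebra over $\mathbb F$; in particular divided powers are ordinary powers over factorials and each augmentation ideal $U_\mathbb F(\lie a)^0$ equals the left ideal $U_\mathbb F(\lie a)\,\lie a$. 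Write $W:=W_\mathbb F(\pmb\omega^{\lambda,\mathbf a})$, let $v$ be its canonical highest-$\ell$-weight generator, and regard $v$ also as an element of $M:=\varphi_{\mathbf a}^*(W)$.

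The first task is to verify that, in $M$, the vector $v$ satisfies the defining relations of the generator $v_\lambda$ of $W_\mathbb F^n(\lambda)$ from Theorem \ref{mainthm}(1). Because $\varphi_{\mathbf a}$ is the identity on $U_\mathbb F(\lie g)$ and $\wt(\pmb\omega^{\lambda,\mathbf a})=\lambda$, the relations $(h-\lambda(h))v=0$ for $h\in U_\mathbb F(\lie h)$ and $(x_\alpha^-)^{(k)}v=0$ for $k>\lambda(h_\alpha)$ carry over verbatim from the defining relations of $W$. For the nilpotent part, $\varphi_{\mathbf a}(x_\alpha^+\otimes f)=x_\alpha^+\otimes f(t_1-a_1,\dots,t_n-a_n)$ is, for any $f\in\mathbb F[n]$, an $\mathbb F$-linear combination of elements $x_\alpha^+\otimes\mathbf t^{\mathbf r}$ with $\mathbf r\in\mathbb Z_+^n$, each of which annihilates $v$; hence $\lie n^+[n]$ kills $v$ in $M$, and therefore so does $U_\mathbb F(\lie n^+[n])^0$. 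For the abelian part, I would first record that $(h_i\otimes\mathbf t^{\mathbf r})v=\lambda(h_i)\,\mathbf a^{\mathbf r}\,v$ for all $i\in I$ and $\mathbf r\in\mathbb Z_+^n$, where $\mathbf a^{\mathbf r}:=a_1^{r_1}\cdots a_n^{r_n}$: by Proposition \ref{p:evmod} and Theorem \ref{mainthm2}(2) the evaluation module $V_\mathbb F(\lambda)(\mathbf a)$ is a finite-dimensional highest-$\ell$-weight quotient of $W$, on whose generator $h_i\otimes\mathbf t^{\mathbf r}$ acts by $\mathbf a^{\mathbf r}\lambda(h_i)$ through $\ev_{\mathbf a}$, while on $v$ it acts by a scalar (since $U_\mathbb F(\lie h\langle n\rangle)v=\mathbb Fv$); pushing this relation forward along the quotient map identifies the two scalars. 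Then, for $f\in\mathbb F[n]$ with $f(\mathbf 0)=0$, one gets $\varphi_{\mathbf a}(h_i\otimes f)v=(h_i\otimes f(t-\mathbf a))v=\lambda(h_i)\,f(\mathbf a-\mathbf a)\,v=\lambda(h_i)f(\mathbf 0)v=0$, so $\lie h[n]_0$ kills $v$ in $M$ and hence so does $U_\mathbb F(\lie h[n]_0)^0$.

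Granting these relations, the presentation of $W_\mathbb F^n(\lambda)$ (valid since $\mathrm{char}\,\mathbb F=0\neq2$) produces a $U_\mathbb F(\lie g[n])$-module map $\phi\colon W_\mathbb F^n(\lambda)\to M$ with $\phi(v_\lambda)=v$, whose image is the $U_\mathbb F(\lie g[n])$-submodule of $W$ generated by $v$ (using $\varphi_{\mathbf a}(U_\mathbb F(\lie g[n]))=U_\mathbb F(\lie g[n])$). The heart of the matter is then to prove $U_\mathbb F(\lie g[n])\,v=W$, i.e.\ that $v$ already generates $W$ over the hyper multicurrent algebra. I would argue as follows. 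Since $W$ is finite-dimensional, the kernel of $\rho\colon\lie g_\mathbb F\otimes\mathbb F\langle n\rangle\to\mathfrak{gl}(W)$ is an ideal of $\lie g_\mathbb F\otimes\mathbb F\langle n\rangle$, hence — by simplicity of $\lie g_\mathbb F$ — of the form $\lie g_\mathbb F\otimes\mathfrak a$ for a unique ideal $\mathfrak a\subseteq\mathbb F\langle n\rangle$, and finite-dimensionality of $W$ forces $S:=\mathbb F\langle n\rangle/\mathfrak a$ to be finite-dimensional. In the finite-dimensional commutative $\mathbb F$-algebra $S$ each image $\bar t_j$ is a unit (the $t_j$ are units in $\mathbb F\langle n\rangle$), so $\bar t_j^{-1}\in\mathbb F[\bar t_j]$; therefore the composite $\mathbb F[n]\hookrightarrow\mathbb F\langle n\rangle\twoheadrightarrow S$ is surjective, whence $\lie g_\mathbb F\otimes\mathbb F[n]\twoheadrightarrow(\lie g_\mathbb F\otimes\mathbb F\langle n\rangle)/\ker\rho=\lie g_\mathbb F\otimes S$. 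Consequently $U_\mathbb F(\lie g[n])$ and $U_\mathbb F(\lie g\langle n\rangle)$ have the same image in $\mathrm{End}(W)$, so $U_\mathbb F(\lie g[n])\,v=U_\mathbb F(\lie g\langle n\rangle)\,v=W$; thus $\phi$ is surjective and $\varphi_{\mathbf a}^*(W_\mathbb F(\pmb\omega^{\lambda,\mathbf a}))$ is a quotient of $W_\mathbb F^n(\lambda)$. I expect this cyclicity step to be the only genuine obstacle: it is precisely where the hypothesis that $\mathbf a$ has all coordinates nonzero (so that each $t_j$ is invertible) enters, and also where $\mathrm{char}\,\mathbb F=0$ is used essentially, through the structure of finite-dimensional modules over $\lie g_\mathbb F\otimes\mathbb F\langle n\rangle$.
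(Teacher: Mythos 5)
Your argument is correct, and the relation-checking half is essentially the paper's: you fix the highest-$\ell$-weight vector $v$, use that $\varphi_{\mathbf a}$ fixes $U_\mathbb F(\lie g)$ and preserves $U_\mathbb F(\lie n^+[n])$, pin down the scalar action of $h_i\otimes\mathbf t^{\mathbf r}$ on $v$ through the evaluation-module quotient $V_\mathbb F(\lambda)(\mathbf a)$, and then kill $\lie h[n]_0$ by evaluating the shifted polynomial at $\mathbf a$ — this is exactly the paper's binomial computation, just phrased as $f(\mathbf a-\mathbf a)=f(\mathbf 0)=0$, and the reduction of $U_\mathbb F(\lie h[n]_0)^0$ to $\lie h[n]_0$ uses characteristic zero in the same way. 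Where you genuinely diverge is the cyclicity step $W_\mathbb F(\pmb\omega^{\lambda,\mathbf a})=U_\mathbb F(\lie g[n])v$: the paper gets this for free from the spanning argument inside its proof of Theorem \ref{mainthm2}(1), where the local Weyl module is shown to be spanned by monomials $(x^-_{\beta,\mathbf s})^{(k)}v$ with $\mathbf s\in\mathbb Z_+^n$ — an argument valid over any field — whereas you prove it by structure theory in characteristic zero: the kernel of $\lie g_\mathbb F\otimes\mathbb F\langle n\rangle\to\mathfrak{gl}(W)$ is $\lie g_\mathbb F\otimes\mathfrak a$ with $S=\mathbb F\langle n\rangle/\mathfrak a$ finite-dimensional, and since the images $\bar t_j$ are units of a finite-dimensional commutative algebra their inverses lie in $\mathbb F[\bar t_j]$, so $\mathbb F[n]$ already surjects onto $S$ and the two hyperalgebras have the same image in $\operatorname{End}(W)$. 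Your route is self-contained and conceptually transparent, but it is tied to characteristic zero (Schur-type ideal classification, enveloping algebras), while the paper's citation of its own spanning lemma is characteristic-free and is precisely what would be needed for the conjecture beyond characteristic zero. One small correction: the invertibility of $\bar t_j$ in $S$ is automatic (the $t_j$ are units of $\mathbb F\langle n\rangle$ independently of $\mathbf a$); the hypothesis that the coordinates of $\mathbf a$ are nonzero is used earlier, in order for $\pmb\omega^{\lambda,\mathbf a}$, $\ev_{\mathbf a}$, and hence $W_\mathbb F(\pmb\omega^{\lambda,\mathbf a})$ to be defined at all, not in the cyclicity step.
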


\proof  
Let $w\in W_\mathbb F(\pmb\omega^{\lambda,\mathbf a})_\lambda\setminus\{0\}$ and use the symbol $w_{\mathbf a}$ to denote $w$ when regarded as an element of $\varphi_{\mathbf a}^*(W_\mathbb F(\pmb\omega^{\lambda,\mathbf a})$. We will see in Section \ref{mainproofs}, in the proof of Theorem \ref{mainthm2}, that we have $W_\mathbb F(\pmb\omega^{\lambda,\mathbf a})=U_\mathbb F(\lie g[n])w$.  Since  $\varphi_{\mathbf a}$ is an automorphism of $U_\mathbb F (\lie g [n])$, it follows that $\varphi_{\mathbf a}^* (W_\mathbb F (\pmb\omega^{\lambda,\mathbf a}))=U_\mathbb F(\lie g[n])w_{\mathbf a}$.  Thus, we need to show that $w_{\mathbf a}$ satisfies the defining relations of $W_\mathbb F^n(\lambda)$. Since $\varphi_a$ fixes every element of $U_\mathbb F(\lie g)$, $w_{\mathbf a}$ is a vector of weight $\lambda$ annihilated by $(x_{\alpha}^-)^{(k)}$ for all $\alpha\in R^+, k>\lambda(h_\alpha)$. Further, as $\varphi_{\mathbf a}$ maps $U_\mathbb F (\lie n^+[n])$ to itself, we obtain $U_\mathbb F (\lie n^+[n])^0 w_{\mathbf a} = 0$. Therefore, it remains to show that
$U_\mathbb F(\lie h[n]_0)^0 w_{\mathbf a}=0$
which suffices to show that 
$h_i\otimes b w_{\mathbf a}=0$
for all $i\in I$ and $b\in\mathbb B[n]$, $b\ne 1$, since $U_\mathbb F(\lie h[n])$ is generated by $\{h_i\otimes b \mid \text{ for all } i\in I \text{ and } b\in\mathbb B[n]\}$ when $\rm{char} \mathbb F = 0$.
In order to do this, suppose that $b=t_1^{r_1}\dots t_n^{r_n}$. Then,  
\begin{eqnarray*}
h_i\otimes t_1^{r_1}\dots t_n^{r_n} w_{\mathbf a} &=&\varphi_{\mathbf a}^* (h_i \otimes t_1^{r_1}\dots t_n^{r_n} ) w_{\mathbf a} \\ & = & h_i \otimes (t_1-a_1)^{r_1}\dots (t_n-a_n)^{r_n} w_{\mathbf a} \\
&=& \sum_{j_1=0}^{r_1} \dots \sum_{j_n=0}^{r_n}  \binom{r_1}{j_1} \dots \binom{r_n}{j_n}  h_i \otimes (-a_1)^{j_1}\dots (-a_n)^{j_n} t_1^{r_1-j_1}\dots t_n^{r_n-j_n} w_{\mathbf a}\\
&=& \lambda(h_i)  a_1^{r_1}\dots a_n^{r_n} \sum_{j_1=0}^{r_1} \dots \sum_{j_n=0}^{r_n}  \binom{r_1}{j_1} \dots \binom{r_n}{j_n}(-1)^{j_1+\dots+ j_n}  w_{\mathbf a}
\\
&=&0
\end{eqnarray*}
where we verify that the sum in the last equation is zero by induction on $n$ and  the second to last equation follows from the fact that the irreducible quotient of $W_\mathbb F(\pmb\omega^{\lambda,\mathbf a})$ is the evaluation module with evaluation parameter $\mathbf a\in\mathbb F$ (cf. \cite[Corollary 6.1]{nss} and \cite{PB,L}), i.e,
$h_i\otimes t_1^{r_1}\dots t_n^{r_n} w=  h_i \otimes a_1^{r_1}\dots a_n^{r_n} w = a_1^{r_n}\dots a_n^{r_n} \lambda(h_i)  w$.

\endproof

\section{Main proofs}
\label{mainproofs}

\subsection{Proof of Theorem \ref{mainthm2}(1)}\label{mainthm2proof}

Set $\lambda=\wt(\pmb\omega)$. Let $v$ be a highest-$\ell$-weight vector of $W_\mathbb F^n(\pmb\omega)$. First of all, it is clear that $W_\mathbb F^n(\pmb\omega)$ is spanned by all elements of the form $\left(x_{\beta_1,\mathbf s_1}^-\right)^{(k_1)}\cdots\left(x_{\beta_m,\mathbf s_m}^-\right)^{(k_m)}v$, for $m,k_1,\ldots k_m\in \mathbb Z_+$, $\beta_1,\ldots,\beta_m\in R^+$, and $\mathbf s_1,\ldots, \mathbf s_m\in \mathbb Z^n$. In order to prove the statement, it suffices to prove that $W_\mathbb F^n(\pmb\omega)$ is spanned by the elements
$$\left(x_{\beta_1,\mathbf s_1}^-\right)^{(k_1)}\cdots\left(x_{\beta_m,\mathbf s_m}^-\right)^{(k_m)}v,$$
with $\mathbf s_1\ldots,\mathbf s_m\in \mathbb Z_+^n$ such that $\max(\mathbf s_j)<\lambda(h_{\beta_j})$ for all $j\in\{1,\ldots,m\}$ and $\sum_{j=1}^m k_j\beta_j\le \lambda-w_0\lambda$. Notice that this last condition is immediate from part (4) of Theorem \ref{mainthm2}.

Let $\cal R_\lambda = R^+\times \mathbb Z^n \times\mathbb Z_+$ and $\Xi$ be the set of functions $\xi:\mathbb N\to \cal R_\lambda$ given by $j\mapsto \xi_j=(\beta_j,\mathbf s_j,k_j)$ such that $k_j=0$ for all $j$ sufficiently large. Let $\Xi'$ be the subset of $\Xi$ consisting of the elements $\xi$ such that $0\le \min(\mathbf s_j)\le \max(\mathbf s_j)<\lambda(h_{\beta_j})$ for all $j$.  
	
Given $\xi\in\Xi$ we associate an element $v_\xi\in W_\mathbb F^n(\pmb\omega)$ as follows 
\begin{equation}\label{basicelem}
v_\xi:=\left(x_{\beta_1,\mathbf s_1}^-\right)^{(k_1)}\cdots\left(x_{\beta_m, \mathbf s_m}^-\right)^{(k_m)}v.
\end{equation}

We denote by $\mathcal S$  the $\mathbb Z$-span of vectors associated to elements in $\Xi'$.  Define the degree of $\xi$ to be $d(\xi):= \sum_j k_j$ and the maximal exponent of $\xi$ to be $e(\xi):= \max\{k_j\}$. Notice that $e(\xi)\le d(\xi)$ and $d(\xi)\ne 0$ implies $e(\xi)\ne 0$. Since there is nothing to be proved when $d(\xi)=0$ we assume from now on that  $d(\xi)>0$. Set

\begin{equation*}
\Xi_{d,e} = \{\phi\in \Xi: d(\phi)=d \text{ and } e(\phi)=e\} \qquad\text{and}\qquad \Xi_d=\bigcup\limits_{1\le e\le d} \Xi_{d,e}.
\end{equation*}
We prove by induction on $d$ and sub-induction on $e$ that, if $\xi\in\Xi_{d,e}$ is such that there exists $j$ with $\min(\mathbf s_j)<0$ or $\max(\mathbf s_{j})\ge\lambda(h_{\beta_j})$, then $v_\xi$ is in the span of vectors associated to elements in $\Xi'$. More precisely, given $0<e\le d\in\mathbb N$, we assume, by induction hypothesis, that this statement is true for every $\phi$ which belongs either to $\Xi_{d,e'}$ with $e'<e$ or to $\Xi_{d'}$ with $d'<d$. The proof is split in two cases according to whether $e=d$ or $e<d$.

First, observe that \eqref{basicrel} implies
\begin{equation}\label{basicrelv}
\left((X_{\beta;-\mathbf r,\mathbf{r+e_i}}^-(u))^{(k-\ell)}\Lambda_{\beta,\mathbf{e_i}}(u)\right)_kv = 0 \qquad \forall\ \mathbf r \in\mathbb Z^n,\beta\in R^+, k,\ell\in\mathbb Z, k>\lambda(h_\beta), 1\le \ell\le k,
\end{equation}
where $\mathbf{e_i}$ is $i^{th}$-canonical element in $\mathbb Z^n$.

\begin{itemize}
\item \textbf{Case $e=d$.} In this case, it follows that $v_\xi= (x_{\beta,\mathbf s}^-)^{(e)}v$ for some $\beta\in R^+$ and $\mathbf s\in\mathbb Z^n$.  

Let $l=e\lambda(h_{\beta})$ and $k=l+e$ in \eqref{basicrelv} to obtain
\begin{equation}\label{basicrelv2}
\sum_{n=0}^{\lambda(h_{\beta})} (x_{\beta,\mathbf r+(n+1)\mathbf{e_i}}^-)^{(e)}\Lambda_{\beta,\mathbf{e_i},l-en}v +  \substack{\text{ other terms in the span of vectors } \\ v_{\phi'} \text{ with } \phi'\in \Xi_{e,e'} \text{ for } e'<e} = 0.
\end{equation}
Suppose that $\mathbf s=(s_1,\dots,s_n)$. For each $i\in\{1,\dots, n\}$ we consider the cases $s_i\ge \lambda(h_{\beta})$ and $s_i<0$ separately and prove the statement by a further induction on $s_i$ and $|s_i|$, respectively, with a suitable choice of $\mathbf r$ in the equation above: 
\begin{itemize}
    \item[i)] if $s_i\ge \lambda(h_{\beta})$, we deal with \eqref{basicrelv2}  with $\mathbf r= \mathbf s-(\lambda(h_{\beta})+1)\mathbf{e_i}$;
    
    \item[ii)] if $s_i<0$, observe that  $n_0=\min\{j\in\mathbb Z_+\mid j\le \lambda(h_\beta) \text{ and } \Lambda_{\beta,\ell-ej}v\ne 0\}$ is well defined once we have  $\Lambda_{\beta,\ell-e\lambda(h_{\beta})}v=v\ne 0$, then we deal with \eqref{basicrelv2} starting from $n=n_0$ and  
    $\mathbf r= \mathbf s-(n_0+1)\mathbf{e_i}$.
\end{itemize}

\ 

\item Case $e<d$. We can assume, by the induction hypothesis, that $0<\min(\mathbf{s_j})\le \max(\mathbf{s_j})<\lambda(h_{\beta_j})$ for $j>1$ in \eqref{basicelem}. Now, an application of Lemma \ref{commutrels}\eqref{commutrels3} completes the inductive argument. \hfill \qedsymbol
\end{itemize}

\subsection{Proof of Theorem \ref{mainthm}(1).} \label{main}

We begin with a technical property of graded local Weyl $U_\mathbb F(\lie{sl}_2[n])$-modules and a technical property of certain finite--dimensional $U_\mathbb F(\lie g[n])$-modules. These properties are established for graded local Weyl $U_\mathbb F(\lie{sl}_2[n])$-modules and then extended to suitably chosen subalgebras of $U_\mathbb F(\lie g[n])$.

\begin{lem} \label{auxlem}
Suppose  the characteristic of $\mathbb F$ is not 2.\
\begin{enumerate}[(a)]

\item \label{a} If $m \in \mathbb Z_+$, $v$ is a nonzero vector of weight $m\omega$ in the local Weyl $U_\mathbb F (\lie{sl}_2 [n])$-module $W_\mathbb F(m)$, and $\mathbf a\in\mathbb{Z}_+^n$ with $\max\textbf a\geq m$, then $\left(x^-\otimes t^{\mathbf a}\right)^{(k)}v = 0$, for all  $k \in \mathbb N$. 
       
\item \label{b}  If $V$ is a finite-dimensional $U_\mathbb F(\lie g[n])$-module, $\lambda \in P^+$, and $v \in V_\lambda$ is such that
		\[
		U_\mathbb F (\lie n^+[n])^0 v = U_\mathbb F (\lie h[n]_0)^0 v = 0,
		\]
then $(x_\alpha^-\otimes t^{\mathbf b})^{(k)}v = 0$ for all $\alpha \in R^+$, $k \in \mathbb N$, and $\mathbf b\in\mathbb{Z}_+$ with $\max\mathbf b\geq\lambda(h_\alpha)$.

\end{enumerate}
\end{lem}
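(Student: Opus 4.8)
The plan is to prove part \eqref{a} first, directly inside $U_\mathbb F(\lie{sl}_2[n])$, and then bootstrap part \eqref{b} from it by the standard $\lie{sl}_2$-reduction. For \eqref{a}, write $l=\max\mathbf a\geq m$ and pick a coordinate $i$ with $a_i=l$. The idea is to imitate the argument used in the proof of Proposition \ref{fdprop}(ii)--(iii) and in the proof of Theorem \ref{mainthm2}(1): apply Lemma \ref{basicrel} to a product $(x^+\otimes c)^{(r)}(x^-\otimes t^{\mathbf a})^{(s)}$ for suitable $c\in\mathbb B[n]$ and extract, as the coefficient of the appropriate power of $u$, a relation that forces $(x^-\otimes t^{\mathbf a})^{(k)}v$ into the span of ``lower'' vectors. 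Concretely, since $v$ has weight $m\omega$ and $U_\mathbb F(\lie n^+[n])^0 v = U_\mathbb F(\lie h[n]_0)^0 v = 0$ in $W_\mathbb F(m)$ (the $\lie{sl}_2$ graded local Weyl module), the relevant $\Lambda$-type coefficients either vanish (degree $>m$) or act as scalars determined by $\lambda(h)=m$; setting $c = t_i$ and choosing the exponents $r,s$ so that $s-r$ and the degrees line up, the straightening identity \eqref{basicrel}(1.2) collapses to an expression in which the top term is a nonzero multiple of $(x^-\otimes t^{\mathbf a})^{(k)}v$ (using that the characteristic is not $2$ so the relevant binomial/multiplicity coefficient is invertible, which is exactly where the hypothesis $\operatorname{char}\mathbb F\neq 2$ enters) and the remaining terms have strictly smaller degree or strictly smaller maximal exponent. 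A downward induction on the degree $k$ and a sub-induction on the maximal exponent, exactly as organized in Subsection \ref{mainthm2proof}, then closes the argument; the base case $k$ large is trivial by finite-dimensionality of the weight spaces.

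For part \eqref{b}, the plan is to reduce to \eqref{a}. Given $\alpha\in R^+$, recall from the proof of Proposition \ref{fdprop}(i) that the elements $(x^\pm_{\alpha}\otimes b)^{(l)}$ generate a subalgebra of $U_\mathbb F(\lie g[n])$ isomorphic to $U_\mathbb F(\lie{sl}_2[n])$; call it $U_\mathbb F(\lie{sl}_2^{\alpha}[n])$. The vector $v$ satisfies $U_\mathbb F(\lie n^+[n])^0 v = 0$, so in particular $(x^+_\alpha\otimes b)^{(l)}v=0$ for all $b\in\mathbb B[n]$, $l>0$; and $U_\mathbb F(\lie h[n]_0)^0 v=0$ gives $(h_\alpha\otimes b)v=0$ for $b\in\mathbb B[n]$, $b\neq 1$, while $(h_\alpha\otimes 1)v = \lambda(h_\alpha)v$. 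Hence the $U_\mathbb F(\lie{sl}_2^{\alpha}[n])$-submodule $U_\mathbb F(\lie{sl}_2^{\alpha}[n])v$ is a quotient of the $\lie{sl}_2$ graded local Weyl module $W_\mathbb F(\lambda(h_\alpha))$ (by Theorem \ref{mainthm}(2) applied to $\lie{sl}_2$, or directly by comparing defining relations), and $v$ maps to a nonzero weight vector of weight $\lambda(h_\alpha)\omega$. Applying \eqref{a} with $m=\lambda(h_\alpha)$ and the same exponent vector $\mathbf b$ (which has $\max\mathbf b\geq \lambda(h_\alpha)=m$), we get $(x^-_\alpha\otimes t^{\mathbf b})^{(k)}v=0$ in the submodule, hence in $V$.

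The main obstacle I expect is the combinatorial bookkeeping in \eqref{a}: making the straightening identity \eqref{basicrel} yield, after taking the correct coefficient of $u$, a clean recursion whose ``error terms'' genuinely lie in the inductively controlled set (strictly smaller degree, or same degree but strictly smaller maximal exponent, with exponent vectors one can eventually push into $\{0\le\min\le\max<m\}$). This is the same mechanism that drives Subsection \ref{mainthm2proof}, so the template is available, but one has to verify that the coefficient multiplying the top term $(x^-\otimes t^{\mathbf a})^{(k)}v$ is a unit in $\mathbb F$ — this is precisely why $\operatorname{char}\mathbb F\neq 2$ is assumed, and checking that no other denominators intrude (i.e. that all other coefficients arising are integers) is the delicate point. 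Everything else — the reduction in \eqref{b}, the base cases, the use of Lemma \ref{commutrels} to reorder factors — is routine.
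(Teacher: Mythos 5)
Your reduction in part \eqref{b} is exactly the paper's argument and is fine, but your plan for part \eqref{a} has genuine gaps. First, the mechanism is set up in the wrong direction: if you apply Lemma \ref{basicrel} to $(x^+\otimes t_i)^{(r)}(x^-\otimes t^{\mathbf a})^{(s)}$, the right-hand side involves the series $X(u)=\sum_m (x^-\otimes t_i^m\, t^{(m+1)\mathbf a})u^{m+1}$, whose terms have exponents \emph{larger} than $\mathbf a$, and there is no identified reason for the left-hand side to vanish on $v$ (that vanishing is what you are trying to prove). The paper's trick is the opposite pairing: use the defining relation $(x^-\otimes 1)^{(N)}v=0$ for $N>m$ and hit $(x^-\otimes 1)^{(ka_i+k)}v$ with $(x^+\otimes t_i)^{(ka_i)}$; extracting the coefficient of $u^{ka_i+k}$ in $X_{t_i,1}(u)^{(k)}$ isolates $(x^-\otimes t_i^{a_i})^{(k)}v$ plus products $\prod_j(x^-\otimes t_i^{j})^{(k_j)}v$ with all $k_j<k$, which are killed by induction on $k$. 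Second, your proposed base case (``$k$ large is trivial by finite-dimensionality of the weight spaces'') is circular: at this point the graded local Weyl module $W_\mathbb F(m)$ is not known to be finite-dimensional -- Lemma \ref{auxlem} is precisely the input needed to prove Theorem \ref{mainthm}(1), and a priori its weight spaces could be infinite-dimensional.

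Third, your sketch never treats monomials $t^{\mathbf a}$ with more than one nonzero exponent, and this is exactly where the hypothesis $\operatorname{char}\mathbb F\neq 2$ enters, not through invertibility of a binomial coefficient in the straightening identity (all coefficients there are integral). The paper first settles the case where only one coordinate is nonzero (and $\geq m$), and then writes $x^-\otimes t_1^{a_1}\cdots t_n^{a_n}=-\tfrac12\bigl[h\otimes t_2^{a_2}\cdots t_n^{a_n},\,x^-\otimes t_1^{a_1}\bigr]$, using that both $h\otimes t_2^{a_2}\cdots t_n^{a_n}$ (by $U_\mathbb F(\lie h[n]_0)^0v=0$) and $x^-\otimes t_1^{a_1}$ (by the first case) annihilate $v$; the factor $-\tfrac12$ is the only place the characteristic restriction is used. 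Without this reduction step and with the recursion and base case as you describe them, the argument for \eqref{a} does not close.
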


\proof
\begin{enumerate}[(a)]
\item We will first consider the case where one of the $a_i\geq m$ and all of the others are $0$. Without loss of generality we are assuming now that $a_2=\dots=a_n=0$ and $a_1\geq m$. To prove this case we will proceed by induction on $k$. When $k=1$, using Lemma \ref{basicrel}, we obtain
\begin{align*}
    0&=(-1)^{a_1}\left(x^+\otimes t_1\right)^{(a_1)} (x^-\otimes1)^{(a_1+1)}v\\
    &=\left(X_{t_1,1}(u)\right)_{a_1+1}v\\
    &=\left(\sum_{j\ge0}\left(x^- \otimes t_1^j\right)u^{j+1}\right)_{a_1+1}v\\
    &=\left(x^-\otimes t_1^{a_1}\right)v.
\end{align*}

If $k>1$, by using Lemma \ref{basicrel} again, we get
\begin{align}\label{eq2}
0&=(-1)^{ka_1}\left(x^+\otimes t_1\right)^{(ka_1)}(x^- \otimes1)^{(ka_1+k)}v \nonumber\\
&=\left(\left(X_{t_1,1}(u)^{(k)}\right)\right)_{ka_1+k}v\\
&=\left( \left(\sum_{j\ge0}\left(x^- \otimes t_1^{j}\right)u^{j+1}\right)^{(k)}\right)_{ka_1+k}v. \nonumber
\end{align}
Given $l\in\mathbb N$ and ${\bf k}=\left(k_0,k_1,\dots,k_l\right)\in\mathbb Z_+^{l+1}$ set 
$X_{\bf k}=\prod_{j=0}^l\left(x^-\otimes t_1^j\right)^{(k_j)}.$
From \eqref{eq2}, we get
$\left(x^-\otimes t_1^{a_1}\right)^{(k)}v=-\sum_{\bf{k}}X_{\bf{k}}v,$
where the sum on the right side is over ${\bf k}\in\mathbb Z^{l+1}_+$ such that $\sum_{j=0}^lk_j=k$, $\sum_{j=0}^ljk_j=a_1k$, and $k_j>0$ for more than one $j\in\{0,\ldots,l\}$. In particular, we have $l\geq a_1$ and $k_j<k$ for all $j\in\{0,\dots,l\}$. Hence, by the induction hypothesis, $\sum_{\bf k}X_{\mathbf k}v=0$. Therefore, $\left(x^-\otimes t_1^{a_1}\right)^{(k)}v=0$ for all $k\in\mathbb N$ if $a_1\geq m$.
 
Now for the case of $a_1,\ldots,a_n\in\mathbb Z_+$ with $a_i\geq m$ for some $i\in\{1,\ldots,n\}$ and at least one $a_j\neq 0$ for $i\neq j$. Without loss of generality we may assume that $a_1\geq m$. Then,
\begin{eqnarray*}
\left(x^-\otimes t_1^{a_1}\dots t_n^{a_n}\right)^{(k)}v
&=&\left(-\frac{1}{2}\left[h\otimes t_2^{a_2}\dots t_n^{a_n},x^-\otimes t_1^{a_1}\right]\right)^{(k)}v
\\
&=&\left(-\frac{1}{2}\right)^k\left(\left(h\otimes t_2^{a_2}\dots t_n^{a_n}\right)\left(x^-\otimes t_1^{a_1}\right)-\left(x^-\otimes t_1^{a_1}\right)\left(h\otimes t_2^{a_2}\dots t_n^{a_n}\right)\right)^{(k)}v
\\
&=&0,
\end{eqnarray*}
because $\left(x^-\otimes t_1^{a_1}\right)v=0$, by the previous case, and $\left(h\otimes t_2^{a_2}\dots t_n^{a_n}\right)v=0$, by the definition of the local Weyl module.

\item The set $\left\{\left(x_\alpha^\pm \otimes f\right)^{(k)}\ \big|\  k\in \mathbb Z_+,\ f\in\mathbb{C}[n]\right\}$ generates a subalgebra $U_\mathbb F\left(\lie{sl}_\alpha[n]\right) \subseteq U_\mathbb F(\lie g[n])$ isomorphic to $U_\mathbb F(\lie {sl}_2[n])$. The vanishing of $\left(x^-_\alpha \otimes t_1^{b_1}\dots t_n^{b_n}\right)^{(k)}v$, for all $b_1,\ldots,b_n\in\mathbb Z_+$ with $b_i\geq\lambda(h_\alpha)$ for some $i\in\{1,\ldots,n\}$ and $k \in \mathbb N$, follows from $(a)$ and the fact that $v$ generates a finite-dimensional highest-weight module for $U_\mathbb F\left(\lie{sl}_\alpha[n]\right)$, which is isomorphic to a quotient of the graded local Weyl $U_\mathbb F(\lie {sl}_2[n])$-module $W_\mathbb F(\lambda(h_\alpha))$.
\end{enumerate}
\endproof

We now use Lemma \ref{auxlem} to prove Theorem \ref{mainthm}(1):

\

\noindent
We proceed by using the notation and construction used in the proof of Theorem \ref{mainthm2}(1) \textit{mutatis mutandis}. Let $v$ be a highest-weight vector of $W_\mathbb F^n(\lambda)$. In this context, it suffices to prove that $W_\mathbb F^n(\lambda)$ is spanned by the elements
$$\left(x_{\beta_1,\mathbf s_1}^-\right)^{(k_1)}\cdots\left(x_{\beta_m,\mathbf s_m}^-\right)^{(k_m)}v,$$
with $\mathbf s_1\ldots,\mathbf s_m\in \mathbb Z_+^n$ such that $\max(\mathbf s_j)<\lambda(h_{\beta_j})$ for all $j\in\{1,\ldots,m\}$ and $\sum_{j=1}^m k_j\beta_j\le \lambda-w_0\lambda$. Notice that this last condition is immediate from Theorem \ref{mainthm}(2). 

Let $\cal R_\lambda = R^+\times \mathbb Z_+^n \times\mathbb Z_+$ and consider $\Xi$, $\Xi'$, $v_\xi$, $\mathcal S$, $d(\xi)$, $e(\xi)$, and $\Xi_{k}$ as in the proof of Theorem \ref{mainthm2}(1). Given $\xi \in \Xi$, once again we can assume that $d(\xi)>0$ and we prove, by induction on $d\in\mathbb{N}$, that if $\xi\in\Xi_d$ and there exists $j\in\mathbb{N}$ with $\max(\mathbf s_j) \ge \lambda(h_{\beta_{j}})$, then $v_\xi\in \mathcal S$.
	
Let $\xi \in \Xi_d$. If $d=1$, it follows that $v_\xi=\left(x_{\alpha,\mathbf s}^-\right) v$ for some $\alpha\in R^+$ and $\mathbf s \in\mathbb Z_+^n$. In this case, from Lemma \ref{auxlem}\eqref{b}, we conclude that $v_\xi=0 \in\mathcal S$ whenever $\max(\mathbf s) \ge \lambda(h_\alpha)$ or $\alpha >\lambda -w_0\lambda$. On the other hand, if $\max(\mathbf s) < \lambda(h_\alpha)$ and $\alpha\le\lambda - w_0\lambda$, then $v_\xi=\left(x_{\alpha,\mathbf s}^-\right)v\in \mathcal S$ by the definition of $\mathcal S$.
	
Now, let $d\in\mathbb N$ and suppose the result is true for all $\xi \in\Xi_{d'}$ with $d' < d$. We split the argument in two case: $e(\xi)=d(\xi)=d$ or $e(\xi)<d(\xi)=d$. When $e(\xi)=d(\xi)=d$, it follows that $v_\xi=\left(x_{\alpha,\mathbf{k}}^-\right)^{(d)}v$ for some $\alpha\in R^+$ and $\mathbf{k}\in\mathbb Z_+^n$. If $\max{\mathbf k}< \lambda(h_\alpha)$, we have $v_\xi\in \mathcal S$ by the definition of $\mathcal S$. If $\max{\mathbf k}\ge \lambda(h_\alpha)$, then $v_\xi\in \mathcal S$ because $v_\xi=0$ by Lemma \ref{auxlem}\eqref{b} again. Finally, for the case $e(\xi)<d(\xi)=d$ we can assume, by the induction hypothesis, that $\max(\mathbf{s_j})<\lambda(h_{\beta_j})$ for $j>1$. Now, applying Lemmas \ref{commutrels}\eqref{commutrels3} and \ref{auxlem}\eqref{b} we complete the inductive argument.\hfill \qedsymbol

\vfill


\begin{thebibliography}{99}

\bibitem{sam2} I. Bagci, S. Chamberlin, \textit{Integral bases for the universal enveloping algebras of map superalgebras}, J. Pure Appl. Algebra \textbf{218 (8)} (2014), 1563--1576.

\bibitem{PB} P.~Batra, T. ~Pal \textit{Representations of Graded Multiloop Lie Algebras}, Communications in Algebra, \textbf{38 (1)} (2009), 49--67.

\bibitem{bmm} A. Bianchi, T. Macedo, A. Moura, \textit{On Demazure and local Weyl modules for affine hyperalgebras}, Pacific J. Math. \textbf{274 (2)} (2015), 257-–303.

\bibitem{bm} A. Bianchi, A. Moura, \textit{Finite-dimensional representations of twisted hyper-loop algebras}, Comm. Algebra \textbf{42 (7)} (2014), 3147--3182.

\bibitem{sam1} S. Chamberlin, \textit{Integral bases for the  universal enveloping algebras of map algebras}, J. Algebra \textbf{377 (1)} (2013), 232-–249.

\bibitem{C} V. Chari, \textit{Representations of Affine and Toroidal Lie Algebras}, Fields Institute Communications, in: Geometric representation theory and extended affine Lie algebras, edited by E. Neher, A. Savage, W. Wang 

\bibitem{cfk} V. Chari, G. Fourier, T. Khandai, \textit{A categorical approach to Weyl modules}, Transform. Groups \textbf{15 (3)} (2010), 517--549.

\bibitem{CG} V. Chari, J. Greenstein, \textit{Current algebras, highest weight categories and quivers}, Adv. Math. \textbf{216 (2)} (2007), 811--840.

\bibitem{CL} V. Chari, S. Loktev, \textit{Weyl, Demazure and fusion modules for the current algebra}, Adv. Math. \textbf{207} (2006), 928--960.

\bibitem{CPweyl} V. Chari,  A. Pressley, \textit{Weyl modules for classical and quantum affine algebras}, Represent. Theory \textbf{5} (2001), 191--223.

\bibitem{FL} B. Feigin, S. Loktev, \textit{Multi-dimensional Weyl modules and symmetric functions}, Comm. Math. Phys. \textbf{251} (2004), 427–-445.

\bibitem{fkks} G. Fourier, T. Khandai, D. Kus, A. Savage, \textit{Local Weyl modules for equivariant map algebras with free abelian group actions}, J. Algebra \textbf{350 (1)} (2012), 386--404.

 \bibitem{fl07} G. Fourier, P. Littelmann, \textit{Weyl modules, Demazure modules, KR-modules, crystals, fusion products and limit constructions}, Adv. Math. \textbf{211} (2007), 566--593.
 
 
\bibitem{G} H. Garland, \textit{The arithmetic theory of loop algebras}, J. Algebra \textbf{53 (2)} (1978), 480--551.

\bibitem{H} J.E. Humphreys, Introduction to Lie Algebras and Representation Theory, Grad. Texts Math. \textbf{9} (1978).

\bibitem{JM1} D. Jakeli\'c, A. Moura, \textit{Finite-dimensional representations of hyper loop algebras}, Pacific J. Math. \textbf{233 (2)} (2007), 371--402.

\bibitem{JM2} D. Jakelic, A. Moura, \textit{On Weyl modules for quantum and hyper loop algebras}, Contemp. Math. \textbf{623} (2014),
99–134.

\bibitem{JAN} J. Jantzen, Representations of Algebraic Groups, second ed., American Mathematical Society, Providence, RI (1987).

\bibitem{K2} Khandai, T., \textit{Irreducible integrable representations of toroidal Lie algebras},  

https://arxiv.org/pdf/1607.04389.pdf

\bibitem{kosagz} B. Kostant, \textit{Groups over $\mathbb Z$,  algebraic groups and discontinuous subgroups}, Proc. Symp. Pure Math. \textbf{9} (1966), 90–-98.

\bibitem{L} M. ~ Lau, \textit{Representations of Multiloop Lie algebras}, Pacific J. Math. \textbf{245 (1)} (2010), 167--184. 

\bibitem{mitz} D. Mitzman,\textit{ Integral Bases for Affine Lie Algebras and Their Universal Enveloping Algebras}, Contemp. Math. \textbf{40}, AMS, Providence, RI (1985).

\bibitem{nss} E. Neher, A. Savage, P. Senesi, \textit{Irreducible finite-dimensional representations of equivariant map algebras}, Trans. Am. Math. Soc. \textbf{364 (5)} (2012), 2619–-2646.


\bibitem{nss2} E. Neher and A. Savage, \textit{A survey of equivariant map algebras with open problems}, Contemp. Math.\textbf{ 602},
Amer. Math. Soc., Providence, RI, (2013).

\bibitem{R} S. E.~Rao, \textit{On representations of toroidal Lie algebras. } Functional analysis VIII, Various Publ. Ser. \textbf{47}, Aarhus Univ. (2004).  

\end{thebibliography}
\end{document}